    \newcommand{\BA}{{\mathbb {A}}} 
    \newcommand{\BC}{{\mathbb {C}}} 
    \newcommand{\BG}{{\mathbb {G}}}
    \newcommand{\CA}{{\mathcal {A}}} 
    \newcommand{\CE}{{\mathcal {E}}} \newcommand{\CF}{{\mathcal {F}}}
     \newcommand{\CH}{{\mathcal {H}}}
    \newcommand{\CI}{{\mathcal {I}}} \newcommand{\CJ}{{\mathcal {J}}}
     \newcommand{\CP}{{\mathcal {P}}}
     \newcommand{\CZ}{{\mathcal {Z}}}
     \newcommand{\RD}{{\mathrm {D}}}
    \newcommand{\Ad}{{\mathrm{Ad}}}
     \newcommand{\GL}{{\mathrm{GL}}}
    \newcommand{\Hom}{{\mathrm{Hom}}}
    \newcommand{\Ind}{{\mathrm{Ind}}}\newcommand{\ind}{{\mathrm{ind}}}
    \newcommand{\Mat}{{\mathrm{Mat}}}
    \newcommand{\PGL}{{\mathrm{PGL}}} 
     \newcommand{\PD}{{\mathrm{PD}}}
    \renewcommand{\Re}{{\mathrm{Re}}}
    \newcommand{\Res}{{\mathrm{Res}}}
    \newcommand{\SO}{{\mathrm{SO}}}\newcommand{\Sp}{{\mathrm{Sp}}}
    \newcommand{\tr}{{\mathrm{tr}}}
    \newcommand{\Span}{{\mathrm{Span}}}
    \newcommand{\diag}{\mathrm{diag}}
\renewcommand{\Re}{{\mathrm{Re}}}
\newcommand{\ud}{\,\mathrm{d}}
\newcommand{\ol}{\overline}
\newcommand{\lra}{\longrightarrow}
\newcommand{\lto}{\longmapsto}
\newcommand{\bs}{\backslash}
\def\bks{{\backslash}}
\def\diag{{\rm diag}}
\def\eps{{\epsilon}}
\def\lam{{\lambda}}
\def\sig{{\sigma}}
    \newcommand{\wt}{\widetilde}
    \newcommand{\pair}[1]{\langle {#1} \rangle}
    \newcommand{\sk}{\medskip}
    \newcommand{\s}{\sk\noindent}
    \theoremstyle{plain}
    \newtheorem{thm}{Theorem}[section] \newtheorem{cor}[thm]{Corollary} 
    \newtheorem{lem}[thm]{Lemma}  \newtheorem{prop}[thm]{Proposition} \newtheorem{rmk}[thm]{Remark}
      \newtheorem{defn-prop}[thm]{Definition-Proposition}
    \numberwithin{equation}{section}
\def\legendre@dash#1#2{\hb@xt@#1{%
  \kern-#2\p@
  \cleaders\hbox{\kern.5\p@
    \vrule\@height.2\p@\@depth.2\p@\@width\p@
    \kern.5\p@}\hfil
  \kern-#2\p@
  }}
\def\@legendre#1#2#3#4#5{\mathopen{}\left(
  \sbox\z@{$\genfrac{}{}{0pt}{#1}{#3#4}{#3#5}$}%
  \dimen@=\wd\z@
  \kern-\p@\vcenter{\box0}\kern-\dimen@\vcenter{\legendre@dash\dimen@{#2}}\kern-\p@
  \right)\mathclose{}}
\newcommand\legendre[2]{\mathchoice
  {\@legendre{0}{1}{}{#1}{#2}}
  {\@legendre{1}{.5}{\vphantom{1}}{#1}{#2}}
  {\@legendre{2}{0}{\vphantom{1}}{#1}{#2}}
  {\@legendre{3}{0}{\vphantom{1}}{#1}{#2}}
}
\def\dlegendre{\@legendre{0}{1}{}}
\def\tlegendre{\@legendre{1}{0.5}{\vphantom{1}}}
\begin{document}

\title[Jacquet-Langlands and Twisted Descent]{The Jacquet-Langlands Correspondence via Twisted Descent}

\author{Dihua Jiang}
\address{School of Mathematics, University of Minnesota, Minneapolis, MN 55455, USA}
\email{dhjiang@math.umn.edu}

\author{Baiying Liu}
\address{School of Mathematics, Institute for Advanced Study, Einstein Drive, Princeton, NJ 08540, USA}
\email{liu@ias.edu}

\author{Bin Xu}
\address{Department of Mathematics, Sichuan University, Chengdu, 610064, People's Republic of China}
\email{binxu@scu.edu.cn}

\author{Lei Zhang}
\address{Department of Mathematics, National University of Singapore, Singapore 119076}
\email{matzhlei@nus.edu.sg}

\date{\today}

\subjclass[2000]{Primary 11F70, 22E55; Secondary 11F30.}

\thanks{The work of the first named author is supported in part by the NSF Grant DMS--1301567, that of the second
named author is supported in part by
NSF Grant DMS-1302122 and by a postdoc research fund from Department of Mathematics, University of Utah, that of the third named author is
supported in part by National Natural Science Foundation of China (No.11501382), and by the National Key Basic Research Program of China (No. 2013CB834202), and that of the fourth named author is
supported in part by the National University of Singapore’s start up grant}

\keywords{Jacquet-Langlands correspondence, Automorphic Forms, Langlands Functorial Transfers, Classical Groups}

\begin{abstract}
The existence of the well-known Jacquet-Langlands correspondence
was established by Jacquet and Langlands via the trace formula method in 1970 (\cite{J-L}).
An explicit construction of such a correspondence was obtained by Shimizu via theta series in 1972 (\cite{Sh72}).
In this paper, we extend the automorphic descent method of Ginzburg-Rallis-Soudry (\cite{GRS11}) to a new setting.
As a consequence, we recover the classical Jacquet-Langlands correspondence for $\PGL(2)$ via a new explicit construction.
\end{abstract}

\maketitle


\section{Introduction}
The classical Jacquet-Langlands correspondence between automorphic forms on $\GL(2)$ and $\RD^\times$, with a quaternion division algebra
$\RD$, is one of the first established instances of Langlands functorial transfers. The existence of such a correspondence was
established by Jacquet and Langlands via the trace formula method in 1970 (\cite{J-L}).
An explicit construction of the Jacquet-Langlands correspondence was obtained by Shimizu via theta series in 1972 (\cite{Sh72}).
Shimizu's construction was extended in the general framework of theta correspondences for reductive dual
pairs in the sense of Howe (\cite{H79}), and has been important to many arithmetic applications, including the famous Waldspurger formula for
the central value of the standard $L$-function of $\GL(2)$ (\cite{W85a}),
and the work of Harris and Kudla on Jacquet's conjecture for the central value of
the triple product $L$-function (\cite{HK04}), for instance.

The automorphic descent method developed by Ginzburg, Rallis and Soudry in \cite{GRS11} constructs a map which is backward to the
Langlands functorial transfer from quasi-split classical groups to the general linear group. The starting point of their method is the symmetry of
the irreducible cuspidal automorphic representation on $\GL(n)$. However, their theory is not able to cover the classical groups which are
not quasi-split. In this paper we extend their method by considering more invariance properties of irreducible cuspidal automorphic representations
of $\GL(n)$, so that this extended descent is able to reach certain classical groups which are not quasi-split. Due to the nature of
the newly introduced invariance property and the new setting of the construction, we call the method introduced below
{\it twisted automorphic descent}.

Let $\tau$ be an irreducible unitary cuspidal automorphic representation of $\GL_{2n}(\BA)$, where $\BA$ is the ring of adeles of a number
field $F$. Assume that the partial exterior square $L$-function $L^S(s,\tau,\wedge^2)$ has a simple pole at $s=1$.
It is well known now that $\tau$ is the Langlands functorial transfer from an irreducible generic cuspidal automorphic representation $\pi_0$ of
the $F$-split odd special orthogonal group $G_0(\BA)=\SO_{2n+1}(\BA)$. The automorphic descent method of Ginzburg, Rallis and Soudry
(\cite{GRS11}) and the irreducibility of the descent (\cite{JngS03}) show that this $\pi_0$ is unique and can be explicitly constructed by means of
a certain Bessel-Fourier coefficients of the residual representation of $\SO_{4n}(\BA)$ with cuspidal datum $(\GL_{2n},\tau)$.

The objective of this paper is to extend the descent method of \cite{GRS11} to construct more general cuspidal representations and more
general groups which are pure inner forms of the $F$-split odd special orthogonal group $G_0(\BA)$. To this end, we
take $\sigma$ to be an irreducible cuspidal automorphic representation of
an $F$-anisotropic $\SO_2^\delta$ associated to a non-square class $\delta$ of $F^\times$, and assume that the central value $L(\frac{1}{2},\tau\times\sigma)$ is nonzero. The main idea
is to make the conditions such as $L(\frac{1}{2},\tau\times\sigma)\neq 0$ into play in the construction of more general cuspidal automorphic
representations of classical groups using the irreducible unitary cuspidal automorphic representation $\tau$ of $\GL$ and the irreducible cuspidal representation $\sigma$ of $\SO_2^{\delta}$. We refer to
\cite{JZ15} for a more general framework of such constructions, which are technically much more involved than the current case in this paper. Hence
we leave to \cite{JZ15} the detailed discussions for general construction.
We give below more detailed description of the construction and the main results of the paper.

\subsection{A residual representation of $\SO_{4n+2}^{\delta}(\BA)$}
Let $V$ be a quadratic space of dimension $4n+2$ defined over $F$
with a non-degenerate quadratic form $\pair{\cdot , \cdot }$.
We assume that the Witt index of $V$ is $2n$ with a polar decomposition
$$
V=V^+\oplus V_0\oplus V^-,
$$
where $V^+$ is a maximal isotropic subspace of $V$, and $V_0$ is an anisotropic subspace of dimension $2$.
We may take the quadratic form on $V_0$ to be associated to $\displaystyle{J_\delta=\begin{pmatrix}1&0\\0&\delta\end{pmatrix}}$,
where
$-\delta\notin {F^\times}^2$,
and the quadratic form of $V$ is taken to be associated to
$$\begin{pmatrix}
   &&w_{2n}\\
   &J_\delta&\\
   w_{2n}&&
  \end{pmatrix},
$$
where $w_r$ is the ($r\times r$)-matrix with $1$'s on its anti-diagonal and zero elsewhere.
Denote by $H^\delta=\SO_{4n+2}^\delta$ the corresponding $F$-quasisplit special even orthogonal group. We fix a maximal flag
$$
\CF :\quad 0\subset V_1^+\subset V_2^+\subset \cdots \subset V_{2n}^+=V^+
$$
in $V^+$, and choose a basis
$\{e_1, \cdots, e_{2n}\}$ of $V^+$ over $F$ such that $V_i^+=\Span\{e_1,\cdots, e_i\}$. We also let $\{e_{-1},\cdots, e_{-2n}\}$
be a basis for $V^-$, which is dual to $\{e_1, \cdots, e_{2n}\}$ in the sense that
$$
\pair{e_i,e_{-j}}=\delta_{i,j}\ \text{for}\ 1\leq i,j\leq 2n,
$$
and let $V_\ell^- = \Span\{e_{-1}, \ldots, e_{-\ell}\}$.

Let $P$ be the parabolic subgroup fixing $V^+$.
Then $P$ has a Levi decomposition $P=MU$ such that $M\simeq \GL_{2n}\times\SO^\delta_2$, where $\SO^\delta_2$ is the $F$-quasisplit
special orthogonal group of $(V_0,J_\delta)$.
Let $\tau$ be an irreducible unitary cuspidal automorphic representation of $\GL_{2n}(\BA)$, and $\sigma$ an irreducible unitary (cuspidal) representation of $\SO^\delta_2(\BA)$.  Then $\tau\otimes \sigma$
is an irreducible unitary cuspidal representation of $M(\BA)$. For $s\in \BC$ and an automorphic function
$$
\phi_{\tau\otimes \sigma}\in \CA(M(F)U(\BA)\bs H^\delta(\BA))_{\tau\otimes \sigma},
$$
following \cite[\S II.1]{MW95}, one defines
$\lambda_s \phi_{\tau\otimes \sigma}$ to be $(\lambda_s\circ m_P)\phi_{\tau\otimes \sigma}$, where $\lambda_s \in X_{M}^{H^\delta}\simeq \BC$
(see \cite[\S I.1]{MW95} for the definition of $X_M^{H^\delta}$ and the map $m_P$), and defines the corresponding Eisenstein series by
$$
E(h, s,\phi_{\tau\otimes \sigma})=\sum_{\gamma\in P(F)\bs H^\delta(F)}\lambda_s \phi_{\tau\otimes \sigma}(\gamma h),
$$
which converges absolutely for $\Re(s)\gg 0$ and has meromorphic continuation to the whole complex plane (\cite[\S IV]{MW95}).
We note here that under the the normalization of Shahidi, we can take $\lambda_s=s\wt \alpha$ (see \cite{Sh90}), where $\alpha$ is the unique reduced root of the maximal $F$-split torus of $H^\delta$ in $U$.

As in \cite{JLZ13}, define
$$
\beta(s)=L(s+1,\tau\times \sigma)\cdot L(2s+1,\tau,\wedge^2),
$$
where the $L$-functions are defined through the Langlands-Shahidi method (\cite{Sh10}). Then define the normalized Eisenstein series
$$
E^*(h, s,\phi_{\tau\otimes \sigma})=\beta(s)E(h, s,\phi_{\tau\otimes \sigma}),
$$
which has the following properties.

\begin{prop}[\cite{JLZ13}, Proposition 4.1]\label{jlz13}
 Let $\tau$ and $\sigma$ be as above. Then the
 normalized Eisenstein series $E^*(h, s,\phi_{\tau\otimes \sigma})$ has a simple pole at $s=\frac{1}{2}$ if and only if
 $L(s, \tau, \wedge^2)$ has a pole at $s=1$ and $L(\frac{1}{2}, \tau\times \sigma)\neq 0$.
\end{prop}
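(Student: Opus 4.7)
The plan is to read the poles of $E^{*}$ off the constant term of $E(h,s,\phi_{\tau\otimes\sigma})$ along $P$ and use the Langlands--Shahidi normalization of the intertwining operator. Since $P$ is a maximal parabolic and the inducing data $\tau\otimes\sigma$ is cuspidal, the poles of the Eisenstein series coincide with those of its constant term (see \cite[\S IV]{MW95}). That constant term takes the standard form
\begin{equation*}
E_P(h,s,\phi_{\tau\otimes\sigma}) = \lambda_s\phi_{\tau\otimes\sigma}(h) + \bigl(M(w_0,s)\lambda_s\phi_{\tau\otimes\sigma}\bigr)(h),
\end{equation*}
where $w_0$ is the unique non-trivial Weyl representative sending $P$ to its opposite parabolic. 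The first summand is entire in $s$, so all poles must come from $M(w_0,s)$.

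Next, one applies Shahidi's theory. The adjoint action of ${}^{L}M\simeq\GL_{2n}(\BC)\times\SO_{2}^{\delta}(\BC)$ on the Lie algebra of the dual unipotent radical decomposes as $r_1\oplus r_2$, where $r_1$ is the tensor product of the standard representations of the two factors and $r_2$ is the exterior square of the standard of $\GL_{2n}$, yielding the $L$-functions $L(s,\tau\times\sigma)$ and $L(s,\tau,\wedge^2)$ respectively. The Gindikin--Karpelevich--Shahidi factorization then gives
\begin{equation*}
M(w_0,s) = \frac{L(s,\tau\times\sigma)\,L(2s,\tau,\wedge^2)}{L(s+1,\tau\times\sigma)\,L(2s+1,\tau,\wedge^2)}\cdot\frac{1}{\varepsilon(s,\tau\times\sigma)\,\varepsilon(2s,\tau,\wedge^2)}\cdot N(w_0,s),
\end{equation*}
where $N(w_0,s)$ is the normalized intertwining operator. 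The Langlands--Shahidi theory (\cite{Sh10}) ensures that $N(w_0,s)$ is holomorphic and non-zero at $s=1/2$ and that the $\varepsilon$-factors are entire and nowhere vanishing.

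Multiplying by $\beta(s) = L(s+1,\tau\times\sigma)\,L(2s+1,\tau,\wedge^2)$ clears the denominator, so near $s=1/2$ the operator $\beta(s)M(w_0,s)$ is proportional through entire, nowhere-zero factors to $L(s,\tau\times\sigma)\,L(2s,\tau,\wedge^2)$. Now $L(2s,\tau,\wedge^2)$ has a simple pole at $s=1/2$ exactly when $L(s,\tau,\wedge^2)$ has a pole at $s=1$, and $L(s,\tau\times\sigma)$ specializes at $s=1/2$ to the central value $L(1/2,\tau\times\sigma)$. Combining these observations, $E^{*}(h,s,\phi_{\tau\otimes\sigma})$ has a simple pole at $s=1/2$ if and only if both hypotheses hold; both directions of the equivalence are handled by this one computation.

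The main obstacle is controlling $N(w_0,s)$ at $s=1/2$: at unramified places this is immediate from the Gindikin--Karpelevich identity, but at ramified places holomorphy and non-vanishing must be extracted from the local theory of Shahidi $L$- and $\gamma$-factors for generic representations together with the global functional equation propagating the unramified normalizations. A secondary point is to check that the two summands in $E_P$ cannot cancel at a pole, but since they transform under distinct characters of the split centre of $M$ they are linearly independent, so cancellation is ruled out automatically.
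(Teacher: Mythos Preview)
Your argument is the standard one and is correct in outline; note, however, that the present paper does not actually prove this proposition---it is quoted verbatim from \cite[Proposition~4.1]{JLZ13} and used as input. So there is no ``paper's own proof'' to compare against here. What you have written is essentially the argument one finds in \cite{JLZ13}: compute the constant term along $P$, identify the global intertwining operator via the Langlands--Shahidi factorization with the ratio
\[
\frac{L(s,\tau\times\sigma)\,L(2s,\tau,\wedge^{2})}{L(s+1,\tau\times\sigma)\,L(2s+1,\tau,\wedge^{2})}
\]
times $\varepsilon$-factors and a normalized operator, multiply through by $\beta(s)$, and read off the pole.

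Two small points worth tightening. First, you should say explicitly that $\beta(s)$ itself is holomorphic and nonzero at $s=1/2$ (both factors lie in the region of absolute convergence), so the identity summand $\beta(s)\lambda_s\phi$ contributes no pole; you use this implicitly. Second, the ``main obstacle'' you flag---holomorphy and nonvanishing of $N(w_0,s)$ at $s=1/2$---is genuine, and in \cite{JLZ13} it is handled by appealing to results of Arthur and Moeglin on normalized intertwining operators rather than purely to Shahidi's local theory; your sketch of how to extract it is a bit vague, so if you were writing this up in full you would want to cite the precise input (e.g.\ the relevant sections of \cite{A} or \cite{M11}) rather than gesture at ``the local theory of Shahidi $L$- and $\gamma$-factors.'' With those caveats, the argument is sound.
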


Let $\CE_{\tau\otimes \sigma}$ denote the automorphic representation of $H^\delta(\BA)$ generated by the residues at $s=\frac{1}{2}$
of $E(h,s,\phi_{\tau\otimes \sigma})$
for all $\phi_{\tau\otimes \sigma}\in \CA(M(F)U(\BA)\bs H^\delta(\BA))_{\tau\otimes \sigma}$. From now on, we assume that
$L(s, \tau, \wedge^2)$ has a pole at $s=1$ and $L(\frac{1}{2}, \tau\times \sigma)\neq 0$. In this case, $\tau$ has trivial central
character (\cite{JS90}).
By Proposition \ref{jlz13}, the residual
representation $\CE_{\tau\otimes \sigma}$ is nonzero.
By the $L^2$-criterion in \cite{MW95}, \cite[Theorem 6.1]{JLZ13} shows that the residual representation $\CE_{\tau\otimes \sigma}$
is square-integrable.
Moreover, the residual representation $\CE_{\tau\otimes \sigma}$ is irreducible, following Theorem A of Moeglin in
\cite{M11}, for instance.
Note that the global Arthur parameter (see \cite{A}) for $\CE_{\tau\otimes \sigma}$ is
$(\tau,2) \boxplus \psi_\sigma$ (\cite[\S 6]{JLZ13}).

\subsection{Fourier coefficients attached to partition $[(2\ell+1)1^{4n-2\ell+1}]$}
Following \cite{J14}, one defines Fourier coefficients of automorphic forms of classical groups attached to nilpotent orbits and hence to
a partition. For $H^\delta=\SO^\delta_{4n+2}$, we consider here the Fourier coefficients attached to the partition $[(2\ell+1)1^{4n-2\ell+1}]$ of
$4n+2$ with $1\leq \ell \leq 2n$, which is also known as Bessel-Fourier coefficients. More precisely,
for $1\leq \ell \leq 2n$, consider the following partial flag
$$
\CF_\ell: \quad 0\subset V_1^+\subset V_2^+\subset\cdots \subset V_\ell^+,
$$
which defines the standard parabolic subgroup $P_\ell$ of $H^\delta$, with the Levi decomposition $P_\ell=M_\ell\cdot N_\ell$.
The Levi part $M_\ell\simeq (\GL_1)^\ell\times \SO(W_\ell)$, with
$$
W_\ell= (V_\ell^+\oplus V_\ell^-)^\perp.
$$

Following \cite{W01}, the $F$-rational nilpotent orbits in the $F$-stable nilpotent orbits in the Lie algebra of $\SO^\delta_{4n+2}$ associated to
the partition $[(2\ell+1)1^{4n-2\ell+1}]$ are parameterized by $F$-rational orbits in the $F$-anisotropic vectors in $W_\ell$ under the action of
$\GL_1(F)\times\SO(W_\ell)$. The action of $\GL_1(F)\times\SO(W_\ell)$ on $W_\ell$ is induced naturally from the adjoint action of $M_\ell$ on
the unipotent radical $N_\ell$.

Take an anisotropic vector $w_0\in W_\ell$ with $\pair{w_0,w_0}$ in a given square class of $F^\times$, and define a homomorphism
$\chi_{\ell,w_0}: N_\ell\lra \BG_a$ by
$$
\chi_{\ell,w_0}(u)=
\sum_{i=2}^\ell \pair{u\cdot e_i,e_{-(i-1)}}+\pair{u \cdot w_0,e_{-\ell}}.
$$
Recall that here $\pair{\cdot ,\cdot }$ denotes the quadratic form on $V$. Define also a character
$$
\psi_{\ell,w_0}=\psi\circ\chi_{\ell,w_0}: N_\ell(\BA)\lra \BC^\times,
$$
where $\psi: F\bs \BA\lra \BC^\times$ is a fixed non-trivial additive character. Hence the character $\psi_{\ell,w_0}$ is trivial on $N_{\ell}(F)$.
Now the adjoint action of $M_\ell$ on $N_\ell$ induces an action of $\SO(W_\ell)$ on the set of all such characters $\psi_{\ell,w_0}$.
The stabilizer $L_{\ell,w_0}$ of $\chi_{\ell, w_0}$ in $\SO(W_\ell)$ equals to $\SO(w_0^\perp\cap W_\ell)$.

Let $\Pi$ be an automorphic representation of $H^\delta(\BA)$ (see \cite[Section 4.6]{BJ79}), occurring in the discrete spectrum. 
For $f\in V_\Pi$ and $h\in H^\delta(\BA)$, we define the
$\psi_{\ell,w_0}$-Fourier coefficients of $f$ by
\begin{equation}\label{Gelfand-Graev}
 f^{\psi_{\ell, w_0}}(h)=\int_{[N_\ell]} f(vh)\psi_{\ell,w_0}^{-1}(v)\ \mathrm{d}v,
\end{equation}
here $[N_\ell]$ denotes the quotient $N_\ell(F)\bs N_\ell(\BA)$. This is one of the Fourier coefficients of $f$ associated to the partition
$[(2\ell+1)1^{4n-2\ell+1}]$. It is clear that $f^{\psi_{\ell, w_0}}(h)$ is left $L_{\ell,w_0}(F)$-invariant, with
$L_{\ell,w_0}=\SO(w_0^\perp\cap W_\ell)$. Following \cite[Section 3.1]{GRS11}, we define the space
$$
\sigma_{\psi_{\ell,w_0}}(\Pi)=L_{\ell,w_0}(\BA)-\text{Span} \left\{f^{\psi_{\ell, w_0}}\bigg|_{L_{\ell,w_0}(\BA)}\ \bigg|\ f\in V_\Pi\right\},
$$
which is a representation of $L_{\ell,w_0}(\BA)$, with right translation action. 
The situation that is the main concern of the paper is the case of the so-called first occurrence, as described in Theorem \ref{Main theorem of descent}. In such a situation, the function $f^{\psi_{\ell, w_0}}(h)$ will be cuspidal for all $f\in V_{\Pi}$, in particular, square-integrable.
Hence we may identify the space $\sigma_{\psi_{\ell,w_0}}(\Pi)$ as a subspace of the discrete spectrum of the space of square-integrable
automorphic functions on $L_{\ell,w_0}(\BA)$, and $\sigma_{\psi_{\ell,w_0}}(\Pi)$ becomes an automorphic $L_{\ell,w_0}(\BA)$-module in the sense of \cite[Section 4.6]{BJ79}. 

To make a precise choice of the anisotropic vector $w_0$ for the cases $1\leq \ell <2n$, we take
$$
w_0=y_\alpha=e_{2n}-\frac{\alpha}{2}e_{-2n}
$$
for $\alpha\in F^\times$, and hence we have $\pair{w_0,w_0}=-\alpha$. For such an $\alpha$, we consider the three-dimensional quadratic
form given by
$J_{\delta,\alpha}=\begin{pmatrix}1&&\\&\delta&\\&&\alpha\end{pmatrix}$,
which can be split or non-split over $F$, depending on the Hilbert symbol $(\delta,\alpha)$.
For the cases $\ell=2n$, we take any nonzero $w_0\in V_0$.
Hence for $1\leq \ell < 2n$ and $w_0=y_\alpha$, we have
$
L_{\ell,\alpha}=\SO^{\delta,\alpha}_{4n-2\ell+1},
$
which is a special odd orthogonal group defined by the form
$$
J=\begin{pmatrix}
   &&w_{2n-\ell-1}\\&J_{\delta,\alpha}&\\w_{2n-\ell-1}&&
  \end{pmatrix}.
$$
We denote $\psi_{\ell,y_\alpha}$ by $\psi_{\ell,\alpha}$,
and $\sigma_{\psi_{\ell,y_\alpha}}(\Pi)$ by $\sigma_{\psi_{\ell,\alpha}}(\Pi)$.

\subsection{The twisted automorphic descent}
Now we apply the $\psi_{\ell,w_0}$-Fourier coefficients to the residual representation $\CE_{\tau\otimes\sigma}$ of
$\SO_{4n+2}^\delta(\BA)$ and investigate more carefully its properties depending on the integer $\ell$.
One of the main results in this paper is the following theorem.

\begin{thm}\label{Main theorem of descent}
Assume that an irreducible unitary cuspidal automorphic representation $\tau$ of $\GL_{2n}(\BA)$ has the property that
$L(s, \tau, \wedge^2)$ has a pole at $s=1$ and there exists an irreducible unitary (cuspidal) representation $\sigma$ of $\SO^\delta_2(\BA)$
such that $L(\frac{1}{2}, \tau\times \sigma)\neq 0$. Then the following hold.
\begin{enumerate}
\item The representation $\sigma_{\psi_{\ell,\alpha}}(\CE_{\tau\otimes\sigma})$ of $\SO^{\delta,\alpha}_{4n-2\ell+1}(\BA)$ is zero
for all $n<\ell\leq 2n$.
\item For any square class $\alpha$ in $F^\times$, the representation $\sigma_{\psi_{n,\alpha}}(\CE_{\tau\otimes\sigma})$ of $\SO^{\delta,\alpha}_{2n+1}(\BA)$ is cuspidal automorphic.
\item There exists a square class $\alpha$ in $F^\times$ such that the representation $\sigma_{\psi_{n,\alpha}}(\CE_{\tau\otimes\sigma})$ of $\SO^{\delta,\alpha}_{2n+1}(\BA)$ is nonzero, and in this case
    $$
    \sigma_{\psi_{n,\alpha}}(\CE_{\tau\otimes\sigma})=\pi_1\oplus\pi_2\oplus\cdots\oplus\pi_r\oplus\cdots,
    $$
    where $\pi_i$ are irreducible cuspidal automorphic representations of $\SO^{\delta,\alpha}_{2n+1}(\BA)$, which are nearly equivalent, but
    are not globally equivalent, i.e. the decomposition is multiplicity free.
\item When $\sigma_{\psi_{n,\alpha}}(\CE_{\tau\otimes\sigma})$ is nonzero, any direct summand $\pi_i$ of $\sigma_{\psi_{n,\alpha}}(\CE_{\tau\otimes\sigma})$ has a weak Langlands functorial
transfer to $\tau$ in the sense that the Satake parameter of the local unramified component $\tau_v$ of $\tau$ is the local functorial transfer
of that of the local unramified component $\pi_v$ of $\pi$ for almost all unramified local places $v$ of $F$.
\item When $\sigma_{\psi_{n,\alpha}}(\CE_{\tau\otimes\sigma})$ is nonzero, every irreducible
direct summand of $\sigma_{\psi_{n,\alpha}}(\CE_{\tau\otimes\sigma})$ has a nonzero Fourier coefficient attached to the
partition $[(2n-1)1^2]$.
\item The residual representation $\CE_{\tau\otimes\sigma}$ has a nonzero Fourier coefficient attached to the partition
$[(2n+1)(2n-1)1^2]$.
\end{enumerate}
\end{thm}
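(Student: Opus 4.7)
My plan follows the general philosophy of the automorphic descent method of \cite{GRS11}, adapted to the twisted setting where the Eisenstein series is built from the cuspidal datum $(\GL_{2n}\times\SO_2^\delta,\tau\otimes\sigma)$ rather than from $\GL_{2n}$ alone. The basic tool is the unfolding of Fourier coefficients of the Eisenstein series $E(h,s,\phi_{\tau\otimes\sigma})$ against the characters $\psi_{\ell,w_0}$, followed by taking residues at $s=\tfrac{1}{2}$ via Proposition \ref{jlz13}. I would systematically exploit the Bruhat decomposition $H^\delta(F)=\bigsqcup_w P(F)\,w\,P_\ell(F)$ to reduce each Fourier coefficient to a sum of integrals over Bruhat cells, each of which involves a period of $\tau\otimes\sigma$; cuspidality of $\tau$ and of $\sigma$ then kills almost every cell, leaving only finitely many controlled contributions.

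For parts (1) and (2) the key work is constant term analysis. Fixing $n<\ell\leq 2n$, I would compute the constant term of $\CE_{\tau\otimes\sigma}^{\psi_{\ell,\alpha}}$ along every standard parabolic of $L_{\ell,\alpha}=\SO^{\delta,\alpha}_{4n-2\ell+1}$; unfolding against the Bruhat cells and combining with the fact that the Arthur parameter of $\CE_{\tau\otimes\sigma}$ is $(\tau,2)\boxplus\psi_\sigma$ should force every constant term, hence the function itself, to vanish. For $\ell=n$ the same calculation produces vanishing along every \emph{proper} parabolic of $\SO^{\delta,\alpha}_{2n+1}$, yielding cuspidality. These computations parallel Sections 5-6 of \cite{GRS11}, but the presence of the nontrivial $\sigma$-factor requires a careful tracking of cells in which the $\SO_2^\delta$-component appears; cuspidality of $\sigma$ on the anisotropic $\SO_2^\delta$ replaces the role of cuspidality of $\tau_0$ on $\GL_1$ in the classical GRS argument.

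For parts (3) and (4), I would compute a Bessel-Whittaker period of $\sigma_{\psi_{n,\alpha}}(\CE_{\tau\otimes\sigma})$ by unfolding it back to an integral on $H^\delta(\BA)$. Taking the residue at $s=\tfrac{1}{2}$ and using the normalizing factor $\beta(s)$, the leading term should be identified, up to a nonzero factor coming from the residue of $L(s,\tau,\wedge^2)$ at $s=1$, with the central value $L(\tfrac{1}{2},\tau\times\sigma)$. Since both factors are nonzero by hypothesis, one can choose data so that the period is nonzero for some anisotropic vector $w_0=y_\alpha$, which selects the required square class $\alpha$. Multiplicity-freeness of the direct sum decomposition follows from local uniqueness of Bessel models at every place. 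For (4), the weak functorial transfer is obtained by computing the local Satake parameters of the unramified components $\pi_v$ directly from the unramified descent attached to $\CE_{\tau\otimes\sigma,v}$ and matching them with those of $\tau_v$, as in \cite{GRS11}.

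For (5), using the cuspidality established in (2) and the weak transfer in (4), each direct summand $\pi_i$ lies in a tempered local $A$-packet whose wavefront set contains the partition $[(2n-1)1^2]$; combined with the Jacquet-module analysis of $\pi_i$ relative to this partition, one produces a nonzero Fourier coefficient of type $[(2n-1)1^2]$. Part (6) is then a formal consequence: composing the descent $\CE_{\tau\otimes\sigma}\leadsto\sigma_{\psi_{n,\alpha}}(\CE_{\tau\otimes\sigma})=\bigoplus\pi_i$ with the $[(2n-1)1^2]$-Fourier coefficient on each $\pi_i$ yields, via a standard root-exchange / nilpotent-gluing argument in the spirit of \cite{GRS11}, a nonzero Fourier coefficient of $\CE_{\tau\otimes\sigma}$ attached to the composed partition $[(2n+1)(2n-1)1^2]$. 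The main obstacle will be the nonvanishing step in (3): one must verify that the Rankin-Selberg unfolding of the Bessel period truly isolates $L(\tfrac{1}{2},\tau\times\sigma)$, and that no auxiliary local factor introduced by the twist with the anisotropic $\SO_2^\delta$ cancels this central value.
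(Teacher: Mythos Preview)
Your plan diverges from the paper's proof in several places, and in at least two of them the divergence is a genuine gap rather than an alternative route.

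For Parts (1)--(2), you propose a purely global argument: unfold the Fourier coefficient against the Bruhat decomposition and invoke the Arthur parameter $(\tau,2)\boxplus\psi_\sigma$ to kill all constant terms. The paper does not do this; it takes a local approach. At a single finite place $v$ where all data are unramified, it computes the twisted Jacquet module $J_{\psi_{\ell,\alpha}}$ of the unramified constituent of $\Ind_{Q_{2n}}^{H}\tau_v|\det|^{1/2}\otimes\sigma_v$ using the general Jacquet-module formulas of \cite[Chapter~5]{GRS11}, and shows it vanishes for $\ell>n$ (Proposition~\ref{local theory}). The global vanishing then follows by local--global compatibility, and cuspidality for $\ell=n$ comes from the tower property of \cite[Theorem~7.3]{GRS11}. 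Your global approach is not obviously wrong, but the paper explicitly remarks that a purely global argument ``should be interesting and expected'' and is left for future work; in particular, the step where the Arthur parameter ``forces'' vanishing is not something you can simply invoke.

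For Part (3), the paper's argument is entirely different from yours. It does not identify any period with $L(\tfrac{1}{2},\tau\times\sigma)$. Instead it first proves (Proposition~\ref{thm1}) that $\CE_{\tau\otimes\sigma}$ has a nonzero Fourier coefficient attached to $[(2n)^21^2]$, by an elaborate sequence of root exchanges reducing to the genericity of $\tau$ along the Levi. It then argues by contradiction (Proposition~\ref{thm2}): assuming no $[(2n+1)1^{2n+1}]$-coefficient, it shows that a certain nonzero function $f$ satisfies both $f(D(t)g)=|t|_\BA f(g)$ and $f(D(t)g)=|t|_\BA^{-1}f(g)$ via conjugation by a Weyl element $\omega$, forcing $f\equiv 0$. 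Your proposed Rankin--Selberg-style identification of a Bessel period with $L(\tfrac12,\tau\times\sigma)$ is essentially what the paper carries out only in the special case $n=1$ (Section~5), where the relevant integral representation is available from \cite{GPSR97} and \cite{JZ14}; for general $n$ you have not explained which unfolding would isolate precisely this $L$-value, and this is the ``main obstacle'' you yourself flag.

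For Part (5), your argument has a real gap. You want to say that each $\pi_i$ lies in a tempered local $A$-packet whose wavefront set contains $[(2n-1)1^2]$, and then conclude. But the determination of wavefront sets for members of Arthur packets on non-quasisplit inner forms is exactly the kind of result one is trying to \emph{establish} by descent methods; invoking it here is circular. The paper instead uses the global zeta integral of \cite{JZ14}: since $\pi_i$ occurs in the descent, the inner product $\langle\varphi_{\pi_i},\xi^{\psi_{n,\alpha}}\rangle$ is nonzero for some data, and the unfolding of this integral (\cite[Proposition~3.7]{JZ14}) directly exhibits a nonzero $\psi_{n-1,\alpha'}$-coefficient of $\varphi_{\pi_i}$.

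Parts (4) and (6) in your plan are close to the paper's: (4) is indeed a local Satake-parameter match via the Jacquet-module computation (Proposition~\ref{Jacquet module: list}), and (6) is a composition-of-partitions argument in the style of \cite{GRS03} and \cite{JL15}.
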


For these square classes $\alpha$ in $F^\times$ such that $\sigma_{\psi_{n,\alpha}}(\CE_{\tau\otimes\sigma})$ is nonzero,
we call $\sigma_{\psi_{n,\alpha}}(\CE_{\tau\otimes\sigma})$ the {\sl twisted automorphic descent} of $\tau$ to
$\SO^{\delta,\alpha}_{2n+1}(\BA)$. It is clear from the notation and more importantly the explicit construction that
the twisted automorphic descent $\sigma_{\psi_{n,\alpha}}(\CE_{\tau\otimes\sigma})$ depends on the data $(\tau; \delta,\sigma; \alpha)$.
The $\GL_1(F)\times\SO(W_\ell)(F)$-orbits of anisotropic vectors in $W_\ell$ labeled by the pair $(\delta, \alpha)$
determine more refined properties of the
twisted automorphic descent $\sigma_{\psi_{n,\alpha}}(\CE_{\tau\otimes\sigma})$, which will be briefly discussed at the end of Section 3
in connection to Parts (5) and (6) of Theorem \ref{Main theorem of descent} and to the structure of the relevant global Vogan packets. We refer to
\cite{JZ15} for more discussion on the general framework of the twisted automorphic descent constructions for classical groups with
connections to explicit structures of global Arthur packets and global Vogan packets. In order to show the potential of
the theory of twisted automorphic descents, we provide in Section 5 a complete description of the method and result for
the case of $n=1$, which recovers the classical Jacquet-Langlands correspondence. Hence the twisted automorphic descent discussed in this
paper provides a new method to establish the Jacquet-Langlands correspondence for $\PGL(2)$.

Parts (1) and (2) of the theorem are usually called the {\sl tower property} and {\sl cuspidality}.
They will be proved in Section 2 after establishing
the vanishing property of the corresponding local Jacquet module at one unramified local place, following the idea of \cite{GRS11}. In order
to establish the {\sl first occurrence} at $\ell=n$, which is the first assertion in Part (3) of the theorem,
we have to understand the structure of the Fourier coefficients
of the residual representation $\CE_{\tau\otimes\sigma}$ following the general framework of \cite{J14}. We first prove that
the residual
representation $\CE_{\tau\otimes\sigma}$ has a nonzero Fourier coefficient attached to the partition $[(2n)^21^2]$ in Section 3
(Proposition 3.1),
which is based on the cuspidal support of $\CE_{\tau\otimes\sigma}$. After that,
we use the argument similar to that in \cite[Section 5]{GJR05}, but with full detail, to prove that there exists at least
one square class $\alpha$ in $F^\times$ such
that the twisted automorphic descent $\sigma_{\psi_{n,\alpha}}(\CE_{\tau\otimes\sigma})$ is nonzero (Proposition~\ref{thm2}),
which yields the first assertion in Part (3) of the theorem. It is clear that the second assertion in Part (3) of
the theorem follows from Part (4) for the assertion of nearly equivalence, while the multiplicity free decomposition follows from
the local uniqueness of Bessel models (\cite{AGRS}, \cite{JSZ10} and \cite{GGP12}). If we assume that the global Gan-Gross-Prasad conjecture
holds for this case, then may deduce that $\sigma_{\psi_{n,\alpha}}(\CE_{\tau\otimes\sigma})$ is irreducible. We will not discuss this issue
in such a generality in this paper, but we will discuss the case of $n=1$ in Section 5.
Next, we continue in Section 3 to obtain Part (5) of the theorem by showing that the irreducible summands in the
twisted automorphic descent $\sigma_{\psi_{n,\alpha}}(\CE_{\tau\otimes\sigma})$ have nonzero Fourier coefficients attached to the partition
$[(2n-1)1^2]$. Note that if $n=1$, this partition is trivial, and the corresponding Fourier coefficient is the identity map.
It is clear that if the group $\SO^{\delta,\alpha}_{3}$ is
$F$-anisotropic, then its only nilpotent orbit is the trivial one, corresponding to the trivial partition.
However, if the group $\SO^{\delta,\alpha}_{3}$ is $F$-split, because of Part (4), we expect that the
twisted automorphic descent has a nonzero Whittaker-Fourier coefficient. There will be a more detailed discussion for this case in Section 5.
In Section 4, we are going to prove that the Satake parameter of the unramified local component of any of the direct summand $\pi_i$ of
$\sigma_{\psi_{n,\alpha}}(\CE_{\tau\otimes\sigma})$ transfers canonically to the Satake parameter of the unramified local component of
$\tau$, and hence the direct summands of $\sigma_{\psi_{n,\alpha}}(\CE_{\tau\otimes\sigma})$ are nearly equivalent. This proves Parts (3) and (4) of
the theorem. The last part (Part (6) of the theorem) follows from Part (5) as consequence of the
Fourier coefficients associated to a composite of two partitions as discussed in \cite{GRS03} and \cite{JL15}, which will be
briefly discussed before the end of Section 3.

Acknowledgement. This material is based upon work supported by the National Science Foundation under agreement No. DMS-1128155. Any opinions, findings and conclusions or recommendations expressed in this material are those of the authors and do not necessarily reflect the views of the National Science Foundation. 
We also would like to thank the referee for very helpful comments on the previous version of the paper, which make the paper more readable.


\section{On Vanishing and Cuspidality}


We are going to show that the $\psi_{\ell,w_0}$-Fourier coefficients of the residual representation $\CE_{\tau\otimes\sigma}$ of
$\SO_{4n+2}^\delta(\BA)$ vanishes for all $n<\ell\leq 2n$. Following the tower property proved in \cite{GRS11}, the twisted automorphic
descent $\sigma_{\psi_{n,\alpha}}(\CE_{\tau\otimes\sigma})$ is cuspidal, which might also vanish. Such vanishing property of the
family of the Fourier coefficients $\CE_{\tau\otimes\sigma}^{\psi_{\ell,w_0}}$ should be determined by the global Arthur parameter of
the square-integrable residual representation $\CE_{\tau\otimes\sigma}$ and the structure of its Fourier coefficients as conjectured in
\cite{J14}. Through the local-global relation, one may take the local approach to prove such vanishing property as shown in \cite{GRS11}.
However, a purely global argument should be interesting and expected (\cite{GRS99a}). We will come back to this issue in future.

In the following, we follow \cite{GRS11} to take the local approach, which is a calculation of twisted Jacquet module at one unramified local
place. This proves Part (1) of Theorem \ref{Main theorem of descent}. Following the argument of the tower property in \cite{GRS11},
we deduce in the second subsection the cuspidality of the twisted automorphic descent, which is Part (2) of
Theorem \ref{Main theorem of descent}.

\subsection{Calculation of certain Jacquet modules}
Most of the results are deduced from the general results of \cite[Chapter 5]{GRS11}.

Let $k$ be a non-Archimedean local field and $V$ be a non-degenerate quadratic space of dimension $4n+2$ over
$k$. We have a polar decomposition
$$V=V^+\oplus V_0 \oplus V^-,$$
where $V^+$ is a maximal isotropic subspace of $V$ and $V_0$ is anisotropic. There are two cases to be considered:
\begin{enumerate}
 \item $\dim V_0=0$, if $\SO(V)$ is split;
 \item  $\dim V_0=2$, if $\SO(V)$ is quasi-split.
\end{enumerate}
We denote the Witt index of $V$ by $\wt m$, which can equal to $2n+1$ or $2n$ in case (1) or (2), respectively.

Let $H=\SO(V)$ be the special orthogonal group on $V$ over
$k$. Fix a basis $\{e_1,\cdots , e_{2n}\}$ of $V^+$, and if $V_0\neq 0$, we take a basis $\{e_0^{(1)}, e_0^{(2)}\}$ for $V_0$ such that $\pair{e_0^{(1)}, e_0^{(1)}}=1$ and $\pair{e_0^{(2)},e_0^{(2)}}=\delta$ for some $\delta\in k^\times$. Otherwise we fix a basis $\{e_1,\cdots , e_{2n+1}\}$ for $V^+$. As in the global case (\S 2.2),
for $0<\ell\leq 2n$, let $P_\ell$ be the parabolic subgroup of $H$ preserving the partial flag
$$
\CF_\ell: 0\subset V_1^+\subset V_2^+\subset\cdots \subset V_\ell^+.
$$
Then $P_\ell=M_\ell\cdot N_\ell$ with $M_\ell\simeq \BG_m^\ell\times \SO(W_\ell)$,
where $W_\ell$ is the same as in \S 1.2.
For $\alpha\in k^\times$, we take an anisotropic vector $w_0\in V$ as follows:
\begin{equation}\label{local anisotropic vector}
w_0=\begin{cases}
     y_\alpha=e_{2n}-\frac{\alpha}{2}e_{-2n}, & \text{if $\ell < 2n$};\\
     &\\
     \text{$\alpha e_0^{(1)}$ or $\alpha e_0^{(2)}$ in $V_0$}, & \text{if $\ell=2n$ and $V_0\neq 0$}.
    \end{cases}
\end{equation}

\begin{rmk}\label{rmk on anisotropic vector}
By a change of basis if necessary, we see that each anisotropic vector in $V$ is of the form
$\alpha e_i+\beta e_{-i}$ or $\alpha e_0^{(j)}$ (this occurs only if $V_0\neq 0$) for some $\alpha,\beta\in k^\times$. Then the above choices of $w_0$ essentially represent any anisotropic vector in $V$.
\end{rmk}
Similar to the global case (\S 2.2), one defines a character $\psi_\ell$ on $N_\ell(k)$ by
$\psi_\ell=\psi\circ \chi_{\ell,w_0}$, where $\psi$ is a non-trivial additive
character on $k$.
Let $L_{\ell,w_0}=\SO(W_\ell\cap w_0^\perp)$ be the stabilizer of $\chi_{\ell,w_0}$
in $\SO(W_\ell)$.

For any smooth representation $(\Pi,V_\Pi)$ of $H(k)$, we define the (twisted)
Jacquet module
$$J_{\psi_{\ell,w_0}}(V_{\Pi})=V_\Pi/\Span\{\Pi(u)\xi-\psi_{\ell, w_0}(u)\xi\ |\ u\in N_\ell(k),\  \xi\in V_\Pi\},$$
which is an $L_{\ell,w_0}$-module.
To simplify the notation, we will use $\chi_{\ell,\alpha}$, $L_{\ell,\alpha}$ and $J_{\psi_\ell,\alpha}$ if $w_0=y_\alpha$.
Note that with the notation of the previous section, for $0\leq \ell<2n$,
the form of the odd orthogonal group $L_{\ell,\alpha}$ is
$$J=\begin{pmatrix}
   &&w_{2n-\ell-1}\\&J_{\delta,\alpha}&\\w_{2n-\ell-1}&&
  \end{pmatrix},
$$
and hence we have the following cases:
\begin{itemize}
 \item[(i)] if $J_{\delta,\alpha}$ is non-split over $k$, then $L_{\ell,\alpha}$ is non-split, and $\mathrm{Witt}(k\cdot y_{-\alpha}+V_0)=0$;
 \item[(ii)] otherwise, the group $L_{\ell,\alpha}$ is $k$-split. In this case, $\mathrm{Witt}(k\cdot y_{-\alpha}+V_0)=1$ if $V_0\neq 0$ (i.e. $-\delta\notin {k^\times}^2$), and
 $\mathrm{Witt}(k\cdot y_{-\alpha}+V_0)=0$ if $V_0=0$.
\end{itemize}
Moreover, if $\ell=2n$, this will be a degenerate case, and $L_{2n,w_0}$ is a trivial group.

For any $0<j\leq 2n$, let
$V_j^+=\Span\{e_1,\cdots, e_j\}$, and let $Q_j$ be the standard parabolic subgroup
of $H$ which preserves $V_j^+$. The group $Q_j$ has a Levi decomposition $Q_j=D_j\cdot U_j$ with
$D_j\simeq \GL_j(k)\times \SO(W_j)$. For $0\leq t <j$, let $\tau^{(t)}$ is the $t$-th Bernstein-Zelevinsky derivative of $\tau$ along
the subgroup
$$
Z_t'=\left\{\begin{pmatrix}
                           I_s&y\\0&z
                          \end{pmatrix}
\in \GL_j(k)\ \big|\ z\in Z_t(k)\right\}
$$
and corresponding to the character
$$
\psi_t'\left(
\begin{pmatrix}
I_s&y\\0&z
\end{pmatrix}
\right)=\psi^{-1}(z_{1,2}+\cdots+z_{t-1,t}).
$$
Then $\tau^{(t)}$ is the representation of $\GL_s(k)$ with $s=j-t$, acting on the Jacquet module $J_{Z_t',\psi_t'}(V_\tau)$ via
the embedding
$$
d\lto \diag(d,I_t)\in \GL_j(k).
$$
For $a\in k^\times$, we also consider the character $\psi_{t,a}^{''}$ on $Z_t'$ defined by
$$
\psi_{t,a}^{''}(
\begin{pmatrix}
 I_s&y\\0&z
\end{pmatrix})=\psi^{-1}(z_{1,2}+\cdots+z_{t-1,t}+ay_{s,1}).
$$
Denote the corresponding Jacquet module $J_{Z_t',\psi_{t,a}^{''}}(V_\tau)$ by $\tau_{(t),a}$, which is a
representation of the mirabolic subgroup $P_{s-1}^1$ of $\GL_s(k)$. Since $\tau_{(t),a}\simeq \tau_{(t),a'}$ for any $a,a'\in k^\times$,
according to \cite[Lemma 5.2]{GRS11}, we sometimes
denote by $\tau_{(t)}$ any of such representations $\tau_{(t),a}$.

Let $\tau$ and $\sigma$ be smooth representations of $\GL_j(k)$ and $\SO(W_j)$ respectively.
The Jacquet module
$J_{\psi_{\ell, \alpha}}(\Ind_{Q_j}^H \tau\otimes \sigma)$
has been studied in \cite{GRS11}. We state it here for completeness.

\begin{prop}[Theorem 5.1 of \cite{GRS11}]\label{Jacquet module 1}
Set $|\cdot|:=|\det(\cdot)|$. The following hold.
\begin{enumerate}
\item Assume that $0\leq \ell <\wt m$ and $1\leq j <\wt m$, then
$$
\begin{aligned} J_{\psi_{\ell, \alpha}}(\Ind_{Q_j}^H \tau\otimes \sigma)\equiv
&\bigoplus_{\ell+j-\wt m<t\leq \ell,\ 0\leq t \leq j}
\ind_{Q_{j-t}'}^{L_{\ell,\alpha}}|\cdot|^{\frac{1-t}{2}}\tau^{(t)}\otimes
J_{\psi'_{{\ell-t},\alpha}}(\sigma^{\omega_b^t})\\
&\oplus \begin{cases}
            \ind_{Q_{j-\ell,\ell}^{'}}^{L_{\ell,\alpha}}|\cdot|^{-\frac{\ell}{2}}\tau_{(\ell)}\otimes \sigma^{\omega_b^{\ell}}, & \text{if $\ell<j$},\\
            0, & \text{otherwise};
           \end{cases}\\
&\oplus \delta_{\alpha}\cdot
\begin{cases}
 \ind_{Q_\alpha'}^{L_{\ell,\alpha}}|\cdot|^{\frac{1+2n-\ell-j}{2}}\tau^{(\ell+j-\wt m)}\otimes J_{\psi'_{{\wt m-j},v_\alpha}}(\sigma^{\omega_b^{\ell+j-2n}}), & \text{if $0<2n-\ell \leq j$},\\
 0, & \text{otherwise}.
\end{cases}
\end{aligned}
$$
\item Assume that $0\leq \ell \leq \wt m$ and $j=\wt m$, then
 $$
 J_{\psi_{\ell, \alpha}}(\Ind_{Q_j}^H \tau\otimes \sigma)\equiv
 \ind_{Q_{\wt m-\ell}'}^{L_{\ell,\alpha}}|\cdot|^{-\frac{\ell}{2}}\tau_{(\ell)}\otimes \sigma^{\omega_b^{\ell}}\oplus
 \delta_\alpha\cdot \ind_{Q_\alpha'}^{L_{\ell,\alpha}}|\cdot|^{\frac{1-\ell}{2}}\tau^{(\ell)}\otimes J_{\psi'_{0,v_\alpha}}(\sigma^{\omega_b^\ell}).
 $$
  \item Assume that $\ell=\wt m$ and $w_0\in V_0$, then
 $$J_{\psi_{\ell,w_0}}(\Ind_{Q_j}^H \tau\otimes \sigma)\equiv d_\tau\cdot J_{\psi'_{{\ell-j},w_0}}(\sigma^{\omega_b^j}),$$
 here $d_\tau=\dim \tau^{(j)}$.
\end{enumerate}
Here $Q_{s}'=L_{\ell,\alpha}\cap \eta_t^{-1}Q_{\ell,j}^{(w)}\eta_t$ with $w$ being the representative of $Q_j\bs H/Q_\ell$ corresponding to the pair $(0,s)$ as in \cite[Chapter 4]{GRS11}, and with $t=j-s$,
$$
Q_{\ell,j}^{(w)}=Q_\ell\cap w^{-1}Q_{j}w, \quad
\eta_t=\begin{pmatrix}
 \epsilon&&\\&I_{4n+2-2\ell}&\\&&\epsilon^*
\end{pmatrix}, \quad \text{and}\
\epsilon=
\begin{pmatrix}
 &I_{\ell-t}\\I_t&
\end{pmatrix}.
$$
Also $Q_{s,t}'=L_{\ell,\alpha}\cap \eta_{s,t}^{-1}Q_{\ell,j}^{(w)}\eta_{s,t}$ with $t$ being the same as above, and
$$\eta_{s,t}=\begin{pmatrix}
 \epsilon&&\\&\gamma_s&\\&&\epsilon^*
\end{pmatrix}, \quad
\gamma_s=
\begin{pmatrix}
 0&I_s&&&\\I_{2n-\ell-s}&0&&&\\&&I_{V_0}&&\\&&&0&I_{2n-\ell-s}\\&&&I_s&0
\end{pmatrix}.$$
Set $\omega_b=\diag\left(I_{\wt m},
\omega_b^0,
I_{\wt m}\right)$, where
$\omega_b^0=
\begin{pmatrix}
1&0\\0&-1
\end{pmatrix}$ if $\wt m=2n$, and
$\omega_b^0=\begin{pmatrix}
1&0\\0&-1
\end{pmatrix}$ if $\wt m=2n+1$.
Moreover, $\delta_\alpha=0$ unless $\mathrm{Witt}(k\cdot y_{-\alpha}+V_0)=1$, and
in such cases, set $v_\alpha\in V_0$ such that $\pair{v_\alpha,v_\alpha}=-\alpha$,
$$\gamma_\alpha=
\begin{pmatrix}
I_{2n-\ell-1} &&&&\\
&1&&&\\
&-v_\alpha&I_{V_0}&&\\
&\frac{\alpha}{2}&v_\alpha'&1&\\
&&&&I_{2n-\ell-1}
\end{pmatrix},
$$
$\gamma=\diag(I_{s-1},\gamma_\alpha, I_{s-1})$, and $\eta_{\gamma_\alpha,t}$ is of the same form as $\eta_{s,t}$ where $\gamma_s$ is replaced by $\gamma_\alpha$.
Finally, $Q_\alpha'=L_{\ell,\alpha}\cap \eta_{\gamma_\alpha,t}^{-1}Q_{\ell,j}^{(w)}\eta_{\gamma_\alpha, t}$, and
$\psi'_{l, \alpha}$ (or $\psi'_{l.v_\alpha}$) denotes the corresponding character like $\psi_{\ell, \alpha}$ but on the groups of smaller rank.
Note that ``$\ind$'' denotes the compact induction, and ``$\equiv$'' denotes isomorphism of representations, up to semi-simplification.
\end{prop}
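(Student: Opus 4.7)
The plan is to prove this by the standard Mackey-Bernstein-Zelevinsky ``geometric lemma'' applied to the induced representation $\Ind_{Q_j}^H(\tau\otimes\sigma)$, followed by a case-by-case analysis of the twisted Jacquet functor along $N_\ell$ with respect to the character $\psi_{\ell,\alpha}$. Since the statement is asserted as Theorem 5.1 of \cite{GRS11}, the strategy is to reproduce that argument in the present notation; the point is not to invent a new proof but to ensure every double coset is accounted for.

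First, I would parametrize the double coset space $Q_j\bs H/P_\ell$ by matrices of Weyl type, following \cite[Chapter 4]{GRS11}. Each orbit representative $w$ is labeled by an ordered pair $(a,b)$ with $0\leq a+b\leq \min(\ell,j)$ (subject to the Witt-index constraints imposed by $V_0$), where $a$ measures the size of the ``straight'' piece and $b$ measures the contribution of the Weyl element $\gamma_s$ or $\gamma_{\alpha}$ that interacts with the anisotropic vector $w_0=y_\alpha$. For each representative $w$, the geometric lemma provides a subquotient of the form
$$
\ind_{L_{\ell,\alpha}\cap w^{-1}Q_j w}^{L_{\ell,\alpha}}\bigl(\delta^{1/2}\cdot (\tau\otimes\sigma)^w\bigr)_{N_\ell,\psi_{\ell,\alpha}},
$$
where the inner Jacquet functor is to be computed by restricting $\psi_{\ell,\alpha}$ to $N_\ell\cap w^{-1}U_j w$ and $N_\ell\cap w^{-1}M_j w$.

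Next, I would perform this inner computation for each orbit type. The ``generic'' cells $(0,t)$ with $0\leq t\leq \ell$ contribute the first sum, producing the factor $|\cdot|^{\frac{1-t}{2}}\tau^{(t)}$ from the Bernstein-Zelevinsky derivative of $\tau$ along $Z_t'$ (the restriction of $\psi_{\ell,\alpha}$ to the unipotent part picks out exactly the character $\psi_t'$), while $\sigma$ is twisted by the Weyl element $\omega_b^t$ on the smaller orthogonal group and inherits the residual character $\psi'_{\ell-t,\alpha}$. The boundary cell with $\ell<j$, $t=\ell$ produces the representation $\tau_{(\ell)}$ of the mirabolic, because the relevant $y_{s,1}$-coordinate becomes non-trivial in the character. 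Finally, the ``exceptional'' cell with $0<2n-\ell\leq j$ exists only when $k\cdot y_{-\alpha}+V_0$ contains an isotropic line, which is precisely the condition $\delta_\alpha\neq 0$; conjugation by $\eta_{\gamma_\alpha,t}$ reroutes $\psi_{\ell,\alpha}$ onto the correct anisotropic vector $v_\alpha\in V_0$, and the resulting orbit yields the last summand. Parts (2) and (3) are then the degenerate specializations obtained when $j=\wt m$ (where the $\sigma$-datum lives on a small or trivial group).

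The main obstacle is purely bookkeeping rather than conceptual: one must verify that the unipotent stabilizers, modular characters, and character restrictions assemble exactly as claimed, and one must confirm that no ``middle'' cells are overlooked. In particular the identification of the exceptional cell with the Witt-index condition on $k\cdot y_{-\alpha}+V_0$ requires a careful matrix calculation to determine when the orbit of $w_0$ under the appropriate Weyl conjugate intersects the anisotropic part $V_0$ non-trivially. Because all of these calculations are carried out in detail in \cite[Chapter 5]{GRS11} for precisely the groups and characters used here, I would cite that proof verbatim after verifying that our normalization of $w_0=y_\alpha$, the basis of $V_0$, and the character $\psi\circ\chi_{\ell,w_0}$ match theirs up to the cosmetic replacement of $\omega_b^0$ by its quasi-split analogue in the $\dim V_0=2$ case.
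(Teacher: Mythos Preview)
Your sketch is the correct approach and matches what \cite[Chapter 5]{GRS11} does: Bruhat/Mackey decomposition of $\Ind_{Q_j}^H(\tau\otimes\sigma)$ with respect to $P_\ell$, followed by computation of the twisted Jacquet module on each double coset, with the exceptional cell appearing exactly when the Witt-index condition on $k\cdot y_{-\alpha}+V_0$ holds.

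However, you should note that the paper itself does not prove this proposition at all. It is stated as ``Theorem 5.1 of \cite{GRS11}'' and simply quoted for use in Propositions \ref{local theory} and \ref{Jacquet module: list}; the authors explicitly say at the start of the subsection that ``Most of the results are deduced from the general results of \cite[Chapter 5]{GRS11}'' and then invoke this result as a black box. So there is nothing in the paper to compare your proposal against: you have supplied (in outline) the argument of the cited reference, whereas the paper treats the statement as established input.
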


\begin{rmk}
Denote
$$V_{\ell,s}^{\pm}=\mathrm{Span}_k\{e_{\pm(\ell+1)},\cdots, e_{\pm(\ell+s)}\}\subset W_{\ell}.$$
When $w_0\in W_{\ell+s}$ or $H(k)$ is split, $Q_s'$ is the maximal parabolic subgroup of $L_{\ell,w_0}$ which preserves the isotropic subspace $\omega_b^t V_{\ell,s}^+\cap w_0^{\perp}$.
Otherwise, it is a proper subgroup of the parabolic subgroup. Moreover, $Q_{s,t}'$ is a subgroup of the maximal parabolic subgroup of $L_{\ell,\alpha}$, which preserves the isotropic subspace
$\eta_{s,t}^{-1} V_{\ell,s}^+\cap y_\alpha^{\perp}$.
\end{rmk}

With above preparation, we turn to the calculation of the twisted Jacquet module for any unramified local component of the residual
representation $\CE_{\tau\otimes\sigma}$.

Let $\tau$ be an irreducible unramified  representation of $\GL_{2n}(k)$ with central character $\omega_\tau=\mathbf{1}$.
Since $\tau$ is generic and self-dual, we may write $\tau$ as the full induced representation from the Borel subgroup as follows:
$$
\tau=\mu_1\times \cdots \times \mu_n\times \mu_n^{-1}\times \cdots \times \mu_1^{-1},
$$
where $\mu_i$'s are unramified characters on $k^\times$.
Also, let $\sigma$ be an irreducible unramified representation of $\SO(W_{2n})$. When $H=\SO(V)$ is $k$-split, the representation $\sigma$
is given by a unramified character $\mu_0$ of $k^\times$; and when $H$ is $k$-quasisplit, $\SO(W_{2n})$ is a compact torus, and hence $\sigma$ is
the trivial representation.

Let $\pi_{\tau\otimes \sigma}$ be the unramified constituent of $\Ind_{Q_{2n}}^H \tau\cdot |\det|^{\frac{1}{2}}\otimes \sigma$.
We will calculate the Jacquet module $J_{\psi_{\ell,\alpha}}(\pi_{\tau\otimes \sigma})$, using Proposition \ref{Jacquet module 1}, and lead to
the following proposition, concerning the vanishing of $J_{\psi_{\ell,\alpha}}(\pi_{\tau\otimes \sigma})$.

\begin{prop}\label{local theory}
Let $\tau$ be an irreducible unramified  representation of $\GL_{2n}(k)$ with central character $\omega_\tau=\mathbf{1}$, and $\sigma$ an irreducible unramified
representation of $\SO(W_{2n})$. The following hold.
\begin{itemize}
 \item[(i)] Assume that $J_\delta$ is non-split over $k$.
 \begin{enumerate}
 \item If $w_0=y_\alpha$ for $\alpha\in k^\times$ such that $J_{\delta,\alpha}$ is split, then
 $J_{\psi_{\ell,\alpha}}(\pi_{\tau\otimes \sigma})=0$ for $\ell\geq n+1$.
 \item If $w_0\in V_0$, then $J_{\psi_{{2n},w_0}}(\pi_{\tau\otimes \sigma})=0$.
\end{enumerate}
\item[(ii)] Assume that $J_\delta$ splits over $k$.  For any choice of $\alpha\in k^\times$, $J_{\psi_{\ell,\alpha}}(\pi_{\tau\otimes \sigma})=0$ for $\ell\geq  n+1$.
\end{itemize}
\end{prop}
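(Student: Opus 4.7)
The plan is to reduce the vanishing of $J_{\psi_{\ell,\alpha}}(\pi_{\tau\otimes\sigma})$ to the explicit decomposition in Proposition 2.1 (i.e., \cite[Theorem 5.1]{GRS11}), followed by a Satake parameter comparison ruling out survival of any spherical constituent. By exactness of the twisted Jacquet functor, $J_{\psi_{\ell,\alpha}}(\pi_{\tau\otimes\sigma})$ is a subquotient of $J_{\psi_{\ell,\alpha}}(\Ind_{Q_{2n}}^{H}\tau|\det|^{1/2}\otimes\sigma)$. I would apply Proposition 2.1 with $j=\wt m=2n$ (parts (1)–(2) for cases (i)(1) and (ii); part (3) for case (i)(2)), which expresses the latter, up to semisimplification, as a short list of induced $L_{\ell,\alpha}$-modules of shape
$$\mathrm{ind}_{Q'}^{L_{\ell,\alpha}}|\cdot|^{c}\tau^{(t)}\otimes J_{\psi'}(\sigma^{\omega_b^t}) \quad\text{or}\quad \mathrm{ind}_{Q'}^{L_{\ell,\alpha}}|\cdot|^{c}\tau_{(t)}\otimes\sigma^{\omega_b^t},$$
with prescribed exponents $c\in\{-\ell/2,(1-\ell)/2,(1+2n-\ell-j)/2\}$, the character $\delta_\alpha$ controlling whether the third-type piece actually appears according to the Witt index of $k\cdot y_{-\alpha}+V_0$.

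Next I would use the fact that $\tau=\mu_1\times\cdots\times\mu_n\times\mu_n^{-1}\times\cdots\times\mu_1^{-1}$ is an unramified principal series, so the Bernstein-Zelevinsky derivatives $\tau^{(t)}$ and mirabolic derivatives $\tau_{(t)}$ admit explicit descriptions as (sums of) shifted principal series on $\GL_{2n-t}$. Combined with the prescribed exponent shifts and the known Jacquet modules of $\sigma$ on the rank-one $\SO(W_{2n})$, each summand above has a fully determined multiset of Satake parameters. One then compares these multisets with the Satake parameters of $\pi_{\tau\otimes\sigma}$, which on the torus of $H(k)$ are $\{\mu_i^{\pm 1}|\cdot|^{1/2}\}_{i=1}^{n}$ together with the contribution from $\sigma$. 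For $\ell\geq n+1$, the exponent shift $|\cdot|^{-\ell/2}$ (resp.\ $|\cdot|^{(1-\ell)/2}$) that multiplies $\tau_{(\ell)}$ (resp.\ $\tau^{(\ell)}$) produces at least one Satake exponent $q^{-\ell/2}$ or $q^{(1-\ell)/2}$ that lies outside the multiset $\{\mu_i^{\pm 1}q^{1/2}\}$, so no spherical constituent can survive in any summand. This forces $J_{\psi_{\ell,\alpha}}(\pi_{\tau\otimes\sigma})=0$ and proves both (i)(1) and (ii).

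The degenerate case (i)(2) receives a slightly different argument via part (3) of Proposition 2.1, which collapses the Jacquet module to $d_\tau\cdot J_{\psi'_{0,w_0}}(\sigma^{\omega_b^{2n}})$. Here I would exploit the hypothesis $-\delta\notin k^{\times 2}$ to verify that the character obtained by conjugating $\psi'_{0,w_0}$ through $\omega_b^{2n}$ is nontrivial on the anisotropic rank-one torus $\SO^\delta_2$, so that the Jacquet module of $\sigma$ against this character automatically vanishes.

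The main obstacle is the combinatorial bookkeeping of exponents, Weyl conjugations $\omega_b^t$, and parabolic supports $Q'_s$, $Q'_{s,t}$, $Q'_\alpha$ across the three geometric settings of Proposition 2.1, especially verifying that the non-split-versus-split dichotomy of $J_{\delta,\alpha}$ interacts correctly with the presence or absence of the third-type summand (governed by $\delta_\alpha$). Once the exponent-matching step is organized uniformly, the conclusion follows mechanically from the strict inequality $\ell\geq n+1$, since this is exactly the threshold beyond which the Satake multiset of $\pi_{\tau\otimes\sigma}$ loses enough room to absorb the shifts forced by the Jacquet-module formulas.
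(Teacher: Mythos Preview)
Your approach has a genuine gap, and the paper's proof avoids it by a trick you are missing.

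The logical issue is in your main step. You apply Proposition~2.1 to $\Ind_{Q_{2n}}^{H}\tau|\det|^{1/2}\otimes\sigma$ with the original generic $\tau$, obtain a list of induced summands, and then try to rule out the survival of a \emph{spherical} constituent by Satake-parameter matching. But that is not what is required: you must show $J_{\psi_{\ell,\alpha}}(\pi_{\tau\otimes\sigma})=0$ as an $L_{\ell,\alpha}(k)$-module, not merely that it has no spherical subquotient. There is no reason that the twisted Jacquet module of a spherical $H(k)$-representation is spherical for $L_{\ell,\alpha}(k)$. Moreover, even the narrower exponent claim is unjustified: the $\mu_i$ are arbitrary unramified characters, so nothing prevents $\mu_i^{\pm 1}|\cdot|^{1/2}$ from coinciding with $|\cdot|^{-\ell/2}$ or $|\cdot|^{(1-\ell)/2}$.

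Your treatment of (i)(2) also fails. With $\ell=j=2n$, Proposition~2.1(3) gives $d_\tau\cdot J_{\psi'_{0,w_0}}(\sigma^{\omega_b^{2n}})$. Here $J_{\psi'_{0,w_0}}$ is the ``Jacquet module'' at level $0$, i.e.\ simple restriction of $\sigma^{\omega_b^{2n}}$ to the trivial group $L_{2n,w_0}$; there is no unipotent integration and no nontrivial character to exploit, so this piece is nonzero. Since the original $\tau$ is generic, $d_\tau=\dim\tau^{(2n)}=1$, and you get a nonzero answer. Your argument that the character becomes nontrivial on $\SO_2^\delta$ does not apply.

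The paper's proof bypasses all of this with one idea: before applying Proposition~2.1, conjugate by a Weyl element so that $\pi_{\tau\otimes\sigma}$ sits inside $\Ind_{Q_{2n}}^{H}\tau'\otimes\sigma$ with
\[
\tau'=\Ind_{P_{2,\ldots,2}}^{\GL_{2n}(k)}\mu_1(\det\nolimits_{\GL_2})\otimes\cdots\otimes\mu_n(\det\nolimits_{\GL_2}).
\]
Because each inducing block $\mu_i(\det_{\GL_2})$ is one-dimensional, $\tau'$ is non-generic and its Bernstein--Zelevinsky derivatives satisfy ${\tau'}^{(\ell)}=0$ for $\ell>n$ and ${\tau'}_{(\ell)}=0$ for $\ell>n-1$. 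Plugging $\tau'$ into Proposition~2.1 then kills the \emph{entire} Jacquet module of the induced representation for $\ell\ge n+1$, so by exactness $J_{\psi_{\ell,\alpha}}(\pi_{\tau\otimes\sigma})=0$ with no further analysis. For (i)(2) the same $\tau'$ gives $d_{\tau'}=\dim{\tau'}^{(2n)}=0$, which is what forces the vanishing. This Weyl-element substitution is the missing ingredient in your proposal.
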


\begin{proof}
We suppose first that $H$ is non-split, i.e. $J_\delta$ is non-split over $k$.

In this case, $\pi_{\tau\otimes \sigma}$ is the unramified constituent of the representation of $H$ induced from the character (here $\sigma=\mathbf{1}$)
 \begin{equation}
 \mu_1|\cdot|^{\frac{1}{2}}\otimes\cdots \otimes\mu_n|\cdot|^{\frac{1}{2}}\otimes\mu_n^{-1}|\cdot|^{\frac{1}{2}}\otimes \cdots \otimes \mu_1^{-1}|\cdot|^{\frac{1}{2}}\otimes\mathbf{1}.
 \end{equation}
 Moreover, one can find a Weyl element of $\SO(V)$ which conjugates above character to
 $$
 \mu_1|\cdot|^{\frac{1}{2}}\otimes\mu_1|\cdot|^{-\frac{1}{2}}\otimes \cdots \otimes \mu_n|\cdot|^{\frac{1}{2}}\otimes\mu_n|\cdot|^{-\frac{1}{2}}\otimes \mathbf{1}.
 $$
 Then, induction by stages, one sees that $\pi_{\tau\otimes \sigma}$ is the unramified constituent of $\Ind_{Q_{2n}}^H \tau'\otimes \sigma$, where
$$\tau'=\Ind_{P_{2,\cdots, 2}}^{\GL_{2n}(k)}\mu_1(\det\!_{\GL_2})\otimes\cdots \otimes \mu_n(\det\!_{\GL_2}).$$
Then since the twisted Jacquet functor corresponding to the descent is exact, we get (i) (1)
by Proposition \ref{Jacquet module 1} (2) if we take $w_0=y_\alpha$ as in (\ref{local anisotropic vector}) (here $j=\wt m=2n$, and $\delta_\alpha=1$ by our assumption on $\alpha\in k^\times$).
Note that we have used the fact that $\tau'_{(n)}=0$.

If $w_0\in V_0$, then by the arguments above we see that we also need to consider the Jacquet module  $J_{\psi_{{2n},w_0}}(\Ind_{Q_{2n}}^{H}\tau'\otimes \sigma)$.
By Proposition \ref{Jacquet module 1} (3), we have (see the notation in Proposition \ref{Jacquet module 1})
$$J_{\psi_{{2n},w_0}}(\Ind_{Q_{2n}}^{H}\tau'\otimes \sigma)\equiv d_{\tau'}\cdot J_{\psi'_{{0},w_0}}(\sigma^{\omega_b^{2n}}).$$
Note that $d_{\tau'}$ is the dimension of the space of $\psi$-Whittaker functionals on $\tau'$, hence is zero by the construction of $\tau'$.
Then the above Jacquet module is zero, and this finishes part (i) of the proposition.


We turn to consider the case that $H$ is split with Witt index $2n+1$ over $k$. By our assumption, $\pi_{\tau\otimes \sigma}$ is the unramified constituent of a representation induced from the character
$$ \mu_1 | \cdot |^{\frac{1}{2}}\otimes\cdots \otimes\mu_n|\cdot|^{\frac{1}{2}}\otimes\mu_n^{-1}|\cdot|^{\frac{1}{2}}\otimes\cdots\otimes \mu_1^{-1}|\cdot|^{\frac{1}{2}}\otimes\mu_0.$$
If $n$ is even, $\pi_{\tau\otimes \sigma}$ is the unramified constituent of $\Ind_{Q_{2n}}^H \tau'\otimes \sigma$, where
$$\tau'=\Ind_{P_{2,\cdots, 2}}^{\GL_{2n}(k)}\mu_1(\det\!_{\GL_2})\otimes\cdots \otimes \mu_n(\det\!_{\GL_2}).$$
And if $n$ is odd, $\pi_{\tau\otimes \sigma}$ is the unramified constituent of $\Ind_{Q_{2n}}^H \tau'\otimes \sigma^{\omega_b^0}$ with
$\displaystyle{\omega_b^0=\begin{pmatrix}
             0&1\\1&0
            \end{pmatrix}}
$.
Thus, applying Proposition \ref{Jacquet module 1} (1) with $\wt m=2n+1$ and $j=2n$, one sees that $J_{\psi_{\ell,\alpha}}(\pi_{\tau\otimes \sigma})=0$ for $\ell\geq n+1$. Note that in this case we
have that $\delta_\alpha=0$ for any choice of $\alpha\in k^\times$. Note that we also use the fact ${\tau'}^{(\ell)}=0$ for $\ell>n$ and $\tau'_{(\ell)}=0$ for $\ell>n-1$.
\end{proof}


For later use, we write out the Jacquet module $J_{\psi_{\ell,\alpha}}(\pi_{\tau\otimes \sigma})$ for the above unramified $\tau$ and $\sigma$,
in the case that $\ell=n$ and the form $J_{\delta,\alpha}$ is split. Note that the
last condition means that the quadratic form $\delta_1 x^2+\delta_2 y^2+\alpha z^2$ represents $0$ over $k$.

\begin{prop}\label{Jacquet module: list}
Assume that $\tau$ and $\sigma$ are unramified representations as above, and the form $J_{\delta,\alpha}$ is split.  Then the Jacquet module can be
realized as follows:
$$
J_{\psi_{n,\alpha}}(\pi_{\tau\otimes \sigma})\prec \Ind_{B_{\SO(2n+1)}}^{\SO(2n+1)}\mu_1\otimes \cdots \otimes \mu_n.
$$
Here ``$\pi_1\prec\pi_2$'' denotes that $\pi_1$ is a subquotient of $\pi_2$.
\end{prop}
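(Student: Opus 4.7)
The plan is to realize $\pi_{\tau\otimes\sigma}$ as a subquotient of an induced representation whose twisted Jacquet module can be handled by Proposition \ref{Jacquet module 1}. Since $\tau$ is unramified and self-dual with trivial central character, it is the unramified constituent of the principal series of $\GL_{2n}(k)$ with inducing data $\mu_1\otimes\cdots\otimes\mu_n\otimes\mu_n^{-1}\otimes\cdots\otimes\mu_1^{-1}$, and hence $\pi_{\tau\otimes\sigma}$ is a subquotient of $\Ind_{Q_{2n}}^{H}(\tau\cdot|\det|^{1/2})\otimes\sigma$. By exactness of the twisted Jacquet functor,
$$
J_{\psi_{n,\alpha}}(\pi_{\tau\otimes\sigma})\prec J_{\psi_{n,\alpha}}\bigl(\Ind_{Q_{2n}}^{H}(\tau\cdot|\det|^{1/2})\otimes\sigma\bigr),
$$
and Proposition \ref{Jacquet module 1} with $j=2n$ and $\ell=n$ expresses the right-hand side as an explicit sum of compactly induced representations on $L_{n,\alpha}$. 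Under the hypothesis that $J_{\delta,\alpha}$ is split, $L_{n,\alpha}(k)\simeq\SO(2n+1)(k)$, and in the non-split $H$ case we have $\delta_\alpha=1$ since $\mathrm{Witt}(k\cdot y_{-\alpha}+V_0)=1$.

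The next step is to isolate, among the terms produced by Proposition \ref{Jacquet module 1}, the one that carries the desired principal series: namely the summand built from the ordinary Bernstein--Zelevinsky derivative $\tau^{(n)}$ (the $\delta_\alpha$-term of Case (2) when $H$ is non-split, or the $t=n$ term of Case (1) when $H$ is split). Since each character $\mu_i^{\pm 1}$ of $\GL_1(k)$ has only two nonzero BZ derivatives, itself on $\GL_1$ and the trivial character on $\GL_0$, the iterated Leibniz rule provides a filtration of $\tau^{(n)}$ whose Jordan--H\"older factors are indexed by $n$-element subsets of the $2n$ inducing characters. The subset obtained by differentiating precisely the characters $\mu_1^{-1},\ldots,\mu_n^{-1}$ yields a Jordan--H\"older subquotient of $\tau^{(n)}$ on $\GL_n(k)$ of the form $\mu_1\times\cdots\times\mu_n$, up to a twist by a power of $|\det|$.

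Tracing this subquotient through the compact induction $\ind_{Q_\alpha'}^{L_{n,\alpha}}|\cdot|^{(1-n)/2}\tau^{(n)}\otimes(\ldots)$ supplied by Proposition \ref{Jacquet module 1}, where $Q_\alpha'$ sits inside the Siegel parabolic $P_n$ of $\SO(2n+1)(k)$ (with Levi isomorphic to $\GL_n$), one obtains $\Ind_{P_n}^{\SO(2n+1)}(\mu_1\times\cdots\times\mu_n)$ as a subquotient once the twist $|\cdot|^{(1-n)/2}$ and the twist from the Leibniz rule are shown to collapse into the modulus-character normalization on $P_n$. A further induction in stages identifies this parabolic induction with $\Ind_{B_{\SO(2n+1)}}^{\SO(2n+1)}\mu_1\otimes\cdots\otimes\mu_n$, giving the desired subquotient relation.

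The main obstacle is the careful bookkeeping of all the normalization twists: those from Proposition \ref{Jacquet module 1}, from the Leibniz formula for BZ derivatives of parabolically induced representations, from the $|\det|^{1/2}$ twist used to realize $\pi_{\tau\otimes\sigma}$, and from the modulus character of the Siegel parabolic. One must also confirm that the remaining summands in Proposition \ref{Jacquet module 1} (involving the degenerate derivative $\tau_{(n)}$, or other Jordan--H\"older factors of $\tau^{(n)}$ corresponding to different $n$-element subsets) either fit compatibly into the same target principal series or contribute further subquotients of it, so that the conclusion is a single subquotient relation as stated.
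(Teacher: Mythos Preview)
Your argument has the subquotient arrow pointing the wrong way at the crucial step. You correctly obtain $J_{\psi_{n,\alpha}}(\pi_{\tau\otimes\sigma})\prec J_{\psi_{n,\alpha}}\bigl(\Ind_{Q_{2n}}^H(\tau|\det|^{1/2})\otimes\sigma\bigr)$ by exactness, and then observe that the target $\Ind_B^{\SO(2n+1)}\mu_1\otimes\cdots\otimes\mu_n$ occurs as \emph{one} subquotient of the right-hand side, coming from a particular Jordan--H\"older factor of $\tau^{(n)}$. But that only places both objects inside a common larger representation; it does not yield $J_{\psi_{n,\alpha}}(\pi_{\tau\otimes\sigma})\prec\Ind_B^{\SO(2n+1)}\mu_1\otimes\cdots\otimes\mu_n$. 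The concern you flag in your last paragraph is in fact fatal: $\tau^{(n)}$ has $\binom{2n}{n}$ Jordan--H\"older factors, and for instance the factor with surviving characters $\mu_1,\mu_1^{-1},\mu_2,\ldots,\mu_{n-1}$ induces to a principal series of $\SO(2n+1)$ in which $\mu_n$ does not appear at all, hence not Weyl-equivalent to the target. In addition $\tau_{(n)}$ need not vanish for the full principal series $\tau$, giving yet another unwanted summand.

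The device the paper uses, and which your proposal is missing, is a preliminary Weyl conjugation inside $H$ that replaces the inducing data by
\[
\tau'=\Ind_{P_{2,\ldots,2}}^{\GL_{2n}(k)}\mu_1(\det{}_{\GL_2})\otimes\cdots\otimes\mu_n(\det{}_{\GL_2}),
\]
pairing each $\mu_i|\cdot|^{1/2}$ with $\mu_i|\cdot|^{-1/2}$ into a single $\GL_2$-block; then $\pi_{\tau\otimes\sigma}$ is still the unramified constituent of $\Ind_{Q_{2n}}^H\tau'\otimes\sigma$. Each one-dimensional block $\mu_i(\det{}_{\GL_2})$ is non-generic, so its second derivative vanishes, and the Leibniz rule forces every block to contribute exactly one derivative to $\tau'^{(n)}$. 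Thus $\tau'^{(n)}$ is a \emph{single} representation with $|\det|^{(1-n)/2}\tau'^{(n)}=\Ind_{B_{\GL_n}}^{\GL_n}\mu_1\otimes\cdots\otimes\mu_n$, and simultaneously $\tau'_{(n)}=0$. With $\tau'$ in place of $\tau$, Proposition~\ref{Jacquet module 1} leaves exactly one surviving summand, equal to the target, and the subquotient chain closes in the correct direction.
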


\begin{proof}
Suppose first that $J_\delta$ is non-split. We have that $\mathrm{Witt}(V)=2n$, $\mathrm{Witt}(k\cdot y_{-\alpha}+V_0)=1$
($J_{\delta,\alpha}$ is split),
and $\sigma=\mathbf{1}$. Conjugating by a Weyl element as in the proof of Proposition \ref{local theory}, one sees that
$\pi_{\tau\otimes \sigma}$ is the unramified constituent of $\Ind_{Q_{2n-2}}^H \tau'\otimes \sigma$, where
$$
\tau'=\Ind_{P_{2,\cdots, 2}}^{\GL_{2n}(k)}\mu_1(\det\!_{\GL_2})\otimes\cdots \otimes \mu_{n}(\det\!_{\GL_2}).
$$
Now applying Proposition \ref{Jacquet module 1} (2) for $\ell=n$, $j=2n=\wt m$, and using the fact that ${\tau'}_{(n)}=0$, we have
$$
J_{\psi_{n,\alpha}}(\pi_{\tau\otimes \sigma})\prec  ind_{Q_\alpha'}^{L_{n,\alpha}}|\det(\cdot )|^{\frac{1-n}{2}}{\tau'}^{(n)}\otimes J_{\psi'_{0,v_{\alpha}}}({\sigma}^{\omega_b^n}).
$$

For the case that $J_\delta$ splits, we note that $\wt m=\mathrm{Witt}(V)=2n+1$, and $\mathrm{Witt}(k\cdot y_{-\alpha}+V_0)=0$.
Applying  Proposition \ref{Jacquet module 1} (1) for $j=2n$ and $\ell=n$, we see that
$$
J_{\psi_{n,\alpha}}(\pi_{\tau\otimes \sigma})\prec ind_{Q_{n}'}^{L_{n,\alpha}}|\det(\cdot )|^{\frac{1-n}{2}}{\tau'}^{(n)}\otimes J_{\psi'_{0,\alpha}}(\sigma^{\omega_b^n}).
$$
Note that $\displaystyle{|\det(\cdot)|^{\frac{1-n}{2}}{\tau'}^{(n)}=\Ind_{B_{\GL_{n}}}^{\GL_{n}}\mu_1\otimes \cdots \otimes \mu_{n}}$, and
the two Jacquet modules $J_{\psi'_{0,v_{\alpha}}}({\sigma}^{\omega_b^n})$ and $J_{\psi'_{0,\alpha}}(\sigma^{\omega_b^n})$ are
just the restriction of $\sigma^{\omega_b^n}$ of $\SO^\delta_2(k)$ to the trivial group. Therefore, the result follows.
\end{proof}

\subsection{Cuspidality of the twisted automorphic descent}

Now we go back to the global case and show that the twisted automorphic descent $\sigma_{\psi_{n,\alpha}}(\CE_{\tau\otimes\sigma})$
is cuspidal (but it may be zero). This result is a combination of the tower property of descent (\cite[Chapter 5]{GRS11})
and the local result of the previous subsection.

\begin{prop}\label{cuspidality}
For all $\ell>n$, the $\psi_{\ell,w_0}$-Fourier coefficients of the
residual representation $\CE_{\tau\otimes\sigma}$ are zero for all anisotropic vector $w_0\in W_\ell$. In particular, the
twisted automorphic descent $\sigma_{\psi_{n,\alpha}}(\CE_{\tau\otimes\sigma})$ is cuspidal.
\end{prop}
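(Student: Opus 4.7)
The plan is to combine the local vanishing of twisted Jacquet modules established in Proposition \ref{local theory} with the tower property argument from \cite[Chapter 5]{GRS11}.

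First I would address the vanishing claim for $\ell > n$ by a standard local-global argument. Given an anisotropic $w_0 \in W_\ell$, by Remark \ref{rmk on anisotropic vector} and conjugation inside $M_\ell(F)$ we may reduce, up to a square class, to the explicit test vectors handled in the local analysis: namely $w_0 = y_\alpha$ when $n < \ell < 2n$ and $w_0 \in V_0$ when $\ell = 2n$. Choose a finite place $v$ of $F$ at which $\tau_v$ and $\sigma_v$ are unramified and at which the splitting hypotheses of Proposition \ref{local theory} are satisfied; such places exist by Chebotarev density. The local component $\CE_{\tau \otimes \sigma, v}$ is an unramified constituent of the induced representation $\pi_{\tau_v \otimes \sigma_v}$ of the previous subsection. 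A nonzero global $\psi_{\ell, w_0}$-Fourier coefficient on $\CE_{\tau \otimes \sigma}$ would produce a nonzero vector in the local twisted Jacquet module $J_{\psi_{\ell, w_0}}(\CE_{\tau \otimes \sigma, v})$, which is a quotient of $J_{\psi_{\ell, w_0}}(\pi_{\tau_v \otimes \sigma_v})$. The latter vanishes for all $\ell > n$ by Proposition \ref{local theory}, forcing the global Fourier coefficient to vanish identically.

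For the cuspidality of $\sigma_{\psi_{n,\alpha}}(\CE_{\tau \otimes \sigma})$, I would compute the constant term of $f^{\psi_{n,\alpha}}$ along the unipotent radical $N'$ of each proper standard parabolic subgroup $P' = M' N'$ of $L_{n,\alpha} = \SO^{\delta,\alpha}_{2n+1}$. Following the tower property argument of \cite[Chapter 5]{GRS11}, one interchanges orders of integration and applies the standard root-exchange identities to rewrite each such constant term as a sum of integrals, each of which is a $\psi_{\ell', w_0'}$-Fourier coefficient of $\CE_{\tau \otimes \sigma}$ for some $\ell' > n$ and some anisotropic $w_0' \in W_{\ell'}$. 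By the first part of the proposition, all such inner coefficients vanish, so every constant term of $f^{\psi_{n,\alpha}}$ along a proper parabolic of $L_{n,\alpha}$ is zero, giving the cuspidality of the twisted descent.

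The main obstacle is the bookkeeping in the root-exchange step of the tower property calculation: one must verify carefully that after all unfoldings and exchanges of unipotent subgroups, the inner integrals that arise are indeed the higher-rank $\psi_{\ell', w_0'}$-Fourier coefficients of $\CE_{\tau \otimes \sigma}$ with $\ell' > n$, so that the local Jacquet module vanishing from the first part can be applied. Once this identification is in place, the proposition follows.
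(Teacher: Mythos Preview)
Your approach matches the paper's in both structure and substance: local vanishing of Jacquet modules for the first claim, then the tower property of \cite[Chapter 5]{GRS11} for cuspidality. The first half is fine (the appeal to Chebotarev is unnecessary---at almost all finite places everything is unramified with odd residue characteristic, and then the ternary form $J_{\delta,\alpha}$ is automatically isotropic, so Proposition~\ref{local theory} applies; but this is harmless).

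There is, however, a real gap in your cuspidality argument. You write that the constant term of $f^{\psi_{n,\alpha}}$ along a maximal parabolic of $L_{n,\alpha}$ unfolds into a sum of integrals ``each of which is a $\psi_{\ell',w_0'}$-Fourier coefficient of $\CE_{\tau\otimes\sigma}$ for some $\ell' > n$,'' and you frame the remaining work as bookkeeping. That is not what the tower property gives you directly. The relevant result, \cite[Theorem 7.3]{GRS11}, expresses the constant term $c_p(f^{\psi_{n,\alpha}})$ as a sum of integrals of $f^{\psi_{n+p,\alpha}}$ \emph{only under the auxiliary hypothesis}
\[
(f^{U_{p-i}})^{\psi_{n+i,\alpha}} = 0 \qquad (0 \le i \le p-1),
\]
where $U_j$ is the unipotent radical of the parabolic $Q_j \subset H^\delta$ with Levi $\GL_j \times \SO(W_j)$. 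These intermediate terms are Fourier coefficients of \emph{constant terms} of $f$ along the $Q_j$, not of $f$ itself, so Part~(1) does not kill them (note in particular the term with $i=0$ involves $\psi_{n,\alpha}$, not a higher index). The paper disposes of them by a separate argument: the Eisenstein series defining $\CE_{\tau\otimes\sigma}$ has cuspidal support only on the maximal parabolic $Q_{2n}$, so $E^{U_j}$ vanishes identically for $1 \le j \le p \le n < 2n$. This cuspidal-support step is the missing ingredient in your sketch; once it is inserted, the rest of your outline goes through exactly as in the paper.
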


\begin{proof}
We will show this proposition using the vanishing properties of the corresponding twisted Jacquet modules we have studied in the previous subsection. By Remark \ref{rmk on anisotropic vector},
we see that up to conjugation, we only need to consider the $\psi_{\ell,w_0}$-coefficients for $w_0=y_\alpha$ ($\alpha\in F^\times$) or $w_0\in V_0$.
Let $v$ be a finite local place of the number field $F$ at which all data involved in the $\psi_{\ell,w_0}$-Fourier coefficients of $\CE_{\tau\otimes\sigma}$ is unramified. The unramified local component at $v$ of the residual representation
$\CE_{\tau\otimes\sigma}$ is denoted by $\CI_{\tau_v\otimes\sigma_v}$. For any integer $\ell>n$, if $\CE_{\tau\otimes\sigma}$ has nonzero $\psi_{\ell,w_0}$-Fourier coefficients, then the corresponding local Jacquet module
$J_{\psi_{\ell,w_0}}(\CI_{\tau_v\otimes\sigma_v})$ is nonzero. But the latter is zero according to Proposition \ref{local theory}.

Now we use the tower property as in \cite{GRS11} to prove the cuspidality of the twisted automorphic descent
$\sigma_{\psi_{n,\alpha}}(\CE_{\tau\otimes\sigma})$ (here $\ell=n$, and $w_0=y_{\alpha}$ by our choice). The idea is to calculate all constant terms along the unipotent radical of maximal
parabolic subgroups of $\SO^{\delta,\alpha}_{2n+1}$. Since the argument is inductive, we consider more general situation as the book
\cite{GRS11} did for the tower property.

 We denote the Witt index of the space $W_\ell\cap y_\alpha^{\perp}$ by $\wt m_{\ell,\alpha}$.
 For $1\leq p \leq \wt m_{\ell,\alpha}$, let $Q_p^*$ be the standard maximal parabolic
 subgroup of $L_{\ell,\alpha}$, which preserves the totally isotropic subspace
 $V_{\ell,p}^+=\Span\{e_{\ell+1},\cdots, e_{\ell+p}\}\cap y_\alpha^{\perp}.$
 Denote by $U_p^*$ the unipotent radical of $Q_p^*$. For the $\psi_{\ell,\alpha}$-Fourier coefficients
 $f^{\psi_{\ell,\alpha}}$ for $f\in \CE_{\tau\otimes\sigma}$, we denote by
 $$
 c_p(f^{\psi_{\ell,\alpha}})=\int_{U_p^*(F)\bs U_p^*(\BA)}f^{\psi_{\ell,\alpha}}(u)\ \mathrm{d} u
 $$
 its constant term along $U_p^*$. By \cite[Theorem 7.3]{GRS11}, the constant term
 $c_p(f^{\psi_{n,\alpha}})$ is expressed as a sum of integrals of $f^{\psi_{{n+p},\alpha}}$ if we have
 \begin{equation}\label{condition-cuspidality}
 (f^{U_{p-i}})^{\psi_{{n+i},\alpha}}=0. \qquad (0\leq i\leq p-1)
 \end{equation}
Recall that $U_{j}$ is the unipotent subgroup of the parabolic subgroup $Q_j$ of $H$, which preserves the isomorphic subspace
$V_j^+=\Span\{e_1,\cdots, e_j\}$ (see \S 2.1).

We claim that the condition (\ref{condition-cuspidality}) holds. In fact, by the cuspidal support of the Eisenstein series
$E(h,s,\phi_{\tau\otimes\sigma})$, which produces the residual representation $\CE_{\tau\otimes\sigma}$, the constant term
$E^{U_j}(h,s,\phi_\pi)$ is always zero for any $1\leq j\leq p$ and $1\leq p\leq n$. This implies the condition (\ref{condition-cuspidality}).

It follows that the constant term $c_p(f^{\psi_{n,\alpha}})$ is a sum of integrals of $f^{\psi_{{n+p},\alpha}}$, which are always zero by
the discussion at the beginning of this proof. Therefore, all the constant terms $c_p(f^{\psi_{n,\alpha}})$ are zero, which implies that
the twisted automorphic descent $\sigma_{\psi_{n,\alpha}}(\CE_{\tau\otimes\sigma})$ is cuspidal.

\end{proof}


\section{Certain Fourier Coefficients of the Residual Representation}\label{sec:FC}


We prove Parts (3), (5) and (6) of Theorem \ref{Main theorem of descent} in this section. Before that, we need to establish
certain properties of the Fourier coefficients of the residual representation $\CE_{\tau \otimes \sigma}$.
We are going to adapt the notation and the arguments used in \cite{JL15} in the proofs given in this section. In particular,
Lemma 2.5 of \cite{JL15} (see also \cite[Corollary 7.1]{GRS11}) is a technical key, which will be used many times in the proofs.


In this section, we will let $E_{i,j}$ be the unipotent subgroup of $\SO^{\delta}_{4n+2}$ consists of elements $u$ with 1's on the diagonal, and for $k\neq \ell$, $u_{k,\ell}=0$ unless $(k,\ell)=(i,j)$ or $(k,\ell)=(4n+3-j,4n+3-i)$. For $x \in F$, let $E_{i,j}(x)$ be the element $u \in E_{i,j}$ with $u_{i,j}=x$. 

\subsection{Fourier coefficients associated to the partition $[(2n)^21^2]$}
We first show that the residual representation $\CE_{\tau \otimes \sigma}$ has a nonzero Fourier coefficient associated to the
partition $[(2n)^21^2]$, based on the cuspidal support of $\CE_{\tau \otimes \sigma}$,
which serves a base for Parts (3), (5) and (6) of Theorem \ref{Main theorem of descent}.

\begin{prop}\label{thm1}
The residual representation $\CE_{\tau \otimes \sigma}$ has a nonzero Fourier coefficient attached to the partition $[(2n)^21^2]$.
\end{prop}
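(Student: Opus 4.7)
The plan is to realize $\CE_{\tau \otimes \sigma}$ as residues of Eisenstein series, unfold a well-chosen Fourier coefficient attached to $[(2n)^2 1^2]$ against it, and reduce the question to the nonvanishing of a Whittaker coefficient of $\tau$.

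First I would write down the unipotent subgroup $N_{[(2n)^2 1^2]} \subset H^\delta$ and the generic character $\psi_{[(2n)^2 1^2]}$ attached to $[(2n)^2 1^2]$ via a standard $\mathfrak{sl}_2$-triple; concretely $N_{[(2n)^2 1^2]}$ is essentially the unipotent radical of the parabolic $P_{2n-1}$ preserving the flag $V_1^+ \subset \cdots \subset V_{2n-1}^+$, and $\psi_{[(2n)^2 1^2]}$ is a Whittaker-type character on the $\GL$-stages that is appropriately twisted at the deepest level by a nondegenerate character pairing the $e_{2n}$-line against a vector in $V_0$. This is the analog, adapted to the quasi-split form $H^\delta$, of the Fourier coefficients appearing in \cite[\S 7]{GRS11}.

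Since $\CE_{\tau \otimes \sigma}$ is the residue at $s=1/2$ of the Eisenstein series $E(\cdot, s, \phi_{\tau \otimes \sigma})$ by Proposition \ref{jlz13}, I would bring the $\psi_{[(2n)^2 1^2]}$-integration inside the Eisenstein series (for $\Re(s) \gg 0$, where the series converges absolutely), and then unfold through the Bruhat decomposition $P(F) \bs H^\delta(F) / N_{[(2n)^2 1^2]}(F)$. Cuspidality of $\tau$ on the Levi $M \simeq \GL_{2n} \times \SO^\delta_2$ kills every double coset whose contribution factors through a proper parabolic of $\GL_{2n}$, leaving only the open double coset. A root exchange in the spirit of \cite[Chapter 7]{GRS11} and \cite[Lemma 2.5]{JL15} then converts the surviving integral into the product of a Whittaker coefficient of $\lambda_s \phi_{\tau \otimes \sigma}$ against the standard Whittaker character of $\GL_{2n}$, with an auxiliary pairing on $\SO^\delta_2$ involving $\sigma$. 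Passing to the residue at $s=1/2$ and using that $\tau$ is generic (as the functorial lift of the generic cuspidal $\pi_0$ on the split $\SO_{2n+1}$, explained in the introduction), its Whittaker coefficient is nonvanishing, and a judicious choice of $\phi_{\tau \otimes \sigma}$ makes the $\SO^\delta_2$-integral nonzero.

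The main obstacle I anticipate is the unfolding combined with the root exchange: one must enumerate the $P(F) \bs H^\delta(F) / N_{[(2n)^2 1^2]}(F)$-orbits precisely, verify that the cuspidal vanishing argument kills all but one orbit, and then justify carefully the interchange of residue and integration using absolute convergence. The fact that $H^\delta = \SO^\delta_{4n+2}$ is only quasi-split (owing to the anisotropic plane $V_0$) adds a further subtlety in choosing a representative for the open double coset so that the resulting matrix coefficient pairs cleanly with $\sigma$, rather than with some nontrivial twist of it.
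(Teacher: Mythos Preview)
Your high-level endpoint is right—one ultimately reduces to the genericity of $\tau$ applied to the constant term $\varphi^U$—but your route to get there is not the paper's, and your sketch has two concrete problems.

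First, the description of the unipotent data is off. The group $V_{[(2n)^21^2],2}$ is \emph{not} the unipotent radical of $P_{2n-1}$; it is the subgroup determined by the $\geq 2$ eigenspaces of the semisimple element
\[
\CH_{[(2n)^21^2]}(t)=\diag(t^{2n-1},\ldots,t^{1-2n},1,1,t^{2n-1},\ldots,t^{1-2n}),
\]
and the character is $\psi(v_{1,2}+\cdots+v_{2n-1,2n})$ on the first $\GL_{2n}$ block—there is no pairing of the $e_{2n}$-line against $V_0$. The $1^2$ in the partition is precisely the anisotropic $\SO_2^\delta$ piece sitting untouched. This matters because your proposed open-cell representative and the ``auxiliary pairing on $\SO_2^\delta$ involving $\sigma$'' are premised on a character that does not occur.

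Second, and more seriously, the paper does not unfold the Eisenstein series at all. It works directly with $\varphi\in\CE_{\tau\otimes\sigma}$ and reaches the Whittaker integral in \eqref{thm1equ11} by an iterated process of root exchange (\cite[Corollary~7.1]{GRS11}) combined with Fourier expansions along one-dimensional root subgroups $E_{i,4n+2-i}$. The non-trivial terms in those expansions are killed by Lemma~\ref{vanishinglem}, whose proof uses the vanishing of Fourier coefficients attached to $[(2i+3)1^{4n-2i-1}]$ for $n\leq i\leq 2n-2$—that is, Proposition~\ref{cuspidality} (Part~(1) of the main theorem). A further Fourier expansion along $E_{2n,2n+1}E_{2n,2n+2}$ produces generic coefficients, killed because $\CE_{\tau\otimes\sigma}$ is non-generic. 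Your unfolding-plus-cuspidality plan makes no use of either Proposition~\ref{cuspidality} or non-genericity, and it is not clear that the double-coset enumeration $P(F)\bs H^\delta(F)/V_{[(2n)^21^2],2}(F)$ collapses by cuspidality of $\tau$ alone: the group $V_{[(2n)^21^2],2}$ is not contained in $U$ nor does it contain $U$, so the cells do not factor cleanly through parabolics of the $\GL_{2n}$ Levi in the way your sketch presumes. The paper's method trades that orbit analysis for the already-established vanishing input from Section~2.
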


\begin{proof}
By \cite[I.6]{W01}, there is only one $F$-rational nilpotent orbit in each $F$-stable nilpotent orbit in the Lie algebra
of $\SO^{\delta}_{4n+2}(F)$
attached to the partition $[(2n)^21^2]$. Moreover, a nilpotent element in the only $F$-rational nilpotent orbit can
be chosen as follows.

Let $X_{[(2n)^21^2]} = \sum_{i=1}^{n-1} E_{i+1,i}(\frac{1}{2})$. Then $X_{[(2n)^21^2]}$ is a
representative of the $F$-rational orbit corresponding to the partition $[(2n)^21^2]$. To define the Fourier coefficients attached to
the partition $[(2n)^21^2]$, we introduce a one-dimensional toric subgroup $\CH_{[(2n)^21^2]}$ as follows: for $t \in F^{\times}$,
\begin{equation}\label{equtorus}
\CH_{[(2n)^21^2]}(t):=\diag(t^{2n-1}, t^{2n-3}, \ldots, t^{1-2n}, 1, 1, t^{2n-1}, t^{2n-3}, \ldots, t^{1-2n}).
\end{equation}
It is easy to see that under the adjoint action,
$$\Ad(\CH_{[(2n)^21^2]}(t))(X_{[(2n)^21^2]}) = t^{-2} X_{[(2n)^21^2]}, \forall t \in F^{\times}.$$

Let $\mathfrak{g}$ be the Lie algebra of $\SO^{\delta}_{4n+2}(F)$. Under the
adjoint action of $\mathcal{H}_{[(2n)^21^2]}$,
$\mathfrak{g}$ has the following direct sum decomposition into
$\mathcal{H}_{[(2n)^21^2]}$-eigenspaces:
\begin{equation}\label{ssd}
\mathfrak{g} = \mathfrak{g}_{-m} \oplus \cdots \oplus \mathfrak{g}_{-2} \oplus \mathfrak{g}_{-1} \oplus \mathfrak{g}_{0} \oplus \mathfrak{g}_{1} \oplus \mathfrak{g}_{2} \oplus \cdots \oplus \mathfrak{g}_{m},
\end{equation}
for some positive integer $m$, where
$\mathfrak{g}_{l}:= \{X \in \mathfrak{g}\ |\ \Ad(\mathcal{H}_{[(2n)^21^2]}(t))(X) = t^{l}X\}$.
Let $V_{[(2n)^21^2], j}$ ($j=1, \ldots, m$) be the unipotent
subgroup of $\SO^{\delta}_{4n+2}(F)$ with Lie algebra $\oplus_{l=j}^m \mathfrak{g}_l$.
Let $L_{[(2n)^21^2]}$ be the algebraic subgroup of $\SO^{\delta}_{4n+2}(F)$
with Lie algebra $\mathfrak{g}_{0}$.
Then we define a character of $V_{[(2n)^21^2], 2}$ as follows:
for $v \in V_{[(2n)^21^2], 2}(\BA)$ and for a nontrivial character $\psi$ of $F\bks\BA$,
\begin{align}\label{ch}
\begin{split}
\psi_{[(2n)^21^2]}(v) := & \ \psi(\tr(X_{[(2n)^21^2]}\log(v)))\\
= & \ \psi(v_{1,2}+v_{2,3}+\cdots+v_{2n-1, 2n}).
\end{split}
\end{align}

For an arbitrary automorphic form $\varphi$ on $\SO^{\delta}_{4n+2}(\BA)$, the $\psi_{[(2n)^21^2]}$-Fourier coefficient
of $\varphi$ is defined by
\begin{equation}\label{fc}
\varphi^{\psi_{[(2n)^21^2]}}(g):=\int_{[V_{[(2n)^21^2], 2}]}
\varphi(vg) \psi^{-1}_{[(2n)^21^2]}(v) \mathrm{d}v,
\end{equation}
where for a group $G$, we denote the quotient $G(F) \bs G(\BA)$ simply by $[G]$.
When an irreducible automorphic representation $\pi$ of $\SO^{\delta}_{4n+2}(\BA)$ is generated by automorphic forms $\varphi$,
we say that $\pi$ has a nonzero $\psi_{[(2n)^21^2]}$-Fourier coefficient or
a nonzero Fourier coefficient attached to $[(2n)^21^2]$ if there exists an
automorphic form $\varphi$ in the space of $\pi$ with a nonzero $\psi_{[(2n)^21^2]}$-Fourier coefficient
$\varphi^{\psi_{[(2n)^21^2]}}(g)$.

Note that by \cite[Section 1.7]{MW87}, $V_{[(2n)^21^2], 1} / \ker_{V_{[(2n)^21^2], 2}}(\psi_{[(2n)^21^2]})$ is a Heisenberg group $V_{[(2n)^21^2], 1} / V_{[(2n)^21^2], 2} \oplus Z$, with center
$$Z = V_{[(2n)^21^2], 2} / \ker_{V_{[(2n)^21^2], 2}}(\psi_{[(2n)^21^2]}).$$
And, one can see that $(E_{2n+1,n+1} E_{n,2n+2}) \oplus (E_{n,2n+1} E_{2n+2,n+1})$ is a complete polarization of $$V_{[(2n)^21^2], 1} / V_{[(2n)^21^2], 2}.$$
Let $Y = E_{2n+1,n+1} E_{n,2n+2}$.

To show that the residual representation $\CE_{\tau \otimes \sigma}$ has a nonzero Fourier coefficient attached to the partition $[(2n)^21^2]$,
we need to show that there is a $\varphi \in \CE_{\tau \otimes \sigma}$, such that $\varphi^{\psi_{[(2n)^21^2]}}$ is non-vanishing. By a similar argument as in
\cite[Lemma 1.1]{GRS03}, $\varphi^{\psi_{[(2n)^21^2]}}$ is non-vanishing if and only if the following integral is non-vanishing:
\begin{equation}\label{addy}
\int_{[V_{[(2n)^21^2], 2}Y]}
\varphi(vyg) \psi^{-1}_{[(2n)^21^2]}(v)\  \mathrm{d}v\mathrm{d}y.
\end{equation}

First, one can see that the following quadruple satisfies all the conditions for \cite[Corollary 7.1]{GRS11} (which is still true for the case of $\SO_{4n+2}^{\delta}(\BA)$):
\begin{equation}\label{changey}
(V_{[(2n)^21^2], 2}X_{E_{n,2n+2}}, \psi_{[(2n)^21^2]}, E_{2n+1,n+1}, E_{n,2n+1}).
\end{equation}
Applying \cite[Corollary 7.1]{GRS11}, the integral in \eqref{addy} is non-vanishing if and only the following integral is non-vanishing
\begin{equation}\label{addy3}
\int_{[V_{[(2n)^21^2], 2}Y']}
\varphi(vyg) \psi^{-1}_{[(2n)^21^2]}(v) \ \mathrm{d}v\mathrm{d}y,
\end{equation}
with $Y' = E_{n,2n+1} E_{n,2n+2}$.

Let $W=V_{[(2n)^21^2], 2}Y'$, the elements in which have the following form:
\begin{equation}\label{thm1equ12}
w=\begin{pmatrix}
z & q_1 & q_2\\
0 & I_2 & q_1^*\\
0 & 0 & z^*
\end{pmatrix}\begin{pmatrix}
I_{2n} & 0 & 0\\
p_1& I_2 & 0\\
p_2 &p_1^*& I_{2n}
\end{pmatrix} \in \SO^{\delta}_{4n+2},
\end{equation}
where $z \in Z_{2n}$, the standard maximal unipotent subgroup of $\GL_{2n}$; $q_1 \in \rm{Mat}_{2n \times 2}$, with $q_1(i,j)=0$,
for $n+1 \leq i \leq 2n$ and $1 \leq j \leq 2$;
$q_2 \in \Mat_{2n \times 2n}$, with
$q_2(i,j)=0$, for $i \geq j$; $p_1 \in \rm{Mat}_{2 \times 2n}$, with $p_1(i,j)=0$ for $1 \leq i \leq 2$ and $1 \leq j \leq n+1$;
$p_2 \in \Mat_{2n \times 2n}$ (with certain additional property we do not specify here), with $p_2(i,j)=0$, for $i \geq j$;
all entries marked with a * are determined by the other entries and symmetry. Let $\psi_W(w) := \psi_{[(2n)^21^2]}(v)$,
for $w=vy \in W$, where $v \in V_{[(2n)^21^2], 2}$ and $y \in Y'$. For $w\in W$ of form in $\eqref{thm1equ12}$,
$\psi_W(w)=\psi(\sum_{i=1}^{2n-1} z_{i,i+1})$.

The idea of proving the non-vanishing property of the integral in \eqref{addy3} is to express the Fourier coefficient in \eqref{addy3} in terms of the coefficient in \eqref{thm1equ10} (which is non-vanishing) and the coefficients in Lemma \ref{vanishinglem} (which are identically zero) using exchange of unipotent subgroups and Fourier expansion. More explicitly, we use the lower triangular part of elements $w$ in \eqref{thm1equ12} to fill the zeros in the upper triangular part. We proceed one row/column at a time, from top to bottom in the upper triangular part, and from left to right in the lower triangular part. For the first $n-1$ rows, the number of zeros in the upper triangular part is exactly equal to the number of nonzero entries in the corresponding column in the lower triangular part. These rows will be treated via exchange of unipotent subgroups, using \cite[Corollary 7.1]{GRS11}.
For the $i=n, \ldots, 2n-1$ rows, the number of zeros in the $i$-th row in the upper triangular part exceeds the number of nonzero entries in the corresponding column in the lower triangular part by one. For the $n$-th row, we first add the constant integral over the one-dimensional subgroup $E_{n,3n+2}$, then do exchange of unipotent subgroups. For the row $i=n+1, \ldots, 2n-1$, we treat each row/column by a two-step process, that is, before exchanging unipotent subgroups for each row/column, one needs to take Fourier expansion along a one-dimensional subgroup $E_{i,4n+2-i}$, producing Fourier coefficients in Lemma \ref{vanishinglem}.

To continue, we define a sequence of unipotent subgroups as follows. Note that we have made a convention on unipotent subgroups in the second paragraph of Section 3.

For $1 \leq i \leq n-1$, $1 \leq j \leq i$, let $X^i_j=E_{i,2n+3+i-j}$ and $Y^i_j=E_{2n+3+i-j,i+1}$.
For $n \leq i \leq 2n-2$, $1 \leq j \leq 2n-i-1$, let $X^i_j=E_{i,4n+2-i-j}$ and $Y^i_j=E_{4n+2-i-j,i+1}$.
For $n+1 \leq i \leq 2n-1$, let $X_i = E_{i,2n+1} E_{i,2n+2}$ and $Y_i = E_{2n+1,i+1} E_{2n+2,i+1}$.

Let $\wt{W}$ be the subgroup of $W$ with elements of the form as in \eqref{thm1equ12}, but with the $p_1$ and $p_2$ parts zero. Let $\psi_{\wt{W}} = \psi_W|_{\wt{W}}$. For any subgroup of $W$ containing $\wt{W}$, we automatically extend $\psi_{\wt{W}}$ trivially to this subgroup and still denote the character by $\psi_{\wt{W}}$.

Next, we apply \cite[Corollary 7.1]{GRS11} (exchanging unipotent subgroups) to a sequence of quadruples. For $i$ going from 1 to $n-1$,
the following sequence of quadruples satisfy all the conditions for \cite[Corollary 7.1]{GRS11}:
\begin{align}\label{thm1equ20}
\begin{split}
& (\wt{W}_i \prod_{j=2}^i Y^i_j, \psi_{\wt{W}}, X^i_1, Y^i_1),\\
& (X^i_1 \wt{W}_i \prod_{j=3}^i Y^i_j, \psi_{\wt{W}}, X^i_2, Y^i_2),\\
& \cdots,\\
& (\prod_{j=1}^{k-1} X^i_j \wt{W}_i \prod_{j=k+1}^i Y^i_j, \psi_{\wt{W}}, X^i_k, Y^i_k),\\
& \cdots,\\
& (\prod_{j=1}^{i-1} X^i_j \wt{W}_i, \psi_{\wt{W}}, X^i_{i}, Y^i_{i}),
\end{split}
\end{align}
where
$$\wt{W}_i = \prod_{s=1}^{i-1} \prod_{j=1}^{s} X^s_j \wt{W} \prod_{t=n+1}^{2n-1} Y_t \prod_{j=1}^{2n-t-1} Y^t_j \prod_{k=i+1}^{n} \prod_{j=1}^{k} Y^k_j.$$
Applying \cite[Corollary 7.1]{GRS11} repeatedly to the above sequence of quadruples, one obtains that the integral in \eqref{addy3} is non-vanishing if and only if the following integral is non-vanishing:
\begin{equation}\label{thm1equ3}
\int_{[\wt{W}_{i}']} \varphi(wg) \psi^{-1}_{\wt{W}_{i}'}(w)\  \mathrm{d}w,
\end{equation}
where
\begin{equation}\label{thm1equ7}
\wt{W}_{i}' = \prod_{s=1}^{i} \prod_{j=1}^{s} X^s_j \wt{W} \prod_{t=n+1}^{2n-1} Y_t \prod_{j=1}^{2n-t-1} Y^t_j \prod_{k=i+1}^{n} \prod_{j=1}^{k} Y^k_j,
\end{equation}
and $\psi_{\wt{W}_{i}'}$ is extended from
$\psi_{\wt{W}}$ trivially.

To show the integral over $\wt{W}_{n-1}'$ in \eqref{thm1equ3} is non-vanishing, it suffices to show that the following integral is non-vanishing:
\begin{equation}\label{thm1equ13}
\int_{[E_{n,3n+2}]} \int_{[\wt{W}_{n-1}']} \varphi(wng) \psi^{-1}_{\wt{W}_{n-1}'}(w)\  \mathrm{d}w \mathrm{d}n,
\end{equation}
since it factors though the integral over $\wt{W}_{n-1}'$ in \eqref{thm1equ3}.
One can see that the following sequence of quadruples satisfy all the conditions for \cite[Corollary 7.1]{GRS11}:
\begin{align}\label{thm1equ14}
\begin{split}
& (E_{n,3n+2}\wt{W}_n \prod_{j=2}^{n-1} Y^n_j, \psi_{\wt{W}}, X^n_1, Y^n_1),\\
& (X^n_1 E_{n,3n+2}\wt{W}_n \prod_{j=3}^{n-1} Y^n_j, \psi_{\wt{W}}, X^n_2, Y^n_2),\\
& \cdots,\\
& (\prod_{j=1}^{k-1} X^n_j E_{n,3n+2}\wt{W}_n \prod_{j=k+1}^{n-1} Y^n_j, \psi_{\wt{W}}, X^n_k, Y^n_k),\\
& \cdots,\\
& (\prod_{j=1}^{n-2} X^n_j E_{n,3n+2}\wt{W}_n, \psi_{\wt{W}}, X^n_{n-1}, Y^n_{n-1}),
\end{split}
\end{align}
where
$$\wt{W}_n =
\prod_{t=1}^{n-1} \prod_{j=1}^{t} X^t_j \wt{W} \prod_{k=n+1}^{2n-1} Y_k \prod_{j=1}^{2n-k-1} Y^k_j.$$
Applying \cite[Corollary 7.1]{GRS11} repeatedly to the above sequence of quadruples, the integral in \eqref{thm1equ13} is non-vanishing if and only if the following integral is non-vanishing:
\begin{equation}\label{thm1equ15}
\int_{[\wt{W}_{n}']} \varphi(wg) \psi^{-1}_{\wt{W}_{n}'}(w) \ \mathrm{d}w,
\end{equation}
where
\begin{equation}\label{thm1equ16}
\wt{W}_{n}' = E_{n,3n+2} \prod_{j=1}^{n-1} X^n_j
\prod_{t=1}^{n-1} \prod_{j=1}^{t} X^t_j \wt{W} \prod_{k=n+1}^{2n-1} Y_k \prod_{j=1}^{2n-k-1} Y^k_j,
\end{equation}
and $\psi_{\wt{W}_{n}'}$ is extended from
$\psi_{\wt{W}}$ trivially.

For $i$ going from $n+1$ to $2n-1$, define
\begin{equation}\label{thm1equ17}
\wt{W}_{i}' = \prod_{s=n+1}^{i} X_s \prod_{j=1}^{2n-s-1} X^s_j \prod_{\ell=n}^{i} E_{\ell, 4n+3-\ell}
\prod_{t=1}^{n-1} \prod_{j=1}^{t} X^t_j \wt{W} \prod_{k=i+1}^{2n-1} Y_k \prod_{j=1}^{2n-k-1} Y^k_j,
\end{equation}
and $\psi_{\wt{W}_{i}'}$ is extended from
$\psi_{\wt{W}}$ trivially.
We claim that the following integral is non-vanishing:
\begin{equation}\label{thm1equ18}
\int_{[\wt{W}_{i-1}']} \varphi(wg) \psi^{-1}_{\wt{W}_{i-1}'}(w) \ \mathrm{d}w,
\end{equation}
if and only if the following integral is non-vanishing:
\begin{equation}\label{thm1equ19}
\int_{[\wt{W}_{i}']} \varphi(wg) \psi^{-1}_{\wt{W}_{i}'}(w) \ \mathrm{d}w,
\end{equation}

Indeed, first, we take the Fourier expansion of the integral in \eqref{thm1equ18} along $E_{i,4n+2-i}$. Under the action of $\GL_1$, we get two kinds of Fourier coefficients corresponding
to the two orbits of the dual of $[E_{i,4n+2-i}]$: the trivial one and the non-trivial one. By Lemma \ref{vanishinglem}, which will be stated and proved right after the proof of the theorem,
all the Fourier coefficients corresponding to the non-trivial orbit are identically zero. Therefore only the Fourier coefficient attached to the trivial orbit survives. When $i=2n-1$, the Fourier coefficient attached to the trivial orbit is exactly the integral in \eqref{thm1equ19}, that is, the claim is proved. When $n+1 \leq i \leq 2n-2$, one can see that the following sequence of quadruples satisfy all the conditions for \cite[Corollary 7.1]{GRS11}:
\begin{align*}
& (E_{i,4n+2-i}\wt{W}_i Y_i \prod_{j=2}^{2n-i-1} Y^i_j, \psi_{\wt{W}}, X^i_1, Y^i_1),\\
& (X^i_1 E_{i,4n+2-i}\wt{W}_i Y_i \prod_{j=3}^{2n-i-1} Y^i_j, \psi_{\wt{W}}, X^i_2, Y^i_2),\\
& \cdots,\\
& (\prod_{j=1}^{k-1} X^i_jE_{i,4n+2-i}\wt{W}_i Y_i  \prod_{j=k+1}^{2n-i-1} Y^i_j, \psi_{\wt{W}}, X^i_k, Y^i_k),\\
& \cdots,\\
& (\prod_{j=1}^{2n-i-2} X^i_j E_{i,4n+2-i}\wt{W}_i Y_i , \psi_{\wt{W}}, X^i_{2n-i-1}, Y^i_{2n-i-1}),\\
& (\prod_{j=1}^{2n-i-1} X^i_j E_{i,4n+2-i} \wt{W}_i, \psi_{\wt{W}}, X_i, Y_i),
\end{align*}
where
$$\wt{W}_i =\prod_{s=n+1}^{i-1} X_s \prod_{j=1}^{2n-s-1} X^s_j \prod_{\ell=n}^{i-1} E_{\ell,4n+2-\ell}
\prod_{t=1}^{n-1} \prod_{j=1}^{t} X^t_j \wt{W} \prod_{k=i+1}^{2n-1} Y_k \prod_{j=1}^{2n-k-1} Y^k_j.$$
Applying \cite[Corollary 7.1]{GRS11} repeatedly to the above sequence of quadruples, we deduce that the Fourier coefficient attached to the trivial orbit above is non-vanishing if and only if the integral in \eqref{thm1equ19} is non-vanishing. This proves the claim.

It is easy to see that elements of $\wt{W}_{2n-1}'$ have the following form:
\begin{equation}\label{thm1equ6}
w=\begin{pmatrix}
z & q_1 & q_2\\
0 & I_2 & q_1^*\\
0 & 0 & z^*
\end{pmatrix},
\end{equation}
where $z \in Z_{2n}$, the standard maximal unipotent subgroup of $\GL_{2n}$; $q_1 \in \rm{Mat}_{2n \times 2}$, with $q_1(2n,j)=0$, for $1 \leq j \leq 2$;
$q_2 \in \Mat_{2n \times 2n}$ with certain symmetry property we do not specify here; all entries marked with a $*$ are determined by the other entries and symmetry. For $w\in {\wt{W}_{2n-1}'}$ of form in $\eqref{thm1equ6}$,
$\psi_{\wt{W}_{2n-1}'}(w)=\psi(\sum_{i=1}^{2n-1} z_{i,i+1})$.

Now, we need to take the Fourier expansion of the integral over $\wt{W}_{2n-1}'$ in \eqref{thm1equ19} along
$$E_{2n,2n+1}E_{2n,2n+2},$$
which can be identified with a certain quadratic extension $E$ of $F$, and a torus which can be identified with $F^{\times} \times (E^{\times})^1$ acting on it. Here $(E^{\times})^1 \subset E^{\times}$ is the kernel of the norm map to $F^{\times}$.
This Fourier expansion gives us two kinds of Fourier coefficients corresponding to the two orbits of characters: the trivial one and the non-trivial one, and the Fourier coefficients corresponding to the non-trivial orbit are generic Fourier coefficients.
Since $\CE_{\tau \otimes \sigma}$ is not generic, only the Fourier coefficient corresponding to the trivial orbit survives. Therefore,
the integral over $\wt{W}_{2n-1}'$ in \eqref{thm1equ19} becomes
\begin{align}\label{thm1equ10}
\begin{split}
& \int_{[E_{2n,2n+1}E_{2n,2n+2}]} \int_{[\wt{W}_{2n-1}']} \varphi(wxg) \psi^{-1}_{\wt{W}_{2n-1}'}(w)\  \mathrm{d}w\mathrm{d}x\\
= & \ \int_{N_{2n}} \varphi(ng) \psi^{-1}_{N_{2n}}(n) \ \mathrm{d}n,
\end{split}
\end{align}
where $N_{2n}$ is the unipotent radical of the parabolic subgroup with Levi isomorphic to $\GL_1^{2n} \times \SO_2^{\delta}$,
and $\psi_{N_{2n}}(n)=\psi(\sum_{i=1}^{2n-1} n_{i,i+1})$.

Let $U$ be the unipotent radical of the parabolic subgroup $P=MU$ with Levi $M \cong \GL_{2n} \times \SO_2^{\delta}$. Write $N_{2n} = U N_{2n}'$ with $N_{2n}'=M \cap N_{2n}$.
Then the integral in \eqref{thm1equ10} becomes
\begin{equation}\label{thm1equ11}
\int_{[N_{2n}']} \varphi^U(ng) \psi^{-1}_{N_{2n}'}(n)\ \mathrm{d}n,
\end{equation}
where $\varphi^U$ is the constant term of $\varphi$ along $U$,
and $\psi_{N_{2n}'}=\psi_{N_{2n}}|_{N_{2n}'}$.
By a similar calculation as in \cite{JLZ13},
one can see that
$\varphi^U \in A(U(\BA)M(F)\bs \SO^{\delta}_{4n+2}(\BA))_{\lvert \cdot \rvert^{-\frac{1}{2}} \tau \otimes \sigma}$.
Since $\tau$ is generic, it follows that the integral in \eqref{thm1equ11} is non-vanishing.
Therefore, the integral in \eqref{addy}, hence the Fourier coefficient attached to $[(2n)^21^2]$ in \eqref{fc} is non-vanishing.

This completes the proof of the proposition.
\end{proof}

Now we prove the lemma used in the above proof.

\begin{lem}\label{vanishinglem}
For any $\varphi \in \CE_{\tau \otimes \sigma}$, the following integral is identically zero:
\begin{equation}\label{vanishinglemequ1}
\int_{[E_{i+1,4n+1-i}]}\int_{[\wt{W}_{i}']} \varphi(wxg) \psi^{-1}_{\wt{W}_{i}'}(w) \psi^{-1}(ax)\ \mathrm{d}w\mathrm{d}x,
\end{equation}
where $n \leq i \leq 2n-2$, $a \in F^{\times}$,
$\wt{W}_{n}'$ is as in \eqref{thm1equ16} with $i=n$ there,
and $\wt{W}_{i}'$ is as in \eqref{thm1equ17} for $n+1 \leq i \leq 2n-2$.
\end{lem}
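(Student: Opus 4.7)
The plan is to show that for each $i$ with $n \leq i \leq 2n-2$ and each $a \in F^\times$, the integral in \eqref{vanishinglemequ1} can be expressed, via a sequence of root exchanges, as a Fourier coefficient of $\varphi$ attached to a nilpotent orbit whose corresponding partition $\underline{p}_i$ strictly dominates $[(2n)^2 1^2]$ in the dominance order. Since $\CE_{\tau \otimes \sigma}$ is a residue of an Eisenstein series with cuspidal datum $(\GL_{2n} \times \SO_2^\delta, \tau \otimes \sigma)$, and its global Arthur parameter is $(\tau,2) \boxplus \psi_\sigma$, the wave-front set of $\CE_{\tau\otimes\sigma}$ is bounded above by the partition $[(2n)^2 1^2]$. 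This follows from the standard analysis of Fourier coefficients of residual Eisenstein series, in the spirit of \cite{GRS03} and within the framework of \cite{J14}. Hence any Fourier coefficient of $\CE_{\tau\otimes\sigma}$ attached to a partition strictly dominating $[(2n)^2 1^2]$ must vanish, yielding the lemma.

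Concretely, I would proceed as follows. First, examine the combined character on $\wt{W}_i' \cdot E_{i+1,4n+1-i}$: it is given by the standard $z_{j,j+1}$-pieces inherited from $\psi_{[(2n)^2 1^2]}$ together with the new entry $a$ in the position $(i+1, 4n+1-i)$. Identify the associated nilpotent element of the Lie algebra of $\SO_{4n+2}^\delta$ and compute its Jordan type; one checks that the extra root vector has the effect of linking two Jordan blocks together, producing a partition $\underline{p}_i$ that strictly dominates $[(2n)^2 1^2]$ (and one must verify $\underline{p}_i$ is a legitimate $\SO$-partition, i.e.\ that even parts occur with even multiplicity). Second, design a sequence of quadruples, along the lines of those used in the proof of Proposition \ref{thm1}, and apply \cite[Corollary 7.1]{GRS11} repeatedly to rewrite the integral in \eqref{vanishinglemequ1} as the Fourier coefficient attached to $\underline{p}_i$. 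Third, appeal to the wave-front bound described above to conclude that this Fourier coefficient vanishes identically on $\CE_{\tau \otimes \sigma}$.

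The main obstacle will be the second step: the combinatorial bookkeeping of which root subgroups to exchange, in what order, and the verification of the commutator and trivial-intersection hypotheses of \cite[Corollary 7.1]{GRS11} at each stage. The shape of the required sequence depends delicately on $i$ (as the new root $E_{i+1,4n+1-i}$ sits differently relative to the existing subgroups $X^s_j$, $Y^s_j$, $X_s$, $Y_s$ as $i$ varies), and for each $i$ one has to build a compatible chain of root exchanges that isolates the new character $\psi(a x)$ while preserving the remaining structure. This is routine in the descent methodology developed in \cite{GRS11} and further refined in \cite{JL15}, but the verification in the present setting requires careful case-by-case combinatorics, mirroring but extending the analogous steps already carried out in the proof of Proposition \ref{thm1}.
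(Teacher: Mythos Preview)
Your proposal has a genuine gap: the wave-front bound you invoke is false. You assert that the wave-front set of $\CE_{\tau\otimes\sigma}$ is bounded above by $[(2n)^21^2]$, but Part (6) of Theorem \ref{Main theorem of descent} (proved later in the paper) shows that $\CE_{\tau\otimes\sigma}$ has a \emph{nonzero} Fourier coefficient attached to $[(2n+1)(2n-1)1^2]$, and this partition strictly dominates $[(2n)^21^2]$ (compare the first parts: $2n+1>2n$). So any argument that concludes vanishing merely from ``strictly dominating $[(2n)^21^2]$'' cannot be correct. The claim that the bound ``follows from the standard analysis'' in the spirit of \cite{GRS03} or \cite{J14} is not justified; Conjecture 4.2 of \cite{J14} is a conjecture, and in any case predicts a different maximal orbit here.

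The paper's proof avoids this entirely by a different reduction. Instead of trying to land on a partition dominating $[(2n)^21^2]$, the paper uses root exchanges (\cite[Corollary 7.1]{GRS11}) to rewrite the integral so that an inner integral becomes precisely a Bessel-Fourier coefficient attached to the partition $[(2i+3)1^{4n-2i-1}]$. Since $n\le i\le 2n-2$ gives $2i+3\ge 2n+3$, this is a $\psi_{\ell,w_0}$-coefficient with $\ell=i+1>n$, and such coefficients were already shown to vanish in Proposition \ref{cuspidality} via the local Jacquet module calculation (Proposition \ref{local theory}). Note that $[(2i+3)1^{4n-2i-1}]$ is \emph{not} comparable to $[(2n)^21^2]$ in the dominance order for $n\ge 2$, so the paper's vanishing input is logically independent of any wave-front bound of the type you propose. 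In short, the correct target of the root-exchange procedure is a Bessel-type coefficient already controlled by Section 2, not a generic ``bigger partition'' coefficient.
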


\begin{proof}
We continue to use notation introduced in Proposition \ref{thm1}.
First note that elements in $\wt{W}_{i}'$ have the following form
\begin{equation}\label{vanishinglemequ2}
w=\begin{pmatrix}
z & q_1 & q_2\\
0 & I_2 & q_1^*\\
0 & 0 & z^*
\end{pmatrix}\begin{pmatrix}
I_{2n} & 0 & 0\\
p_1& I_2 & 0\\
p_2 &p_1^*& I_{2n}
\end{pmatrix} \in \SO^{\delta}_{4n+2},
\end{equation}
where $z \in Z_{2n}$, the standard maximal unipotent subgroup of $\GL_{2n}$; $q_1 \in \rm{Mat}_{2n \times 2}$, with $q_1(k,j)=0$, for $i + 1 \leq k \leq 2n$ and $1 \leq j \leq 2$;
$q_2 \in \Mat_{2n \times 2n}$, with
$q_2(k,j)=0$, for $i+1\leq k \leq 2n$ and $1 \leq j \leq 2n-i-1$; $p_1 \in \rm{Mat}_{2 \times 2n}$, with $p_1(k,j)=0$ for $1 \leq k \leq 2$ and $1 \leq j \leq i+1$; $p_2 \in \Mat_{2n \times 2n}$, with $p_2(k,j)=0$, for $2n-i \leq k \leq 2n$ or $1 \leq j \leq i+1$; all entries marked with a $*$ are determined by the other entries and symmetry. For $w\in \wt{W}_{i}'$ of form in $\eqref{vanishinglemequ2}$,
$\psi_{\wt{W}_{i}'}(w)=\psi(\sum_{i=1}^{2n-1} z_{i,i+1})$.

In the rest of this proof, we set $\wt{W}_{i}^{(1)}:=\wt{W}_{i}'$, and denote by $\wt{W}_{i}^{(2)}$ the subgroup of $\wt{W}_{i}^{(1)}$
consisting of the elements of the following form
\begin{equation*}
\begin{pmatrix}
I_{i+1} & 0 & 0 & 0 & 0\\
0 & z & 0 & 0 & 0\\
0 & 0 & I_2 & 0 & 0\\
0 & 0 & 0 & z^* & 0\\
0 & 0 & 0 & 0 & I_{i+1}
\end{pmatrix},
\end{equation*}
where $z \in Z_{2n-i-1}$, the standard maximal unipotent subgroup of $\GL_{2n-i-1}$.
Write
$$
\wt{W}_{i}^{(1)} = \wt{W}_{i}^{(2)}\wt{W}_{i}^{(3)},
$$
where $\wt{W}_{i}^{(3)}$ is a subgroup of $\wt{W}_{i}^{(1)}$ consisting elements with
$\wt{W}_{i}^{(2)}$-part being trivial.
Let $\psi_{\wt{W}_{i}^{(2)}} = \psi_{\wt{W}_{i}^{(1)}}|_{\wt{W}_{i}^{(2)}}$
and $\psi_{\wt{W}_{i}^{(3)}} = \psi_{\wt{W}_{i}^{(1)}}|_{\wt{W}_{i}^{(3)}}$.
Then the integral in \eqref{vanishinglemequ1} can be written as
\begin{equation}\label{vanishinglemequ3}
\int_{[\wt{W}_{i}^{(2)}]} \int_{[E_{i+1,4n+1-i}]}\int_{[\wt{W}_{i}^{(3)}]} \varphi(w_1 x w_2 g) \psi^{-1}_{\wt{W}_{i}^{(2)}}(w_2) \psi^{-1}(ax)\psi^{-1}_{\wt{W}_{i}^{(3)}}(w_1)\
\mathrm{d}w_1\mathrm{d}x\mathrm{d}w_2.
\end{equation}
Therefore, to show the integral in \eqref{vanishinglemequ1} is identically zero, it is suffices to show the following integral is identically zero
\begin{equation}\label{vanishinglemequ4}
\int_{[E_{i+1,4n+1-i}]}\int_{[\wt{W}_{i}^{(3)}]} \varphi(wx g) \psi^{-1}(ax)\psi^{-1}_{\wt{W}_{i}^{(3)}}(w)\ \mathrm{d}w\mathrm{d}x.
\end{equation}

To continue, we define a sequence of unipotent subgroups as follows. For $1 \leq j \leq 2n-i-2$, let $X^{i+1}_j=E_{i+1,4n+1-i-j}$ and $Y^{i+1}_j=E_{4n+1-i-j,i+1}$. Let
$X_{i+1} = E_{i+1,2n+1} E_{i+1,2n+2}$ and $Y_{i+1} = E_{2n+1,i+2}E_{2n+2,i+2}$.
Write $\wt{W}_{i}^{(3)} = \wt{W}_{i}^{(4)} \prod_{j=1}^{2n-i-2} Y^{i+1}_j Y_{i+1}$ with $\wt{W}_{i}^{(4)}$ is a subgroup of $\wt{W}_{i}^{(3)}$ consisting of elements with $\prod_{j=1}^{2n-i-2} Y^{i+1}_j Y_{i+1}$-part being trivial.
Let $\psi_{\wt{W}_{i}^{(4)}} = \psi_{\wt{W}_{i}^{(3)}}|_{\wt{W}_{i}^{(4)}}$.
Then, one can see that the following sequence of quadruples satisfy the conditions in \cite[Corollary 7.1]{GRS11}:
\begin{align*}
& (E_{i+1,4n+1-i} \wt{W}_{i}^{(4)} Y_{i+1} \prod_{j=2}^{2n-i-2} Y^{i+1}_j, \psi_{\wt{W}^{(4)}}, X^{i+1}_1, Y^{i+1}_1),\\
& (X^{i+1}_1 E_{i+1,4n+1-i}\wt{W}_{i}^{(4)} Y_{i+1} \prod_{j=3}^{2n-i-2} Y^{i+1}_j, \psi_{\wt{W}^{(4)}}, X^{i+1}_2, Y^{i+1}_2),\\
& \cdots,\\
& (\prod_{j=1}^{k-1} X^{i+1}_j E_{i+1,4n+1-i}\wt{W}_{i}^{(4)} Y_{i+1} \prod_{j=k+1}^{2n-i-2} Y^{i+1}_j, \psi_{\wt{W}^{(4)}}, X^{i+1}_k, Y^{i+1}_k),\\
& \cdots,\\
& (\prod_{j=1}^{2n-i-3} X^{i+1}_j E_{i+1,4n+1-i}\wt{W}_{i}^{(4)} Y_{i+1}, \psi_{\wt{W}^{(4)}}, X^{i+1}_{2n-i-2}, Y^{i+1}_{2n-i-2}),\\
& (\prod_{j=1}^{2n-i-2} X^{i+1}_j E_{i+1,4n+1-i}\wt{W}_{i}^{(4)}, \psi_{\wt{W}^{(4)}}, X_{i+1}, Y_{i+1}).
\end{align*}
After applying \cite[Corollary 7.1]{GRS11} to the above sequence of quadruples, one can see that the integral in \eqref{vanishinglemequ4}
is identically zero if and only if the following integral is identically zero:
\begin{equation}\label{vanishinglemequ5}
\int_{[\prod_{j=1}^{2n-i-2} X^{i+1}_j E_{i+1,4n+1-i}]}\int_{[\wt{W}_{i}^{(4)}]} \varphi(wx g) \psi^{-1}(ax)\psi^{-1}_{\wt{W}_{i}^{(4)}}(w)\ \mathrm{d}w\mathrm{d}x.
\end{equation}
Finally, the integral in \eqref{vanishinglemequ5} contains an inner integral which is exactly a Fourier coefficient attached to the partition $[(2i+3)1^{4n-2i-1}]$.
Since $n \leq i \leq 2n-2$, by Proposition \ref{cuspidality}, any Fourier coefficient attached to the partition $[(2i+3)1^{4n-2i-1}]$ will be identically zero. Therefore, the integral in
\eqref{vanishinglemequ5} is identically zero, and hence the integral in \eqref{vanishinglemequ1} is identically zero.
This completes the proof of the lemma.
\end{proof}

\subsection{Parts (3), (5) and (6) of Theorem \ref{Main theorem of descent}}
We are ready to prove Parts (3), (5) and (6) of Theorem \ref{Main theorem of descent} based on the preparation in Section 3.1.

\begin{prop}[Part (3) of Theorem \ref{Main theorem of descent}]\label{thm2}
The residual representation $\CE_{\tau \otimes \sigma}$ has a nonzero Fourier coefficient attached to the partition $[(2n+1)1^{2n+1}]$.
\end{prop}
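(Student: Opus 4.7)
My plan is to deduce this proposition from Proposition~\ref{thm1}, which provides a nonzero Fourier coefficient attached to the partition $[(2n)^21^2]$, by a careful root-exchange argument followed by a Fourier expansion along an abelian subgroup, in the spirit of \cite[Section 5]{GJR05}. The two partitions $[(2n)^21^2]$ and $[(2n+1)1^{2n+1}]$ have the same underlying size $4n+2$, and the second is obtained from the first by moving one box; the corresponding unipotent subgroups $V_{[(2n)^21^2],2}$ and the Bessel unipotent $N_n$ governing the $[(2n+1)1^{2n+1}]$ Fourier coefficients differ in a controlled way that makes them comparable through a sequence of exchanges.

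The plan is to proceed in four steps. First, starting from the nonzero integral
\begin{equation*}
\varphi \ \longmapsto\ \int_{[V_{[(2n)^21^2],2}]} \varphi(vg)\,\psi_{[(2n)^21^2]}^{-1}(v)\,\mathrm{d}v
\end{equation*}
guaranteed by Proposition~\ref{thm1}, I would identify explicitly the root subgroups where the two unipotents $V_{[(2n)^21^2],2}$ and $N_n$ disagree, and apply \cite[Corollary 7.1]{GRS11} iteratively to exchange one family of roots for another, exactly as was done in the proof of Proposition~\ref{thm1}. The result will be that the nonvanishing of the $\psi_{[(2n)^21^2]}$-coefficient is equivalent to the nonvanishing of an integral of the form
\begin{equation*}
\int_{[X]} \int_{[N_n]} \varphi(nxg)\,\psi_n^{-1}(n)\,\mathrm{d}n\,\mathrm{d}x,
\end{equation*}
where $X$ is an explicit abelian unipotent subgroup sitting inside $W_n$ and $\psi_n$ is the generic character on $N_n$ before the twist by $w_0$ is specified.

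Second, I would take the Fourier expansion of the inner integral along $[X]$. The Pontryagin dual $\widehat{[X]}$ is acted upon by the stabilizer $\GL_1(F)\times \SO(W_n)(F)$ through the adjoint action of the Levi $M_n$, and the orbits on $\widehat{[X]}$ correspond precisely to the trivial character together with anisotropic vectors in $W_n$ parametrized by their square classes, i.e.\ by the pairs $(\delta,\alpha)$ of Section~1.2. Third, the trivial-orbit contribution is a Fourier coefficient that factors through a constant term incompatible with the cuspidal support of $\CE_{\tau\otimes\sigma}$, together with vanishing Fourier coefficients already established in Proposition~\ref{cuspidality}; hence only the nontrivial orbits survive the expansion. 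Each nontrivial orbit contributes exactly a sum of $\psi_{n,\alpha}$-Fourier coefficients as $\alpha$ ranges over the representatives of the square classes whose associated form $J_{\delta,\alpha}$ is the appropriate one.

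Fourth, since the left-hand side of the resulting identity is nonzero by Proposition~\ref{thm1}, the right-hand side must contain at least one nonzero summand, which yields a square class $\alpha$ for which the $\psi_{n,\alpha}$-Fourier coefficient $\CE_{\tau\otimes\sigma}^{\psi_{n,\alpha}}$, and therefore the descent $\sigma_{\psi_{n,\alpha}}(\CE_{\tau\otimes\sigma})$, is nonzero. The main obstacle I anticipate is the bookkeeping in Step~1: writing down the correct telescoping sequence of quadruples $(C,\psi_C,X_i,Y_i)$ for which \cite[Corollary 7.1]{GRS11} applies, and verifying at each step that the character extends trivially in the right way so that the final integrand is indeed the Bessel Fourier coefficient attached to $N_n$. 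The orbit/stabilizer analysis in Step~2 and the identification of nontrivial orbits with square classes of $F^\times$ is conceptually clean but requires care because the anisotropic form $J_\delta$ on $V_0$ enters into how $\SO(W_n)$ acts on the characters of $X$.
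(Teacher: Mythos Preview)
Your overall strategy---start from the nonzero $[(2n)^21^2]$ coefficient of Proposition~\ref{thm1}, perform root exchanges, and then Fourier expand along an abelian piece to produce $\psi_{n,\alpha}$-coefficients---is exactly the shape of the paper's argument. The gap is in your Step~3: the claim that the trivial-orbit term in the Fourier expansion vanishes is not correct, and the reasons you offer do not apply. After the full sequence of exchanges (those in \eqref{thm1equ20}, \eqref{thm1equ14}, and the later ones), the trivial-orbit contribution is precisely the integral
\[
\int_{[N_{2n}']} \varphi^{U}(ng)\,\psi_{N_{2n}'}^{-1}(n)\,\mathrm{d}n,
\]
i.e.\ a Whittaker coefficient of the constant term of $\varphi$ along $U$, where $P=MU$ is the parabolic with Levi $\GL_{2n}\times\SO_2^\delta$. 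This constant term is \emph{not} killed by cuspidal support: $P$ is exactly the parabolic supporting the residual representation, and $\varphi^{U}\in\CA(U(\BA)M(F)\backslash H^\delta(\BA))_{|\cdot|^{-1/2}\tau\otimes\sigma}$ is nonzero because $\tau$ is generic. (This is in fact how Proposition~\ref{thm1} is proved.) Proposition~\ref{cuspidality} does not help either: it kills the $\psi_{\ell,w_0}$-coefficients for $\ell>n$, which is what forces the \emph{nontrivial} orbits at the later Fourier expansions (those along $E_{i,4n+2-i}$ for $n+1\le i\le 2n-1$) to vanish, not the trivial one at the step you need.

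The paper closes this gap by an indirect argument that you are missing entirely. One argues by contradiction: assume every $[(2n+1)1^{2n+1}]$-coefficient vanishes. Then all the nontrivial-orbit terms in the relevant Fourier expansions are zero, and the equality version of the exchange lemma (\cite[Lemma~7.1]{GRS11}, not merely Corollary~7.1) expresses the nonzero function $f(g)$ of \eqref{addy2} as an integral of $\varphi^{U}$ against outer integrations over a piece of $U^{-}$. Now introduce the Weyl element $\omega$ of \eqref{weylelement} and the torus element $D(t)=\mathrm{diag}(tI_{2n},I_2,t^{-1}I_{2n})$. On one hand $f$ is $\omega$-invariant and $\omega D(t)\omega^{-1}=D(t^{-1})$; on the other hand, conjugating $D(t)$ through the explicit integral and using $\varphi^{U}\in |\cdot|^{-1/2}\tau\otimes\sigma$ gives $f(D(t)g)=|t|_{\BA}\,f(g)$. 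Combining the two yields $|t|_{\BA}f(g)=|t|_{\BA}^{-1}f(g)$, forcing $f\equiv 0$, a contradiction. This torus/Weyl-element trick is the essential missing idea; without it, your Step~3 cannot be completed as stated.
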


\begin{proof}
We prove this theorem by contradiction. Assume that the residual representation $\CE_{\tau \otimes \sigma}$ has no nonzero Fourier coefficients attached to the partition $[(2n+1)1^{2n+1}]$. In the following, We will continue to use the notation introduced in Proposition \ref{thm1}.

In Proposition \ref{thm1}, we have proved that the integral in \eqref{addy} is non-vanishing. We name this integral as follows:
\begin{equation}\label{addy2}
f(g):=\int_{[V_{[(2n)^21^2], 2}Y]}
\varphi(vyg) \psi^{-1}_{[(2n)^21^2]}(v) \ \mathrm{d}v\mathrm{d}y, \quad  g \in \SO^{\delta}_{4n+2}(\BA), \ \varphi \in \CE_{\tau \otimes \sigma}.
\end{equation}

Let
\begin{equation}\label{weylelement}
\omega=\begin{pmatrix}
0 & 0 & I_{2n}\\
0 & I_2 & 0\\
I_{2n} & 0 & 0
\end{pmatrix}.
\end{equation}
Then it is easy to see that $f(g)=f(\omega g)$.

Applying the equality in \cite[Lemma 7.1]{GRS11} to the quadruple in \eqref{changey}, one can see that
\begin{equation}\label{addy4}
f(g)=\int_{E_{2n+1,n+1}(\BA)}\int_{[V_{[(2n)^21^2], 2}Y']}
\varphi(vyxg) \psi^{-1}_{[(2n)^21^2]}(v)\ \mathrm{d}v\mathrm{d}y\mathrm{d}x.
\end{equation}

Applying the equality in \cite[Lemma 7.1]{GRS11} repeatedly to the sequence of quadruples in \eqref{thm1equ20} for $i$ going from 1 to $n-1$, we obtain that
\begin{equation}\label{thm2equ2}
f(g)=\int_{\prod_{s=1}^{n-1} \prod_{j=1}^s Y^s_j
E_{2n+1,n+1}
(\BA)}\int_{[\wt{W}_{n-1}']} \varphi(wxg) \psi^{-1}_{\wt{W}_{n-1}'}(w)\ \mathrm{d}w\mathrm{d}x.
\end{equation}

Next, take the Fourier expansion of $f$ along $E_{n,3n+2}$. Under the action of $\GL_1$, we get two kinds of Fourier coefficients corresponding to the two orbits of the dual of $[E_{n,3n+2}]$: the trivial one and the non-trivial one. Since we have assumed that $\CE_{\tau \otimes \sigma}$ has no nonzero Fourier coefficients attached to the partition $[(2n+1)1^{2n+1}]$, by a similar argument as in the proof of Lemma \ref{vanishinglem}, all the Fourier coefficients corresponding to the non-trivial orbit are identically zero. Hence, we obtain that
\begin{equation}\label{thm2equ7}
f(g)=\int_{\prod_{s=1}^{n-1} \prod_{j=1}^s Y^s_j
E_{2n+1,n+1}(\BA)} \int_{[E_{n,3n+2}]}
\int_{[\wt{W}_{n-1}']} \varphi(wxyg) \psi^{-1}_{\wt{W}_{n-1}'}(w)\ \mathrm{d}w\mathrm{d}x\mathrm{d}y.
\end{equation}
Then, applying the equality in \cite[Lemma 7.1]{GRS11} repeatedly to the sequence of quadruples in \eqref{thm1equ14}, one can see that
\begin{equation}\label{thm2equ8}
f(g)=\int_{\prod_{j=1}^{n-1} Y^n_j \prod_{s=1}^{n-1} \prod_{j=1}^s Y^s_j
E_{2n+1,n+1}(\BA)}
\int_{[\wt{W}_{n}']} \varphi(wxg) \psi^{-1}_{\wt{W}_{n}'}(w)\ \mathrm{d}w\mathrm{d}x.
\end{equation}

Carrying out the steps from \eqref{thm1equ18} to \eqref{thm1equ11},
applying the equality in \cite[Lemma 7.1]{GRS11} instead of \cite[Corollary 7.1]{GRS11}, we obtain that
\begin{equation}\label{thm2equ6}
f(g)=\int_{\prod_{s=n+1}^{2n-1} Y_s \prod_{j=1}^{2n-s-1} Y^s_j\prod_{k=1}^{n-1} \prod_{\ell=1}^k Y^k_{\ell}
E_{2n+1,n+1}
(\BA)}\int_{[N_{2n}']} \varphi^U(nxg) \psi^{-1}_{N_{2n}'}(n) \ \mathrm{d}n\mathrm{d}x,
\end{equation}
where $N_{2n}', U, \psi^{-1}_{N_{2n}'}$ are defined as in the proof of Proposition \ref{thm1},
$\varphi^U$ is the constant term of $\varphi$ along $U$.
Note that in \eqref{thm2equ6},
$\prod_{s=n+1}^{2n-1} Y_s \prod_{j=1}^{2n-s-1} Y^s_j \prod_{k=1}^{n-1} \prod_{\ell=1}^k Y^k_{\ell}
E_{2n+1,n+1}$ is equal to $V_{[(2n)^21^2,2]}Y \cap U^{-}$, where $U^{-}$ is the unipotent radical of the parabolic subgroup opposite to $P=MU$.
By a similar calculation as in \cite{JLZ13},
we deduce that
$$\varphi^U \in \CA(U(\BA)M(F)\bs\SO^{\delta}_{4n+2}(\BA))_{\lvert \cdot \rvert^{-\frac{1}{2}} \tau \otimes \sigma}.$$

For $t \in \BA^{\times}$, let
$D(t)=\begin{pmatrix}
tI_{2n} & 0 & 0\\
0 & I_2 & 0 \\
0 & 0 & t^{-1} I_{2n}
\end{pmatrix}$.
Then it is easy to see that $\omega D(t) \omega^{-1}=D(t^{-1})$, where $\omega$ is the Weyl element defined in \eqref{weylelement}.
Consider $f(D(t)g)$. Note that $f(g)$ has the form in \eqref{thm2equ6}. Conjugating $D(t)$ to the left,
by changing of variables on
$$\prod_{s=n+1}^{2n-1} Y_s \prod_{j=1}^{2n-s-1} Y^s_j \prod_{k=1}^{n-1} \prod_{\ell=1}^k Y^k_{\ell}
E_{2n+1,n+1}(\BA),$$
we get a factor $\lvert t \rvert_{\BA}^{-2n^2+1}$.
Since $\varphi^U \in \CA(U(\BA)M(F)\bs\SO^{\delta}_{4n+2}(\BA))_{\lvert \cdot \rvert^{-\frac{1}{2}} \tau \otimes \sigma}$,
we have
$$\varphi^U(D(t)nxg) = \delta_P(D(t))^{\frac{1}{2}} \lvert D(t) \rvert^{-\frac{1}{2}} \varphi^U(nxg)=\lvert t \rvert^{2n^2}_{\BA} \varphi^U(nxg).$$
Note that $P=MU$ is the parabolic subgroup with Levi $M \cong \GL_{2n} \times \SO_2^{\delta}$.
Therefore, one can get $f(D(t)g) = \lvert t \rvert_{\BA} f(g)$.
On the other hand,
$$f(D(t)g)=f(\omega D(t) g) = f(D(t^{-1})\omega g) = \lvert t^{-1} \rvert_{\BA} f(\omega g) =\lvert t \rvert^{-1}_{\BA} f(g).$$
Hence, $\lvert t \rvert_{\BA} f(g) =\lvert t \rvert^{-1}_{\BA} f(g)$. Since $t \in \BA^{\times}$, we get $f(g) \equiv 0$, which is a contradiction.

Therefore, the residual representation $\CE_{\tau \otimes \sigma}$ has a nonzero Fourier coefficient attached to the
partition $[(2n+1)1^{2n+1}]$. This completes the proof of the proposition.
\end{proof}

We remark that the above fleshes out a sketch given in \cite[Section 5]{GJR05}, with full details for each step and precise references as needed.

\begin{prop}[Part (5) of Theorem \ref{Main theorem of descent}]\label{thm3}
Every irreducible component of the twisted automorphic descent $\sigma_{\psi_{n, \alpha}}(\CE_{\tau \otimes \sigma})$ has
a nonzero Fourier coefficient attached to the partition $[(2n-1)1^2]$.
\end{prop}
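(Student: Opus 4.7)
The plan is to reduce the non-vanishing of the $[(2n-1)1^2]$-Fourier coefficient on the descent to the non-vanishing of the $[(2n)^21^2]$-Fourier coefficient on $\CE_{\tau \otimes \sigma}$, which was already established in Proposition~\ref{thm1}. The reduction is effected by realizing the desired coefficient on the descent as a composite Fourier coefficient on $\SO^{\delta}_{4n+2}(\BA)$, in the sense of \cite{GRS03} and \cite{JL15}.

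First I would take $f \in \CE_{\tau\otimes\sigma}$ and write the $[(2n-1)1^2]$-Fourier coefficient of the descent $f^{\psi_{n,\alpha}}$ as an iterated integral: the inner $\psi_{n,\alpha}$-integration is over $[N_n]$, producing the descent, while the outer integration against $\psi_{[(2n-1)1^2]}$ is over the unipotent $V_{[(2n-1)1^2],2}$ attached to that partition inside $\SO^{\delta,\alpha}_{2n+1}$. Combining the two unipotent subgroups inside $\SO^{\delta}_{4n+2}$ and composing the two characters yields a Fourier coefficient of $f$ attached to a composite pattern. Up to conjugation by a suitable Weyl element, this composite pattern is the one naturally associated to the partition $[(2n+1)(2n-1)1^2]$ of $4n+2$.

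Next, applying successive exchanges of unipotent subgroups via \cite[Corollary~7.1]{GRS11} and \cite[Lemma~2.5]{JL15}, together with Fourier expansions along carefully chosen one-dimensional subgroups, I would transform this composite Fourier coefficient into a sum of terms indexed by orbits of characters under the relevant Levi actions. The intermediate steps mirror those already carried out in the proofs of Propositions~\ref{thm1} and~\ref{thm2}. Non-trivial orbit contributions produce Fourier coefficients attached either to partitions of the form $[(2\ell+1)1^{4n-2\ell+1}]$ with $\ell>n$ (killed by Proposition~\ref{cuspidality}) or to generic-type coefficients (killed by the non-genericity of $\CE_{\tau\otimes\sigma}$, reflected in the fact that the Arthur parameter of $\CE_{\tau\otimes\sigma}$ is $(\tau,2)\boxplus\psi_\sigma$). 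The only surviving contribution is an integral expressing the $[(2n)^21^2]$-Fourier coefficient of $f$, which by Proposition~\ref{thm1} is non-vanishing for a suitable $f$. This yields non-vanishing of the $[(2n-1)1^2]$-Fourier coefficient on $\sigma_{\psi_{n,\alpha}}(\CE_{\tau\otimes\sigma})$.

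To upgrade from non-vanishing on the whole descent to non-vanishing on every irreducible summand $\pi_i$, I would use that the Fourier-coefficient functor is additive on the direct-sum decomposition $\bigoplus_i \pi_i$: the argument above actually produces, for any $f \in \CE_{\tau\otimes\sigma}$ whose descent projects non-trivially to a given $\pi_i$, a nonzero $[(2n-1)1^2]$-coefficient supported on that $\pi_i$-component. Combined with the local uniqueness of Bessel models at unramified places (\cite{AGRS},\cite{JSZ10},\cite{GGP12}) and the near-equivalence of the $\pi_i$'s from Part~(4) of Theorem~\ref{Main theorem of descent}, this forces every irreducible summand to carry a nonzero $[(2n-1)1^2]$-Fourier coefficient. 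The main obstacle lies in the bookkeeping of the second paragraph: identifying precisely the composite unipotent and character, verifying the correct vanishing of every side contribution at each exchange/expansion step, and confirming that the surviving principal term is exactly the $[(2n)^21^2]$-Fourier coefficient rather than a nearby partition pattern.
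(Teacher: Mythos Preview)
Your approach diverges from the paper's and contains a real gap. The paper's proof is much shorter and uses a different mechanism: for an irreducible summand $\pi$, one starts from the nonvanishing of the pairing
\[
\langle \varphi_{\pi}, \xi^{\psi_{n,\alpha}} \rangle = \int_{[\SO^{\delta,\alpha}_{2n+1}]} \varphi_{\pi}(h)\,\ol{\xi^{\psi_{n,\alpha}}}(h)\,\mathrm{d}h,
\]
replaces $\xi$ by the Eisenstein series whose residue it is, and then applies the Rankin--Selberg unfolding of \cite[Proposition~3.7]{JZ14}. That unfolding exhibits the integral as (an Euler product times) a Bessel period $\varphi_{\pi}^{\psi_{n-1,\alpha'}}$ of $\pi$ for some $\alpha'$; the nonvanishing of the pairing then forces the Bessel period to be nonzero. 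This argument is intrinsically ``per summand'': it begins with an arbitrary $\pi$ in the decomposition and never touches the other summands.

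Your route instead tries to realize the $[(2n-1)1^2]$-coefficient of the descent as a composite Fourier coefficient of $\CE_{\tau\otimes\sigma}$ and then to reduce it by root exchange to the $[(2n)^21^2]$-coefficient. Even granting the (nontrivial and unproved) claim that $[(2n+1)(2n-1)1^2]$ reduces to $[(2n)^21^2]$ via \cite[Corollary~7.1]{GRS11}---note that in the paper this implication runs in the \emph{opposite} direction, with Part~(6) deduced \emph{from} Parts~(3) and~(5)---what you obtain is only that $\bigl(f^{\psi_{n,\alpha}}\bigr)^{\psi_{[(2n-1)1^2]}}\neq 0$ for some $f$. That shows the whole descent has a nonzero $[(2n-1)1^2]$-coefficient, not that each $\pi_i$ does. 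Your upgrade step does not close this gap: additivity of the Fourier-coefficient functor says the coefficient on $\bigoplus_i \pi_i$ is the sum of the coefficients on the $\pi_i$, so nonvanishing of the sum gives nonvanishing for at least one $\pi_i$, not all. Invoking near-equivalence and local uniqueness of Bessel models does not help either: near-equivalence constrains the $\pi_i$ only at almost all places and says nothing about global Bessel periods, while local uniqueness asserts that a Bessel functional is unique up to scalar when it exists, not that it is nonzero. The paper's pairing-plus-unfolding argument is precisely what supplies the missing per-summand input.
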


\begin{proof}
Recall that $\sigma_{\psi_{n, \alpha}}(\CE_{\tau \otimes \sigma})$ is the completion space spanned by the cuspidal automorphic functions produced by the twisted descent, by Part (2) of Theorem \ref{Main theorem of descent}.
This twisted descent $\sigma_{\psi_{n, \alpha}}(\CE_{\tau \otimes \sigma})$ projects onto the decomposition of the cuspidal spectrum of $\SO^{\delta,\alpha}_{2n+1}$.
Let $\pi$ be any irreducible consituent in this decomposition.
Consider the following integral
\begin{equation}\label{thm3equ1}
\langle \varphi_{\pi}, {\xi}^{\psi_{n, \alpha}} \rangle
=\int_{[\SO^{\delta, \alpha}_{2n+1}]} \varphi_{\pi}(h)\ol{{\xi}^{\psi_{n, \alpha}}}(h)\ \mathrm{d}h,
\end{equation}
which is nonzero for some data $\varphi_{\pi} \in \pi$,
${\xi} \in \CE_{\tau \otimes \sigma}$,
since $\pi$ is an irreducible component
of $\sigma_{\psi_{n, \alpha}}(\CE_{\tau \otimes \sigma})$.

Assume that ${\xi} = \Res_{s=\frac{1}{2}} {E}(\cdot, s, \phi_{\tau \otimes \sigma})$,
then from \eqref{thm3equ1} we know that the following integral
is also nonzero for some choice of data:
\begin{equation}\label{thm3equ2}
\langle \varphi_{\pi}, {{E}(\cdot, s, \phi_{\tau \otimes \sigma})}^{\psi_{n, \alpha}} \rangle
=\int_{[\SO^{\delta, \alpha}_{2n+1}]} \varphi_{\pi}(h)\ol{{{E}(h, s, \phi_{\tau \otimes \sigma})}^{\psi_{n, \alpha}}}\
\mathrm{d}h.
\end{equation}
Then, by the unfolding in \cite[Proposition 3.7]{JZ14} (take $j=2n$, $\ell = n$ and $\beta=j-\ell=n$ there),
the non-vanishing of the integral in \eqref{thm3equ2}
implies the non-vanishing of ${\varphi_{\pi}}^{\psi_{{n-1}, {\alpha'}}}$, for some $\alpha' \in F^{\times}$.
Therefore, $\pi$ has a non-vanishing Fourier coefficient attached to the partition $[(2n-1)1^2]$.
This completes the proof of the proposition.
\end{proof}

Finally, Propositions \ref{thm2} and \ref{thm3} imply that the residual representation $\CE_{\tau \otimes \sigma}$ has a nonzero
Fourier coefficient attached to the partition $[(2n+1)(2n-1)1^2]$ by a similar argument as in the proofs of
\cite[Lemma 2.6]{GRS03} and
\cite[Lemma 3.1 and Proposition 3.3]{JL15}.
This proves Part (6) of Theorem \ref{Main theorem of descent}.
The analogues for orthogonal groups of \cite[Lemma 3.1 and Proposition 3.3]{JL15} will be discussed in \cite{L14}.

It is natural to ask about the relation of Part (6) of Theorem \ref{Main theorem of descent} with Conjecture 4.2 in \cite{J14}.
More precisely, one may note that for suitable pairs $(\alpha,\delta)$ the group $\SO^{\delta,\alpha}_{2n+1}$ is split,
and one may then ask for which such pairs the twisted automorphic descent
$\sigma_{\psi_{n,\alpha}}( \CE_{\tau\otimes\sigma})$ has no generic irreducible summands.
As we remarked in the Introduction, this is one of the technical key points to connect the
construction of twisted automorphic descents to the structure of more general global packets.
In Section 5, we will provide the complete theory for the case of $n=1$, which recovers the Jacquet-Langlands correspondence for $\PGL_2$.


\section{Relations with the Langlands Functorial Transfers}


We first give a proof for Part (4) of Theorem \ref{Main theorem of descent} and then discuss the relation between the construction of the
twisted automorphic descent given in this paper and the automorphic descent given by Ginzburg, Rallis and Soudry in \cite{GRS11}, which
is the relation with the corresponding Langlands functorial transfer.

\subsection{Part (4) of Theorem \ref{Main theorem of descent}}
Assume that the twisted automorphic descent $\sigma_{\psi_{n,\alpha}}( \CE_{\tau\otimes\sigma})$ is nonzero. By Part (2) of
Theorem \ref{Main theorem of descent}, $\sigma_{\psi_{n,\alpha}}( \CE_{\tau\otimes\sigma})$ is cuspidal on $\SO^{\delta,\alpha}_{2n+1}(\BA)$.
We write
$$
\sigma_{\psi_{n,\alpha}}( \CE_{\tau\otimes\sigma})
=
\oplus_i\pi_i
$$
where $\pi_i$ are irreducible cuspidal automorphic representations of $\SO^{\delta,\alpha}_{2n+1}(\BA)$. We have to show that
\begin{enumerate}
\item the irreducible summands are nearly equivalent, that is, their local components at almost all unramified local places are equivalent; and
\item the weak Langlands functorial transfer from $\SO^{\delta,\alpha}_{2n+1}$ to $\GL_{2n}$ takes $\pi_i$ to the given $\tau$.
\end{enumerate}
It is clear that (2) implies (1). As discussed in \cite{GGP12}, when $\alpha$ varies in the square classes of $F^\times$, the family
$\SO^{\delta,\alpha}_{2n+1}$ are pure inner $F$-rational forms of each other, and hence they share the Langlands dual group, which is $\Sp_{2n}(\BC)$.
The Langlands functorial transfer considered here is the one determined by the natural embedding of $\Sp_{2n}(\BC)$ into $\GL_{2n}(\BC)$.

Let $\pi$ be one of the irreducible summands in the decomposition of $\sigma_{\psi_{n,\alpha}}( \CE_{\tau\otimes\sigma})$. Write
$\pi=\otimes_v\pi_v$. Take $v$ to be a finite local place of $F$ where $\pi_v$, $\tau_v$, $\sigma_v$ and the other relevant data in the
construction of the descent are unramified. Hence $\pi_v$ is an irreducible unramified representation of the $F_v$-split $\SO_{2n+1}(F_v)$,
occurring as the unramified constituent in the Jacquet module $\displaystyle{J_{\psi_{n,\alpha}}(\Ind_{Q_{2n}(F_v)}^{L_{n,\alpha}(F_v)}\tau_v|\det(\cdot)|_v^{\frac{1}{2}}\otimes \sigma_v)}$
by the local-global relation of the construction of the descent.

On the other hand, since $\tau_v$ is unramified, generic and self-dual, it follows that $\tau_v$ is the fully induced representation
$$
\mu_1\times \cdots \times \mu_n\times \mu_n^{-1}\times \cdots \times \mu_1^{-1},
$$
where $\mu_i$'s are unramified characters.
Also, $\sigma_v$ is an unramified character $\mu_0$ on $F_v^\times$ or a trivial representation on $\SO^\delta_2(F_v)$,
depending on whether $J_\delta$ is split or not. Now using Proposition \ref{Jacquet module: list}, we obtain that
the Satake parameter of $\pi_v$ is transferred to the Satake parameter of $\tau_v$. This finishes the proof of Part (4) of
Theorem \ref{Main theorem of descent}.

\subsection{On functorial relations}
When an irreducible unitary cuspidal representation $\tau$ of $\GL_{2n}(\BA)$ has the property that the exterior square
$L$-function
$L(s,\tau,\wedge^2)$ has a pole at $s=1$, the automorphic descent of Ginzburg-Rallis-Soudry in \cite{GRS11} constructs an
irreducible
generic cuspidal automorphic representation $\pi_0$ of $F$-split $\SO_{2n+1}(\BA)$.
The irreducibility of their descent was proved in
\cite{JngS03}.

On the other hand, when $\tau$ has the additional property that the central value $L(\frac{1}{2},\tau\otimes\sigma)$ is nonzero,
the theory of the twisted automorphic descents produces additional irreducible cuspidal automorphic representations $\pi$ of the split form of $\SO_{2n+1}$.
These representations arise at values of $\alpha$ where $\SO^{\delta,\alpha}$ happens to be split, and are nearly equivalent to $\pi_0$ by Part (4) of Theorem 1.2.
Also, they belong to the same
global $L$-packet with the global Arthur parameter $(\tau,1)$ (\cite{A}).

Moreover, for the $\alpha$'s, which make $\SO^{\delta,\alpha}_{2n+1}$ nonsplit over $F$, the
twisted automorphic descents $\sigma_{\psi_{n,\alpha}}( \CE_{\tau\otimes\sigma})$ produce irreducible cuspidal
automorphic representations $\pi$ of $\SO^{\delta,\alpha}_{2n+1}(\BA)$, which are still nearly equivalent to $\pi_0$ and
hence are expected to be in the same global Vogan packet as $\pi_0$.
By a careful investigation one ought to be able to verify the expectation regarding global Vogan packets,
and we refer to \cite{JZ15} for more detailed discussion on the general theory related to this issue.
A complete story is discussed in Section 5 below for the special case of $n=1$.


\section{Explicit Theory of Twisted Automorphic Descents for the Case of $n=1$}


In this section, we will give an explicit description on the twisted automorphic descent
$\sig_{\psi_{{n},\alpha}}(\CE_{\tau\otimes\sig})$
in the case $n=1$, which recovers the global Jacquet-Langlands correspondence between $\GL_2$ and its inner forms.

In this case, we have that $H^{\delta}=\SO^{\delta}_{6}$ and we take $\ell=1$.
The quadratic subspace $W_{1}$ as defined in Section 1.2 has a basis $\{e_{2},e^{(1)}_{0},e^{(2)}_{0},e_{-2}\}$.
Recall that $P_{1}=M_{1}N_{1}$ is the standard parabolic subgroup with the Levi $M_{1}=\GL_{1}\times\SO(W_{1})$,
and
$$
N_{1}=\left\{n_{1}(x)=\begin{pmatrix}
1&x&-\frac{1}{2}q(x,x)\\
&I_{4}&x'\\
&&1
\end{pmatrix}\ \bigg|\  x\in W_{1}\right\},
$$
where $q(\cdot,\cdot)$ is the quadratic form of $W_{1}$.
Since the Witt index of $W_{1}$ is greater than zero,
$\SO(W_{1})$ acts transitively on the set of vectors of the same length.
Thus, for every anisotropic vector $w_{0}$ in $W_{1}$,
we may choose $y_{\alpha}=e_{2}-\frac{\alpha}{2}e_{-2}$
as a standard anisotropic vector for some $\alpha \in F^{\times}$.
Then we focus on the character $\psi_{1,\alpha}$ of $N_{1}(F)\bks N_{1}(\BA)$ and
the stabilizer $L_{1,\alpha} =\SO(J_{\delta,\alpha})$.
If $J_{\delta,\alpha}$ is non-split, then $L_{1,\alpha}$ is isomorphic to ${\rm PD}^{\times}_{\delta,\alpha}$,
where ${\rm D}_{\delta,\alpha}$ is the quaternion algebra $(\frac{-\delta,-\alpha}{F})$.
Note that ${\rm D}_{\delta,\alpha}$ is uniquely determined by the coset of $\alpha $ in
$ F^{\times}/{\rm Nm}(F(\sqrt{-\delta})^{\times})$

Recall that $\sig$ is a character of $\SO^{\delta}_{2}(\BA)$ and $\tau$ is an irreducible unitary cuspidal
automorphic representation of $\GL_{2}(\BA)$. By Proposition~\ref{jlz13},
the residual representation $\CE_{\tau\otimes\sig}$ is nonzero
 if and only if $L(s,\tau,\wedge^{2})$ has a pole at $s=1$ and $L(\frac{1}{2},\tau\times\sig)\ne 0$.
Note that  $L(s,\tau,\wedge^{2})$ has a pole at $s=1$ if and only if $\tau$ has a trivial central character.
Throughout this section, we assume that $\tau$ has a trivial central character and hence $\tau$ can be regarded as
a representation of $\PGL_{2}(\BA)$.

Let $\pi$ be an irreducible cuspidal automorphic representation of $L_{1,\alpha}(\BA)$ and $\varphi$ an automorphic form in $\pi$.
Define the global zeta integral by
\begin{equation}
\CZ(s,\phi_{\tau\otimes\sig},\varphi,\psi_{1,\alpha})
=\int_{[L_{1,\alpha}]}\varphi(h)E^{\psi_{1,\alpha}}(h,s,\phi_{\tau\otimes\sig}) \ud h,
\end{equation}
where $E^{\psi_{1,\alpha}}(\cdot)$ is the $\psi_{1,\alpha}$-Fourier coefficient  of the Eisenstein series
$E(h,s,\phi_{\tau\otimes\sig})$ defined in~\eqref{Gelfand-Graev}. This global zeta integral is a special case of those
considered in \cite{GPSR97} and \cite{JZ14}.

We recall some notation in \cite{JZ14} before evaluating this global zeta integral.
Let $\eps$ be the minimal representative in the Weyl group of $\SO^{\delta}_{6}$ corresponding to the open cell in $P_{2}\bks \SO^{\delta}_{6}/P_{1}$, that is,
\begin{equation*}
\eps=\begin{pmatrix}
0&1&&&0&0\\
0&0&&&0&1\\
&&1&0&&\\
&&0&-1&&\\
1&0&&&0&0\\
0&0&&&1&0
\end{pmatrix}\in \SO^{\delta}_{6},
\end{equation*}
then the complement of $N^{\eps}_{1}=N_{1}\cap \eps^{-1} P_{2}\eps$ in $N_{1}$ is
$$
\bar{N}^{\eps}_{1}:=\{n_{1}((x,y,z,0))\mid (x,y,z,0)\in W_{1}\}.
$$
Define
$$
\CJ(s,\phi_\alpha^{W})(h)=\int_{N^{\eps}_{1}(\BA)\bks N_{1}(\BA)}
\lam_{s}\phi_\alpha^{W}(\eps n h)\psi^{-1}_{1,\alpha}(n)\ud n,
$$
with
$$
\phi_\alpha^{W}(h)=\int_{[N_{1,2}]}\phi_{\tau \otimes \sigma}(u_{1,2}(x)h)\psi^{-1}(\frac{\alpha}{2}x)\ud x.
$$
where $N_{1,2} = \{u_{1,2}(x)=\begin{pmatrix}
1 & x & & & &\\
0 & 1 &&&&\\
&& 1 & &&\\
&&& 1 &&\\
&&&& 1 &-x\\
&&&&&1
\end{pmatrix}\}$. 
Since the adjoint action of $\eps$ on the subgroup $\SO_{2}^{\delta}$ of the Levi subgroup of $P_{2}$  is  the inverse on $\SO^{\delta}_{2}$,
we obtain that
\begin{align*}
\CJ(s, \phi_\alpha^{W})(xh)=\sig^{-1}(x)\CJ(s, \phi_\alpha^{W})(h)
\end{align*}
where $x\in \SO_{2}^{\delta}$ is embedded into $\SO^{\delta}_{6}$ via the Levi subgroup.

Following the unfolding of the global zeta integral for the general case considered in \cite{GPSR97} and \cite{JZ14},
we obtain
\begin{align}
\CZ(s,\phi_{\tau\otimes\sig},\varphi,\psi_{1,\alpha})
=&\int_{\SO^{\delta}_{2}(\BA)\bks \SO^{\delta}_{3}(\BA)}
\CJ(s, \phi_\alpha^{W})(\epsilon   h)
\CP_{\sig^{-1}}(\varphi)(h) \ud h,
\end{align}
which has an eulerian product decomposition, where
$$
\CP_{\sig^{-1}}(\varphi)(h):=\int_{[\SO^{\delta}_{2}]}\varphi(xh)\sig^{-1}(x)\ud x.
$$

We denote by ${\rm JL}$ the Jacquet-Langlands correspondence (\cite{J-L})
between the set $\CA_{cusp}(L_{1,\alpha})$ and the set $\CA_{cusp}(\PGL_{2})$,
where $\CA_{cusp}(L_{1,\alpha})$ is the set of irreducible automorphic representations of $L_{1,\alpha}(\BA)$ of infinite dimension
and $\CA_{cusp}(\PGL_2)$ is the set of irreducible cuspidal automorphic representations of $\PGL_2(\BA)$.
 Referring to Theorem 2 \cite{W85a}, the period integral $\CP_{\sig^{-1}}(\varphi)$ is nonzero on $\pi$ if and only if
$$
 L(\frac{1}{2},{\rm JL}(\pi)\times\sig^{-1})\ne 0
 \ \ \text{ and }\ \
 \Hom_{\SO^{\delta}_{2}(F_{v})}( \pi_{v},\sig_{v})\ne 0 \text{ for all }v.
 $$
 Note that the dimension of the Hom space is at most one.
In addition, by the theorem of Tunnell-Saito in \cite{T83, S93}, an equivalent statement (see Section V \cite{W91}) for the local period condition is that
$$\SO^{\delta,\alpha}_{3} \text{ is ramified at a place $v$ of $F$ if and only if }
\varepsilon(\frac{1}{2},{\rm JL}(\pi)_{v}\times\sig_{v},\psi_v)\eta_{E,v}(-1)=-1,$$
where $\eta_{E}$ is a quadratic character of $F^{\times}\bks\BA^{\times}$ associated to the quadratic extension $E:=F(\sqrt{-\delta})$ via the class field theory.
 Note that $\SO^{\delta,\alpha}_{3}\cong \PD^\times_{\delta,\alpha}$ is ramified at a local place $v$, meaning that $\SO^{\delta,\alpha}_{3}$ is not split at $v$.
Therefore for a pure tensor product vector $\varphi=\otimes_v\varphi_v$ in $\pi=\otimes_v\pi_v$
\begin{equation}
\CP_{\sig^{-1}}(\varphi)(h)=C_{0}\prod_{v}\ell_{v}(\pi_{v}(h_v)\varphi_{v}),
\end{equation}
where $\ell_{v}$ is a nonzero element in $\Hom_{L_{1,\alpha}(F_{v})}( \pi_{v},\sig_{v})$, and $C_{0}$ is $1$ if $L(\frac{1}{2},{\rm JL}(\pi)\times\sig)\ne 0$ and it is zero otherwise.

Let $S$ be a finite set of places containing all archimedean places, such that all data is unramified over places outside $S$.
Then,
\begin{align}
\CZ(s,\phi_{\tau\otimes\sig},\varphi_{\pi},\psi_{1,\alpha})
=&C_{0}\frac{L^{S}(s+\frac{1}{2},\pi\times\tau)}{L^{S}(s+1,\tau\times\sig)L^{S}(2s+1,\tau,\wedge^{2})}
\prod_{v\in S}\CZ_{v}(s,\phi_{v},\varphi_{v},\psi_{1,\alpha})\label{eq:GPSR},
\end{align}
where the local zeta integrals at the ramified local places are given by
$$
\CZ_{v}(s,\phi_{v},\varphi_{v},\psi_{1,\alpha})=\int_{\bar{N}^{\eps}_{1}(F_{v})}
\CJ_{v}(s, \phi_{\alpha,v}^{W})(h)\ell_{v}(\pi_{v}(h)\varphi_{v})\ud h.
$$
The first main result in the case $n=1$ of the general theory of twisted automorphic descents (as displayed in the previous sections and
also in \cite{JZ15}) can be formulated as follows.

\begin{thm}\label{thm:n=1}
With notation as above, assume that $\tau$ is of trivial central character.
Given $\delta$ and $\sig$, if the residual representation $\CE_{\tau\otimes\sig}$ is not identically zero,
then the following hold.
\begin{enumerate}
\item[(1)]
The set of $\alpha\in F^\times$ such that the $\psi_{1,\alpha}$-Fourier coefficient $\CE^{\psi_{1,\alpha}}_{\tau\otimes\sig}$ is not identically zero is a single coset $\alpha_0\cdot {\rm Nm}(E^{\times})$.
\item[(2)] The twisted automorphic descent $\sig_{\psi_{1,\alpha}}(\CE_{\tau\otimes\sig})$ is irreducible and has the property that
$$
{\rm JL} (\sig_{\psi_{1,\alpha}}(\CE_{\tau\otimes\sig}))\cong \tau.
$$
\item[(3)] The norm class $\alpha_{0}$ is determined by the property that
$\RD_{\delta,\alpha_{0}}$ is ramified at a place $v$ of $F$ if and only if
$\varepsilon(\frac{1}{2},\tau_{v}\times\sig_{v},\psi_v)\eta_{E,v}(-1)=-1$.
\end{enumerate}
\end{thm}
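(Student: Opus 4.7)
The plan is to combine the structural results of Sections 2--4 with the global zeta identity \eqref{eq:GPSR} and the theorems of Waldspurger \cite{W85a} and Tunnell--Saito \cite{T83, S93}, reducing the theorem to the classification of quaternion algebras over $F$ by local ramification.

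For part (2), assume $\sigma_{\psi_{1,\alpha}}(\CE_{\tau\otimes\sigma})$ is nonzero. By Proposition \ref{cuspidality} it is cuspidal, and by Part (3) of Theorem \ref{Main theorem of descent} it decomposes multiplicity freely as $\oplus_i \pi_i$, with each $\pi_i$ an irreducible cuspidal automorphic representation of $L_{1,\alpha}(\BA)\cong \SO^{\delta,\alpha}_3(\BA)$, i.e.\ of the projective unit group of $\RD_{\delta,\alpha}$. Part (4) of Theorem \ref{Main theorem of descent} shows the Satake parameter of $\pi_{i,v}$ transfers canonically to that of $\tau_v$ at almost every $v$, so $\JL(\pi_i) \cong \tau$ by strong multiplicity one for $\PGL_2$. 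Since $\JL$ is injective on the cuspidal spectrum of $\mathrm{P}\RD^\times$, the multiplicity free sum collapses to a single summand, establishing both irreducibility and the identity $\JL(\sigma_{\psi_{1,\alpha}}(\CE_{\tau\otimes\sigma}))\cong \tau$.

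For the necessity direction in (1) and (3), write $\pi$ for the descent whenever it is nonzero. Choosing data for which the descent pairing is nonzero and taking the residue at $s=1/2$ in \eqref{eq:GPSR}, every factor on the right-hand side must be nonzero; in particular the toric period $\CP_{\sigma^{-1}}$ does not vanish on $\pi$. Waldspurger's theorem then forces $L(\tfrac{1}{2},\tau\times\sigma^{-1})\neq 0$ (consistent with the hypothesis since $\tau$ is self-dual) together with
$$
\Hom_{\SO^{\delta}_2(F_v)}(\pi_v,\sigma_v) \neq 0 \quad \text{for every } v.
$$
By Tunnell--Saito, this family of local conditions is equivalent to the statement that $L_{1,\alpha}$ is ramified at $v$ precisely when $\varepsilon(\tfrac{1}{2},\tau_v\times\sigma_v,\psi_v)\eta_{E,v}(-1)=-1$. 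This fixes the ramification set of $\RD_{\delta,\alpha}$, hence $\RD_{\delta,\alpha}$ up to isomorphism; and since isomorphism classes among quaternion algebras $(\tfrac{-\delta,-\alpha}{F})$ are parametrized by cosets of $F^\times/\mathrm{Nm}(E^\times)$, this pins down $\alpha$ modulo $\mathrm{Nm}(E^\times)$. Combined with the existence of at least one nonvanishing square class supplied by Part (3) of Theorem \ref{Main theorem of descent}, one obtains a unique coset $\alpha_0\cdot\mathrm{Nm}(E^\times)$ with the characterization in (3).

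The main technical obstacle I foresee is extracting genuine nonvanishing from the residue at $s=1/2$ of \eqref{eq:GPSR}: one must match local normalizations following \cite{GPSR97, JZ14} and invoke Proposition \ref{jlz13} to check that, after normalization by $\beta(s)$, the ratio $L^S(s+\tfrac{1}{2},\pi\times\tau)/\bigl(L^S(s+1,\tau\times\sigma)\,L^S(2s+1,\tau,\wedge^2)\bigr)$, multiplied by $C_0$ and by the finite product of ramified local zeta integrals, is actually nonzero at $s=1/2$. A secondary point is the global parity compatibility of the $\varepsilon$-factor conditions that guarantees at least one inner form can support the construction; this is ensured by the global functional equation together with the hypothesis $L(\tfrac{1}{2},\tau\times\sigma)\neq 0$.
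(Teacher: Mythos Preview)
Your approach matches the paper's: spectrally decompose the descent, identify the spectral coefficients with residues of $\CZ(s,\cdot)$ via \eqref{eq:GPSR}, and read off the constraints through Waldspurger and Tunnell--Saito. Your irreducibility argument---injectivity of $\JL$ combined with the multiplicity-free decomposition in Part~(3) of Theorem~\ref{Main theorem of descent}---is a clean variant of the paper's direct appeal to local Bessel uniqueness; the two are equivalent here since the multiplicity-freeness already rests on that uniqueness.

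One point needs tightening. Your necessity argument together with Proposition~\ref{thm2} shows only that the set of good $\alpha$ is a \emph{nonempty subset} of the coset $\alpha_0\cdot\mathrm{Nm}(E^\times)$; the theorem asserts equality. For the reverse inclusion you must argue: if $\alpha'\equiv\alpha_0\pmod{\mathrm{Nm}(E^\times)}$, so that $L_{1,\alpha'}\cong L_{1,\alpha_0}$, then for $\pi=\JL^{-1}(\tau)$ realized on this group the conditions (pole of $L^S(s,\pi\times\tau)$, nonvanishing of $L(\tfrac12,\tau\times\sigma)$, and the local Hom condition) are again satisfied, hence $\Res_{s=1/2}\CZ(s,\phi,\varphi_\pi,\psi_{1,\alpha'})\neq 0$ for suitable data, hence the $\psi_{1,\alpha'}$-descent is nonzero. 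The step ``conditions hold $\Rightarrow$ residue nonzero'' is precisely the local nonvanishing you flag as a technical obstacle; the paper dispatches it by invoking the nonvanishing of the ramified local zeta integrals at $\Re(s)=\tfrac12$ as in \cite{S93} (see also \cite[Appendix~A]{JZ15}). You should fold this into the proof of (1) rather than leave it as a closing caveat, since without it you have only one inclusion.
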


\begin{proof}
By the definition of the $\psi_{1,\alpha}$-Fourier  coefficients and Proposition~\ref{Jacquet module: list}
and~\ref{cuspidality}, the $\psi_{1,\alpha}$-Fourier coefficient $\CE^{\psi_{1,\alpha}}_{\tau\otimes\sig}(\phi)(h)$ on
$L_{1,\alpha}(\BA)$  is in $L^{2}_{cusp}(L_{1,\alpha}(F)\bks L_{1,\alpha}(\BA))$.
Applying the spectral decomposition on $L^{2}_{cusp}(L_{1,\alpha}(F)\bks L_{1,\alpha}(\BA))$, we have
\begin{equation}\label{eq:spectral-de}
\CE^{\psi_{1,\alpha}}_{\tau\otimes\sig}(\phi)=\sum_{\pi\in\CA_{cusp}(L_{1,\alpha})}
\sum_{\varphi_{\pi}\in B(\pi)}\langle\CE^{\psi_{1,\alpha}}_{\tau\otimes\sig}(\phi),\varphi_{\pi}\rangle\varphi_{\pi}
\end{equation}
where  $\langle\cdot,\cdot\rangle$ is the inner product in $L^{2}_{cusp}(L_{1,\alpha}(F)\bks L_{1,\alpha}(\BA))$ and
$B(\pi)$ is an orthonormal basis of the space $\pi$.

Let us compute the spectral coefficient $\langle\CE^{\psi_{1,\alpha}}_{\tau\otimes\sig}(\phi),\varphi_{\pi}\rangle$.
It is clear that the integration domain $N_{1}(F)\bks N_{1}(\BA)$ is compact and $\varphi_{\pi}$ is rapidly decreasing,
so, we can switch the order of taking the residue and taking the integration, and obtain
\begin{align*}
\Res_{s=\frac{1}{2}}\CZ(s,\phi_{\tau\otimes\sig},\bar{\varphi}_{\pi},\psi_{1,\alpha})
=&\Res_{s=\frac{1}{2}}\int_{[L_{1,\alpha}]}
E^{\psi_{1,\alpha}}(\phi_{\tau\otimes\sig},s)(h)\bar{\varphi}_{\pi}(h)\ud h\\
=&\int_{[L_{1,\alpha}]}\CE^{\psi_{1,\alpha}}_{\tau\otimes\sig}(\phi)(h)\bar{\varphi}_{\pi}(h)\ud h\\
=&\langle\CE^{\psi_{1,\alpha}}_{\tau\otimes\sig}(\phi),\varphi_{\pi}\rangle.
\end{align*}
It follows that the Fourier coefficient $\CE^{\psi_{1,\alpha}}_{\tau\otimes\sig}(\phi)$ is not identically zero
if and only if the residue $\Res_{s=\frac{1}{2}}\CZ(s,\cdot)$ is not zero for some choice of $\pi$ and some choice of $\varphi_{\pi}\in\pi$.

By Part (3) of Theorem \ref{Main theorem of descent}, $\Res_{s=\frac{1}{2}}\CZ(s,\cdot)$ is zero unless taking $\pi$ such that ${\rm JL}(\pi)\cong \tau$.
Let us consider $\pi$ with ${\rm JL}(\pi)\cong \tau$.
Recall that the infinite eulerian products for $L^S(s,\tau\times\sig)$ and $L^S(s,\tau,\wedge^2)$ converge absolutely for $\Re(s)>1$ following by Jacquet and Shalika in \cite{JS81,JS90} and then are nonvanishing at $\Re(s)>1$.
Since $\CZ(s,\cdot)$ has at most a simple pole at $s=\frac{1}{2}$ and $L^S(s+\frac{1}{2},\pi\times\tau)$ has a pole at $s=\frac{1}{2}$, by Equation~\ref{eq:GPSR},
$\CZ_{v}(s,\phi_{\tau_{v}\otimes\sig_{v}},\varphi_{v},\psi_{1,\alpha})$ for $s\in S$ is holomorphic at $s=\frac{1}{2}$.
If $\Hom_{\SO^{\delta}_{2}(F_{v})}( \pi_{v},\sig_{v})\ne 0$, then there exists a choice of data $\phi_{\tau_v\otimes\sig_v}$ and  $\varphi_v$ such that the local zeta integral $\CZ(s,\dot)$ is not zero at $\Re(s)=\frac{1}{2}$.
The argument is similar to the one in Sections 6 and 7 \cite{S93}. We omit the details here.
Also a more general situation of the same nature has been discussed in \cite[Appendix A]{JZ15}.

Hence, according to Equation~\eqref{eq:GPSR}, $\Res_{s=\frac{1}{2}}\CZ(s,\cdot)$ is nonzero if and only if
\begin{enumerate}
\item[(i)] $L^{S}(s+\frac{1}{2},\pi\times \tau)$ has a pole at $s=\frac{1}{2}$;
\item[(ii)] $L(\frac{1}{2},{\rm JL}(\pi)\times\sig)\ne 0$;
\item[(iii)] $L_{1,\alpha}$ is ramified at $v$ if and only if
$\varepsilon(\frac{1}{2},{\rm JL}(\pi)_{v}\times\sig_{v},\psi_v)\eta_{E,v}(-1)=-1$.
\end{enumerate}
By the equality of $L$-functions:
$$
L^{S}(s,\pi\times\tau)=L^{S}(s,{\rm JL}(\pi)\times\tau),
$$
it is clear that
$L^{S}(s+\frac{1}{2},{\rm JL}(\pi)\times\tau)$ has a pole at $s=\frac{1}{2}$ if and only if ${\rm JL}(\pi)=\tau$. Hence
Condition~(i) holds if and only if ${\rm JL}(\pi)=\tau$.

Further, by Proposition \ref{jlz13}, $\CE_{\tau\otimes\sig} \ne 0$ implies $L(\frac{1}{2},\tau\times\sig)\ne 0$.
Hence when ${\rm JL}(\pi)=\tau$ Condition~(ii) holds if $\CE_{\tau\otimes\sig} \ne 0$. Finally,
the twisted automorphic descent $\sig_{\psi_{1,\alpha}}(\CE_{\tau\otimes\sig})$ is zero unless the group
$L_{1,\alpha}$ is determined by Condition~(iii).
In such a case, by the uniqueness of the local Bessel model, if $\sig_{\psi_{1,\alpha}}(\CE_{\tau\otimes\sig})$ is nonzero, there is a unique cuspidal automorphic representation $\pi$ such that $\Res_{s=\frac{1}{2}}\CZ(s,\cdot)$ is nonzero and then $\sig_{\psi_{1,\alpha}}(\CE_{\tau\otimes\sig})$ is irreducible.
By Proposition~\ref{thm2}, the twisted automorphic descent
$\sig_{\psi_{1,\alpha}}(\CE_{\tau\otimes\sig})$ is nonzero for some $\alpha$.
On the other hand, if $\alpha_0$ are congruent to $\alpha$ modulo ${\rm Nm}(E^{\times})$, then $L_{1,\alpha_0}$ is isomorphic to $L_{1,\alpha}$ and $\sig_{\psi_{1,\alpha}}(\CE_{\tau\otimes\sig})$ is also an automorphic representation of $L_{1,\alpha_0}(\BA)$,
which satisfies Conditions (i)--(iii).
Thus the twisted descent $\sig_{\psi_{1,\alpha_0}}(\CE_{\tau\otimes\sig})$ is nonzero.
Therefore, there is a unique $\alpha_{0}$ modulo ${\rm Nm}(E^{\times})$ such that
$\sig_{\psi_{1,\alpha}}(\CE_{\tau\otimes\sig})$ is not identically zero
and ${\rm JL}(\sig_{\psi_{1,\alpha}}(\CE_{\tau\otimes\sig}))=\tau$ by the spectral decomposition~\eqref{eq:spectral-de}.
\end{proof}

\begin{rmk}
It is clear from the above theorem that the projective quaternion group $\SO^{\delta,\alpha}_{3}$ is unique modulo
${\rm Nm}(E^{\times})$.
However, the $F$-rational orbits of the characters $\psi_{1,\alpha}$ with the stabilizer $\PD_{\delta,\alpha}^{\times}$ are in one-to-one
correspondence with the square classes
$${\rm Nm}(E^{\times})/F^{\times 2}.$$
\end{rmk}

The following theorem asserts that by choosing suitable $\delta$ and $\sig$ involved in the construction of
the twisted automorphic descents,
we are able to obtain all infinite dimensional automorphic representations of ${\rm PD}^{\times}(\BA)$, which
is the second main result for this special case. We refer to \cite{JZ15} for a treatment of the general situation.

\begin{thm}\label{thm:construct}
Let ${\rm D}$ be a quaternion algebra containing a quadratic extension $F(\sqrt{-\delta})$ of $F$.
For any given infinite dimensional irreducible automorphic representation $\pi$ of ${\rm PD}^{\times}(\BA)$,
there exists a character $\sig$ of $\SO^{\delta}_{2}(\BA)$ such that the residual representation $\CE_{{\rm JL}(\pi)\otimes\sig}$ is nonzero and
an element $\alpha\in F^\times$ such that ${\rm D}_{\delta,\alpha}\cong {\rm D}$ and $\sig_{\psi_{1,\alpha}}(\CE_{{\rm JL}(\pi)\otimes\sig})\cong\pi$.
Moreover, any such $\sig$ satisfies the period condition that $\CP_{\sig}(\pi)\neq 0$ or equivalently the $L$-function condition and the
local period condition that
\begin{equation}\label{eq:sig}
L(\frac{1}{2},{\rm JL}(\pi)\times\sig) \ne 0
\ \ \text{ and }\ \
\Hom_{\SO^{\delta}_{2}(F_{v})}(\pi_{v},\sig^{-1}_{v})\ne 0 \text{ for all }v.
\end{equation}
\end{thm}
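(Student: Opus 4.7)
The plan is to reverse-engineer the construction underlying Theorem \ref{thm:n=1}. Given an infinite-dimensional irreducible automorphic representation $\pi$ of $\PD^{\times}(\BA)$, set $\tau := \JL(\pi)$, an irreducible cuspidal automorphic representation of $\PGL_{2}(\BA)$. Since $\tau$ is self-dual with trivial central character, $L(s,\tau,\wedge^{2})$ has a simple pole at $s=1$. It remains to produce a Hecke character $\sig$ of $\SO^{\delta}_{2}(\BA) = E^{\times}\bs\BA_{E}^{\times}$ (with $E = F(\sqrt{-\delta})$) such that both $\CE_{\tau\otimes\sig} \neq 0$ and the twisted descent at the correct $\alpha$ recovers $\pi$.

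The core step, and the main obstacle, is the existence of such a $\sig$. I would invoke Waldspurger's non-vanishing theorem (\cite{W85a}) together with the Waldspurger toric period formula: for any infinite-dimensional cuspidal $\pi$ on $\PD^{\times}(\BA)$, one can find a character $\sig$ of $\SO^{\delta}_{2}(\BA)$ such that the toric period
\begin{equation*}
\CP_{\sig}(\varphi) = \int_{[\SO^{\delta}_{2}]} \varphi(x)\sig(x)\ud x
\end{equation*}
is not identically zero on $\pi$. By Waldspurger's formula combined with the Tunnell--Saito theorem, non-vanishing of $\CP_{\sig}(\pi)$ is equivalent to the simultaneous non-vanishing of $L(\frac{1}{2}, \tau\times\sig)$ and of the local functionals $\Hom_{\SO^{\delta}_{2}(F_{v})}(\pi_{v}, \sig_{v}^{-1})$ at every place $v$, i.e.\ precisely to condition \eqref{eq:sig}. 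The hypothesis that $\RD$ contains $E$ guarantees that the Tunnell--Saito sign pattern carried by $\pi$ is consistent with some global character $\sig$.

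With such a $\sig$ in hand, Proposition \ref{jlz13} gives $\CE_{\tau\otimes\sig} \neq 0$, and Theorem \ref{thm:n=1} then produces a unique coset $\alpha_{0}\cdot{\rm Nm}(E^{\times})$ for which the twisted descent $\sig_{\psi_{1,\alpha_{0}}}(\CE_{\tau\otimes\sig})$ is nonzero; it is moreover irreducible with $\JL$-image equal to $\tau$. The sign characterisation of $\alpha_{0}$ in Theorem \ref{thm:n=1}(3) coincides with the Tunnell--Saito criterion determining the ramification set of $\RD$, whence $\RD_{\delta,\alpha_{0}} \cong \RD$. Injectivity of $\JL$ on infinite-dimensional cuspidal representations of $\PD^{\times}$ then forces $\sig_{\psi_{1,\alpha_{0}}}(\CE_{\tau\otimes\sig}) \cong \pi$.

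For the \emph{Moreover} assertion, suppose $\sig$ is any character that realises $\pi$ via the twisted descent. Unfolding the global zeta integral \eqref{eq:GPSR} and tracking its residue at $s = 1/2$, one sees that the pole must arise from the Rankin--Selberg factor $L^{S}(s + \frac{1}{2}, \pi\times\tau)$, which forces $L(\frac{1}{2}, \tau\times\sig) \neq 0$, while non-vanishing of the local zeta integrals at $v \in S$ forces non-vanishing of the local Hom spaces by the uniqueness of local Bessel models, exactly as used in the proof of Theorem \ref{thm:n=1}. This yields condition \eqref{eq:sig} for every $\sig$ that produces $\pi$.
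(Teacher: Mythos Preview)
Your overall architecture matches the paper's: produce $\sig$ with $\CP_{\sig}(\pi)\neq 0$, deduce $\CE_{\JL(\pi)\otimes\sig}\neq 0$ via Proposition~\ref{jlz13}, apply Theorem~\ref{thm:n=1} to get the descent, and compare the Tunnell--Saito sign pattern for $\alpha_{0}$ with that of $\RD$ to conclude $\RD_{\delta,\alpha_{0}}\cong\RD$ and hence $\sig_{\psi_{1,\alpha_{0}}}(\CE_{\JL(\pi)\otimes\sig})\cong\pi$.

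The one place where you diverge from the paper is the existence of $\sig$. You invoke Waldspurger's non-vanishing theorem plus a sign-compatibility claim (``the hypothesis that $\RD$ contains $E$ guarantees that the Tunnell--Saito sign pattern carried by $\pi$ is consistent with some global character $\sig$''). This is correct in the end but is heavier than necessary and, as written, leaves the sign-compatibility step as an assertion. The paper instead observes that the embedding $E\hookrightarrow\RD$ makes $\SO^{\delta}_{2}(\BA)$ a subgroup of $\PD^{\times}(\BA)$; since $[\SO^{\delta}_{2}]$ is compact and $\pi$ lies in $L^{2}$, the restriction of $\pi$ to $[\SO^{\delta}_{2}]$ is semisimple and nonzero, so by spectral decomposition some character $\sig$ satisfies $\CP_{\sig}(\pi)\neq 0$. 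Waldspurger's equivalence then gives \eqref{eq:sig} a posteriori. This elementary argument both produces $\sig$ and explains transparently why the hypothesis $E\subset\RD$ is exactly what is needed; your route requires untangling global root-number parity to reach the same point.
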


\begin{proof}
Consider $\SO^{\delta}_{2}(\BA)$ as a subgroup of ${\rm PD}^{\times}(\BA)$.
Since $\SO^{\delta}_{2}(F)\bks\SO^{\delta}_{2}(\BA)$ is compact and the space $\pi$ is an $L^{2}$-space, the restriction of $\pi$ onto the subgroup $\SO^{\delta}_{2}(F)\bks\SO^{\delta}_{2}(\BA)$ is semi-simple and nonzero.
By the spectral decomposition,
we can choose a character $\sig$ of $\SO^{\delta}_{2}(\BA)$ such that $\CP_{\sig}(\pi)\neq 0$.
By \cite{W85a}, it is equivalent that $L(\frac{1}{2},\pi\times\sig)\ne 0$ and $\Hom_{\SO^{\delta}_{2}(F_{v})}(\pi_{v},\sig^{-1}_{v})\ne 0$ for all places $v$.

Since
$L(\frac{1}{2},{\rm JL}(\pi)\times\sig)=L(\frac{1}{2},\pi\times\sig)\ne 0$ and ${\rm JL}(\pi)$ is of trivial central character, the residual representation $\CE_{{\rm JL}(\pi)\otimes\sig}$ is not identically zero.
By Theorem~\ref{thm:n=1}, there exists a unique $\SO^{\delta,\alpha}_{3}$
such that ${\rm JL}(\sig_{\psi_{1,\alpha}}(\CE_{{\rm JL}(\pi)\otimes\sig}))={\rm JL}(\pi)$.

It is enough to show that $\SO^{\delta,\alpha}_{3}$ is isomorphic to $\PD^{\times}$.
By the proof of  Theorem~\ref{thm:n=1}, $\CP_{\sig}(\sig_{\psi_{1,\alpha}}(\CE_{{\rm JL}(\pi)\otimes\sig}))\ne 0$.
Referring to Condition (iii) in the proof of Theorem~\ref{thm:n=1},
both $\SO^{\delta,\alpha}_{3}$ and $\PD^{\times}$ are ramified at $v$ if and only if
$\varepsilon(\frac{1}{2},{\rm JL}(\pi)_{v}\times\sig_{v},\psi_v)\eta_{E,v}(-1)=-1$.
Then $\SO^{\delta,\alpha}_{3}\cong\PD^{\times}$
 and $\sig_{\psi_{1,\alpha}}(\CE_{{\rm JL}(\pi)\otimes\sig})=\pi$.
\end{proof}
\begin{rmk}
By uniqueness of this model, the restriction of $\pi$ onto $\SO^{\delta}_{2}(F)\bks\SO^{\delta}_{2}(\BA)$ is multiplicity-free.
Since $\pi$ is of infinite dimension and $\sig$ is of dimension one, by the spectral decomposition as in the proof of Theorem~\ref{thm:construct}, there are infinitely many choices of $\sig$ satisfying \eqref{eq:sig}.
\end{rmk}

\end{document}